\documentclass[a4paper,11pt]{amsart}

\usepackage[T1]{fontenc}
\usepackage[utf8]{inputenc}
\usepackage{lmodern}
\usepackage[a4paper,margin=1.05in,footskip=0.35in]{geometry}
\usepackage{microtype}
\usepackage{amsmath,amssymb,amsthm,mathtools}
\usepackage{mathrsfs}
\usepackage[cmyk]{xcolor}
\usepackage{enumitem}
\usepackage[colorlinks=true,linkcolor=blue,citecolor=blue,urlcolor=blue]{hyperref}

\allowdisplaybreaks
\setlist{itemsep=0.2em,topsep=0.35em}

\theoremstyle{plain}

\newtheorem{theorem}{Theorem}[section]
\newtheorem{proposition}[theorem]{Proposition}
\newtheorem{lemma}[theorem]{Lemma}
\newtheorem{definition}[theorem]{Definition}
\newtheorem{assumption}[theorem]{Assumption}
\newtheorem{corollary}[theorem]{Corollary}
\numberwithin{equation}{section}

\theoremstyle{remark}
\newtheorem{remark}[theorem]{Remark}

\newcommand{\M}{\mathcal M}

\newcommand{\NN}{\mathcal N}
\newcommand{\WW}{\mathcal W}
\newcommand{\GG}{\mathcal G}
\newcommand{\N}{\mathbb N}
\newcommand{\II}{\mathcal I}
\newcommand{\BB}{\mathcal B}
\renewcommand{\SS}{\mathcal S}
\renewcommand{\AA}{\mathcal A}
\newcommand{\R}{\mathbb R}
\newcommand{\C}{\mathbb C}
\newcommand{\tr}{\operatorname{tr}}
\title{New matrix-weighted norm inequalities and extrapolation}
\author[]{Guixiang Hong}
\address{
Institute for Advanced Study in Mathematics\\
Harbin Institute of Technology\\
Harbin
150001\\
China}
\email{gxhong@hit.edu.cn}

\author[]{Pengfei Yin}
\address{
Institute for Advanced Study in Mathematics\\
Harbin Institute of Technology\\
Harbin
150001\\
China}
\email{p.yin@stu.hit.edu.cn}

\thanks{}

\subjclass[2020]{Primary 46L52, 42B25; Secondary 46L51, 42B20}

\keywords{Matrix weights, noncommutative maximal inequalities, factorization of weights, Rubio de Francia extrapolation, dimension-free}
\begin{document}

\begin{abstract}
In this paper, we propose a new type of matrix-weighted theory and show the corresponding Rubio de Francia extrapolation theorems by means of noncommutative maximal inequalities. We introduce a new matrix $A_p$ condition following Volberg's geometric formulation, and characterize it via a noncommutative Hardy--Littlewood maximal inequality. We then establish a noncommutative analogue of Rubio de Francia's abstract extrapolation theorem for trace-finite von Neumann algebras. The argument extends to $L_\infty(\mathbb R)\bar\otimes M_d(\mathbb C)$ through pointwise matrix operations.  This yields the desired extrapolation results for individual operators and maximal families, with applications to matrix-weighted boundedness of Calder\'on--Zygmund operators.
\end{abstract}

\maketitle
\section{Introduction}\label{sec:intro}

Weighted norm inequalities are a central part of modern
harmonic analysis.  On the real line, we say a function $w\colon \R\to \R_+$ is a scalar weight if it is positive, measurable and locally integrable. Given $1\leq p<\infty$ and a weight $w$, the weighted $L_p$-norm is
\[
    \| f\|_{L_p(w)}:=\Big(\int_\R |f(t)|^pw(t)d t\Big)^{1/p}.
\]
For $1<p<\infty$, the Muckenhoupt class $A_p(\R)$ is characterized by
\[
 [w]_{A_p}
 =\sup_{I\subset\R}\langle w\rangle_I
       \langle w^{-p'/p}\rangle_I^{p/p'}<\infty,
 \qquad
 \langle w\rangle_I=\frac1{|I|}\int_Iw(t)\,dt,
\]
where the supremum is taken over bounded intervals. Muckenhoupt's theorem in \cite{Mucken1972} identifies this condition with the boundedness of the Hardy--Littlewood maximal operator 
\[
    Mf(t):=\sup_{I\ni t}\frac{1}{|I|}\int_I |f(s)|ds
\] 
on $L_p(w)$.  The same class also governs the weighted boundedness of the
Hilbert transform
\[
    Hf(t):=\mathrm{p.v.}\frac{1}{\pi}\int_\R \frac{f(s)}{t-s}ds
\] 
and, more generally, of Calder\'on--Zygmund
operators \cite{HMW1973,CoiFeffer1974}.

The extrapolation of Rubio de Francia in \cite{Rubio1984} explains why weighted norm inequalities persist
between exponents.  In one of its standard forms, if a sublinear operator $S$ satisfies
\[
 \|Sf\|_{L_r(w)}
 \le \Phi([w]_{A_r})\|f\|_{L_r(w)}
\]
for all $w\in A_r(\R)$ at one exponent $1<r<\infty$, then corresponding
estimates hold at every $1<p<\infty$ and $w\in A_p(\R)$.
The proof is based on the Rubio de Francia
iteration algorithm, which exploits the Hardy-Littlewood maximal operator to build a majorant with controlled $A_p$ characteristics.

Matrix-weighted theory differs substantially from its scalar counterpart. We say $w:\R\to M_d(\C)$ is a matrix weight if it is measurable, positive definite almost everywhere,
and locally integrable. Given $f\colon \R \to \ell_2^d$, the usual Hilbert-valued realization is
\[
 \|f\|_{L_p(w;\ell_2^d)}
 =\left(\int_\R
    \|w(t)^{1/p}f(t)\|_{\ell_2^d}^{p}\,dt\right)^{1/p}.
\]
For $d=1$, this reduces to the scalar weighted norm. For $d>1$, Treil and Volberg proved the matrix $A_2$ characterization of the Hilbert
transform \cite{TreilVolberg1997} but with the sharp $A_2$-bound different from the classical one \cite{NPTV2017,Domelevo2024}, and Volberg's geometric formulation of
$A_p(\R;M_d(\C))$ describes matrix weights through averaged norm duality
\cite{Volberg1997}. Further characterizations and maximal function approaches were developed by Nazarov-Treil, Christ-Goldberg and Goldberg in
\cite{NazarovTreil1996,ChristGoldberg2001,Goldberg2003}.
In particular, Goldberg's Hilbert-valued maximal theorem can be stated as follows: the maximal operator
\begin{equation}\label{equ:goldberg maximal}
    M_wf(t):=\sup_{I\ni t}\frac{1}{|I|}\int_I \| w^{1/p}(t)w^{-1/p}(s)f(s)\|_{\ell_2^d}ds
\end{equation}
is bounded from $L_p(\R;\ell_2^d)$ to $L_p(\R)$ precisely for matrix $A_p(\R;M_d(\C))$ weights, with constants depending on the corresponding matrix $A_p$ characteristic \cite{IsraMoen2019}. These results provide a robust matrix-weighted theory, but the scalar Rubio
de Francia algorithm does not transfer directly.

The obstruction is simple to state. In the scalar theory, Rubio de Francia’s algorithm
repeatedly applies a positive maximal operator and uses pointwise suprema. In the matrix-valued setting there is no canonical vector-valued supremum with the same input and output
structure. Goldberg’s maximal function, for instance, characterizes matrix $A_p$ weights, but it
maps vector-valued functions to scalar-valued functions. This loss of the original output space
prevents a direct iteration argument.
One recent way around this obstruction was proposed by Bownik and Cruz-Uribe, who
use convex set-valued analysis and a corresponding convex set-valued maximal function to
formulate matrix-weighted extrapolation \cite{BownikCruzUribe2025}. This approach preserves an input-output structure
at the level of convex bodies.

\bigskip

In the present paper, we try to address this problem from the viewpoint of noncommutative analysis. 
More precisely, the operator space $L_p(\M;\ell_\infty)$ playing the role of the space of families of functions with the related maximal function in $L_p$ motivates us to search for an extrapolation argument in the noncommutative setting.
The definitions of noncommutative $L_p$-spaces and  $L_p(\M;\ell_\infty)$ will be recalled in Section~\ref{sec:preliminary}. The latter is one of the basic vector-valued noncommutative $L_p$-spaces introduced by Pisier \cite{Pisier1998}  and developed by Junge in his proof of the noncommutative Doob
maximal inequality \cite{Junge2002}. It has since become an essential tool in noncommutative martingale and ergodic theory, including the noncommutative Dunford-Schwartz maximal ergodic theorem of Junge and Xu \cite{JungeXu2007} and the noncommutative Hardy-Littlewood maximal inequality by Mei \cite{Mei2007}. More recently, noncommutative maximal norms have been used to prove maximal ergodic inequalities under doubling conditions \cite{HongLiaoWang2021} and pointwise convergence results for noncommutative Fourier series \cite{HongWangWang2024}.

\subsection{The noncommutative weighted spaces}
For any von Neumann algebra $\M$ and $0<p<\infty$, there is an intrinsic Haagerup $L_p$ space associated with $\M$ \cite{Haagerup1979}. In the semifinite tracial setting,  this construction is compatible with the usual noncommutative $L_p$ spaces and with Kosaki's interpolation approach \cite{Kosaki1984,PisierXu2003}. The symmetric realization has proved particularly useful in reduction arguments for noncommutative $L_p$ spaces \cite{HaagerupJungeXu2010}.

Motivated by this point of view, let $(\M,\tau)$ be a von Neumann algebra of finite trace, by which we mean that $\tau(1)<\infty$. For $0<p<\infty$, and a full-support $w\in L_0^+(\M)$, one defines the symmetric weighted realization
\[
 \|x\|_{L_p(\M,w)}
 =\|w^{1/2p}xw^{1/2p}\|_{L_p(\M)}.
\]
We restrict to trace-finite von Neumann algebras so that full-support positive weights, their inverses, and the products all belong to the same algebra of measurable operators, and thus we may define the weighted space $L_p(\mathcal M,w)$ as  a subset of $L_0(\mathcal M)$. In the general semifinite setting, all this is no longer automatic, and we have no idea how to define a suitable class of weights stable under the various operations.

But an exceptional case that we will take care of in this paper is
\[
 \NN=L_\infty(\R)\bar\otimes M_d(\C),
\]
with the tensor trace that is semifinite.  A matrix weight is, throughout this
paper, a strongly measurable, almost everywhere positive-definite, and
locally integrable function $w:\R\to M_d(\C)$.  On this algebra all
weighted products can be understood pointwise in the matrix fibers.  Thus one may well define
\begin{equation}\label{eq:intro-global-weighted-space}
 \|f\|_{L_p(\NN,w)}
 =\left(\int_\R
   \|w(t)^{1/2p}f(t)w(t)^{1/2p}\|_{S_p^d}^{p}\,dt\right)^{1/p},
\end{equation}
and the resulting weighted space $L_p(\mathcal N,w)$ as a subset of strongly measurable functions. This two-sided space differs from the classical Hilbert-valued
matrix-weighted space above.

\subsection{The maximal inequality characterization}
Our first result identifies the correct matrix $A_p$ class for the validity of a noncommutative Hardy-Littlewood maximal inequality. The definition of
$A_p(\NN)$ used below is given in Section \ref{sec:new Ap}, which is inspired by  Volberg’s averaged-norm formulation:
the matrix weight $w$ induces pointwise norms on $M_d(\C)$, and $A_p(\NN)$ is expressed as a uniform
comparison between averaged dual norms.

Let $\II$ be the family of bounded intervals in $\R$ and for each $I\in \II$, define the averaging operator
\[
 A_If=\left(\frac1{|I|}\int_If(s)\,ds\right)\chi_I.
\]
The weighted noncommutative maximal norm is defined as follows
\[
        \| (A_If)_{I\in\II}\|_{L_p(\NN,w;\ell_\infty)}:=\big\| {\sup_{I\in \II}}^+ A_If\big\|_{L_p(\NN,w)}:=\big\| {\sup_{I\in\II}}^+ w^{1/2p}(A_If)w^{1/2p}\big\|_{L_p(\NN)}.
\]
Let $\SS_\NN^c$ be the subset of bounded operators with compact support on the real line.
Our first main result is a maximal characterization of our new matrix weight class $A_p(\NN)$.
{\begin{theorem}\label{thm:intro-maximal}
Let $1<p<\infty$ and $w$ be a matrix weight.  Then $w\in A_p(\NN)$ if and only if
\begin{equation}\label{equ:maxiaml-intro}
    \big\| {\sup_{I\in\II}}^+ A_If\big\|_{L_p(\NN,w)}\leq C\| f\|_{L_p(\NN,w)},\qquad f\in \SS_\NN^c.
\end{equation}
Moreover, the optimal constant in \eqref{equ:maxiaml-intro} denoted by $C_{p,d}(w)$ satisfies
\begin{equation}\label{eq:intro-maximal-quantitative}
 [w]_{A_p(\NN)}^{1/p}
 \le C_{p,d}(w)
 \le \WW_{p,d}[w]_{A_p(\NN)}^{p'/p},
\end{equation}
with the least upper bound $\WW_{p,d}\leq d^{\frac{1}{p}+(1+p')|\frac{1}{p}-\frac{1}{2}|}\mathcal{G}_{p,d^2}$ where $\mathcal{G}_{p,d^2}$ is the constant from Goldberg's maximal inequality \eqref{equ:Goldberg quantitative}.
\end{theorem}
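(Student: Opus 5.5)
Since the precise definition of $A_p(\NN)$ is deferred to Section~\ref{sec:new Ap}, I will work with the following expected form. The weight $w$ induces on $M_d(\C)$ the fiber norms $\rho_t(x)=\|w(t)^{1/2p}xw(t)^{1/2p}\|_{S_p^d}$. By trace duality, $\rho_t^*(y)=\|w(t)^{-1/2p}yw(t)^{-1/2p}\|_{S_{p'}^d}$. I expect $[w]_{A_p(\NN)}$ to be the least constant comparing the averaged norm $\langle \rho\rangle_{I,p}(x)=(\frac1{|I|}\int_I\rho_t(x)^p\,dt)^{1/p}$ with the dual of the averaged dual norm $\langle\rho^*\rangle_{I,p'}$, uniformly in $I$. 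This is Volberg's formulation, transplanted to the matrix space $M_d(\C)\cong\ell_2^{d^2}$.

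\textbf{Lower bound.} Fix an interval $I$ and test \eqref{equ:maxiaml-intro} on $f=g\chi_I$, where $g$ is matrix-valued and supported in $I$. Since ${\sup}^+_J A_Jf\ge A_If$ in the $L_p(\NN;\ell_\infty)$ sense, we get
\[
\|w^{1/2p}(A_If)w^{1/2p}\|_{L_p(\NN)}\le C\|f\|_{L_p(\NN,w)}.
\]
The left side equals $|I|^{1/p}\langle\rho\rangle_{I,p}(\langle g\rangle_I)$. Now fix $x$ and take the supremum over $g$ with $\langle g\rangle_I=x$. Duality in the weighted space, pairing against $y$ with the norm $\rho_t^*$, shows that the infimum of $\|g\chi_I\|_{L_p(\NN,w)}$ over such $g$ is the dual of the $L_{p'}$-average of $\rho^*$, evaluated at $x$. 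This should give $\langle\rho\rangle_{I,p}\le C\,(\langle\rho^*\rangle_{I,p'})^*$, and hence $[w]^{1/p}\le C$ once the power in the definition is accounted for. This part is routine.

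\textbf{Upper bound.} The plan is to reduce to Goldberg's theorem on $\ell_2^{d^2}$. First, identify $M_d(\C)$ with $S_2^d\cong\ell_2^{d^2}$ and define the $d^2\times d^2$ positive matrix weight $W(t)$ by $W(t)^{1/p}x=w(t)^{1/2p}xw(t)^{1/2p}$, which is left–right multiplication. Comparing $S_p^d$ with $S_2^d$ costs at most $d^{|1/p-1/2|}$ on each side. Consequently $[W]_{A_p(\R;M_{d^2})}$ is controlled by $[w]_{A_p(\NN)}$ up to a factor $d^{(1+p')|1/p-1/2|}$ raised to suitable powers. Second, control the noncommutative maximal norm by a scalar one. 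For a positive $f$ we have $w^{1/2p}A_Ifw^{1/2p}\le$ something, but positivity is not preserved by the conjugation between different points $t,s$, so I would not argue through order. Instead I would use the crude estimate $\|{\sup}^+_I x_I\|_{L_p(\NN)}\le\|\sup_I\|x_I(t)\|_{M_d}\|_{L_p(\R)}\cdot d^{1/p}$. This holds because $\|x_I(t)\|\le F(t)$ gives $x_I\le F\cdot 1$ after splitting $x_I$ into four positive parts (absorbed into the constant), and $\|F1_d\|_{L_p(\NN)}=d^{1/p}\|F\|_{L_p}$. Third, bound
\[
\|w(t)^{1/2p}A_If(t)w(t)^{1/2p}\|\le\frac1{|I|}\int_I\|W(t)^{1/p}W(s)^{-1/p}(W(s)^{1/p}f(s))\|_{\ell_2^{d^2}}\,ds,
\]
which is exactly Goldberg's $M_W$ applied to $W^{1/p}f$. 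Goldberg's inequality \eqref{equ:Goldberg quantitative} then gives $\mathcal G_{p,d^2}[W]^{p'/p}\|W^{1/p}f\|_{L_p(\ell_2^{d^2})}$. Finally, convert the Hilbert–Schmidt norm back to $S_p$, paying another factor $d^{|1/p-1/2|}$.

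\textbf{Main obstacle.} The delicate part is the second step. A general self-adjoint family requires the decomposition $x=\sum_k i^k x_k$ with $x_k\ge0$, and the maximal operators do not commute with this decomposition after weighting. I would handle it by first reducing to self-adjoint $f$, where $A_If$ and its weighted versions are self-adjoint and $-F\le x_I\le F$ gives the ${\sup}^+$ bound directly. General $f$ then follows by splitting into real and imaginary parts. The remaining work is to track the exponents so that the total constant is $d^{\frac1p+(1+p')|\frac1p-\frac12|}\mathcal G_{p,d^2}$.
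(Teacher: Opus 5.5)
Your overall plan matches the paper's. The lower bound is the single-average duality of Lemma~\ref{lem:one-average}; your quotient-norm computation is the adjoint computation of $P_I$ there. The upper bound runs through the weight $\mathbb W$ on $S_2^d$, the comparison of characteristics in Lemma~\ref{lem:compare N and ell_2^d2}, pointwise domination by Goldberg's maximal function, and a $d^{1/p}$ comparison between noncommutative and scalar maximal norms.

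\textbf{The gap.} It sits in the step you single out as the main obstacle, and it matters because the statement asserts an explicit constant. You justify $\|{\sup_I}^+x_I\|_p\le d^{1/p}\|\sup_I\|x_I(\cdot)\|_{S_\infty^d}\|_{L_p(\R)}$ in one of two ways. Splitting every $x_I$ into four positive parts actually yields the constant $4d^{1/p}$. Treating self-adjoint $f$ first and then writing $f=\Re f+i\Im f$ yields $\|{\sup_I}^+A_If\|_{L_p(\NN,w)}\le K(\|\Re f\|_{L_p(\NN,w)}+\|\Im f\|_{L_p(\NN,w)})\le 2K\|f\|_{L_p(\NN,w)}$. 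Either way an absolute factor $4$ or $2$ enters that no bookkeeping of powers of $d$ removes. So the argument as proposed does not give $\WW_{p,d}\le d^{\frac1p+(1+p')|\frac1p-\frac12|}\GG_{p,d^2}$; for $p\ge 2$, or for $d=1$, it gives at best twice that.

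\textbf{The fix.} The obstacle is not there. By \eqref{eq:prelim-Lp-linfty}, the norm of $L_p(\NN;\ell_\infty)$ is an infimum over common factorizations $x_I=ay_Ib$ with arbitrary bounded $y_I$. Neither positivity nor self-adjointness is required; the order description \eqref{eq:prelim-positive-majorant} is merely a consequence for positive families. For finite $\mathcal F\subset\II$, put $F(t)=\max_{I\in\mathcal F}\|x_I(t)\|_{S_\infty^d}$ and write $x_I=(F^{1/2}1_d)(F^{-1}x_I)(F^{1/2}1_d)$, with the middle factor set to $0$ where $F=0$. The middle factors are contractions in $\NN$, and $\|F^{1/2}1_d\|_{2p}^2=d^{1/p}\|F\|_{L_p(\R)}$. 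So the comparison holds with constant exactly $d^{1/p}$ for the complex family $x_I=w^{1/2p}(A_If)w^{1/2p}$, and nothing needs to commute with the weighting. Then \eqref{eq:prelim-finite-subfamilies} passes from finite $\mathcal F$ to $\II$. This is Lemma~\ref{lem:positive-max-to-classical}, stated there with $S_p^d$ norms, which the same factorization covers.

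With this replacement your proof is complete. Because you measure $x_I(t)$ in the operator norm, the pointwise passage to the Hilbert--Schmidt norm costs nothing; the paper works with $S_p^d$ norms at that step and pays $d^{\max\{0,\frac1p-\frac12\}}$. Converting back to $S_p^d$ at the end needs only a factor $d^{\max\{0,\frac12-\frac1p\}}$. You would then get the exponent $\frac1p+\max\{0,\frac12-\frac1p\}+p'|\frac1p-\frac12|$, which never exceeds the stated one.
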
}

The {\it if} part is easy by direct computation. For the {\it only if} part, the first observation is that one may relate $w$ to a Volberg matrix weight $\mathbb W$ acting on the Hilbert space $S_2^d$ in the following way
\[
 \mathbb W(t)(A)=w(t)^{1/2}Aw(t)^{1/2};
\]
see Lemma \ref{lem:compare N and ell_2^d2}. The second observation is that the noncommutative maximal norm is comparable to the vector-valued maximal norm for finite-dimensional matrix-valued functions; see Lemma \ref{lem:positive-max-to-classical}.
The desired noncommutative weighted
maximal inequality then follows from the argument of 
\cite{IsraMoen2019}.

\subsection{The extrapolation theorem}
Our second result is a matrix-valued version of Rubio de Francia's extrapolation theorem. It is formulated in the framework of noncommutative analysis.
\begin{theorem}\label{thm:intro-extrapolation}
Let
$(T_\alpha)_{\alpha\in\AA}$ be linear maps from $\SS_\NN^c$ to
the strongly measurable $M_d(\C)$-valued functions. Suppose, for some $1<r<\infty$ and an increasing function
$\Phi$, one has
\begin{equation}\label{eq:matrix-maximal-extrapolation-hypothesis}
 \big\|{\sup_{\alpha\in\AA}}^+T_\alpha(f)\big\|_{L_r(\NN,w)}
 \le \Phi([w]_{A_r(\NN)})\|f\|_{L_r(\NN,w)}
\end{equation}
for every $w\in A_r(\NN)$ and $f\in \SS_\NN^c$.  Then, for every
$1<p<\infty$, there exists another increasing function $\Phi_{p,r,d}$ only depending on $p,r,d,\Phi$ such that for any $w\in A_p(\NN)$, the map
$f\mapsto(T_\alpha(f))_{\alpha\in\AA}$ extends uniquely to a bounded map
from $L_p(\NN,w)$ to $L_p(\NN,w;\ell_\infty(\AA))$, and
\begin{equation}\label{eq:matrix-maximal-extrapolation-conclusion}
 \big\|{\sup_{\alpha\in\AA}}^+T_\alpha(f)\big\|_{L_p(\NN,w)}
 \le \Phi_{p,r,d}([w]_{A_p(\NN)})\|f\|_{L_p(\NN,w)}.
\end{equation}
An explicit choice of $\Phi_{p,r,d}$ is given in \eqref{eq:matrix-Phi-prime}.
\end{theorem}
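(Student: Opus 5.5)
We follow the scalar Rubio de Francia scheme, transplanted to the two-sided noncommutative setting. The maximal inequality of Theorem~\ref{thm:intro-maximal} supplies the iteration algorithm. Write $M f=\sup^+_{I}A_I f$ in the sense of the noncommutative maximal norm. Since $A_I$ is positive and linear, for positive $f$ we can use Junge's factorization: there is a positive $F$ with $A_If\le F$ for all $I$ and $\|F\|_{L_p(\NN,w)}\lesssim C_{p,d}(w)\|f\|_{L_p(\NN,w)}$. We make the choice of $F$ quasi-canonical, with uniform constants, by taking a near-optimal majorant. This yields a positivity-preserving map $f\mapsto \mathcal M f$ with $A_If\le \mathcal Mf$ and norm at most $2C_{p,d}(w)$, which replaces the pointwise supremum. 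Although $\mathcal M$ is not sublinear, all we need is positivity and homogeneity: we apply it iteratively to a fixed positive element.

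\textbf{Step 1: duality and factorization of the weight norm.} Using the symmetric realization, $\|x\|_{L_p(\NN,w)}=\|w^{1/2p}xw^{1/2p}\|_{L_p}$. By H\"older and trace duality, the $L_r(\NN,v)$-norm of $\sup^+_\alpha T_\alpha f$ can be compared to the $L_p(\NN,w)$-norm through an auxiliary weight $v$ of the form
\[
v = w^{1/2}\,h\,w^{1/2} \quad\text{or}\quad v^{\pm}= (\text{majorant})^{r-p}\text{-type combinations}.
\]
The plan has two cases. In the case $p<r$, we take $h=R(\cdot)$, the Rubio de Francia majorant built from the element $|f|$ (using the polar decomposition or the positive parts of the real and imaginary parts) via
\[
R g=\sum_{k\ge0}\frac{\mathcal M^k g}{(2\|\mathcal M\|)^k}.
\]
In the case $p>r$, we dualize against a test element $g\in L_{(p/r)'}$ and construct $Rg$ using the maximal operator for the dual weight $w^{1-p'}$. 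Theorem~\ref{thm:intro-maximal} also applies there, because $A_p(\NN)$ should be self-dual under $w\mapsto w^{-p'/p}$; we would need to check this from Volberg's formulation.

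\textbf{Step 2: verifying $v\in A_r(\NN)$ with controlled characteristic.} This is the main obstacle. In the scalar theory, $Rg\in A_1$ together with factorization gives $v\in A_r$. Here the matrix $A_r(\NN)$ condition is stated by comparing averaged dual norms, and a product such as $w^{1/2}(Rg)^{\alpha}w^{1/2}$ need not be positive in any useful ordering, because the factors do not commute. Our first attempt is to avoid this by working in the Volberg picture on $S_2^d$ via Lemma~\ref{lem:compare N and ell_2^d2}. There, $A_r(\NN)$ reduces to the reducing-operator estimate $\|\mathcal W_I\,\mathcal W_I'\|\lesssim1$. The $A_1$-type inequality $A_I(Rg)\le 2\|\mathcal M\|Rg$ is an operator inequality, and we would convert it into reducing-operator bounds by operator monotonicity of $t\mapsto t^{\theta}$ for $0<\theta\le1$ (L\"owner--Heinz). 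We expect to pay dimensional constants $d^{c}$ when passing between norms, which is why $\Phi_{p,r,d}$ depends on $d$.

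\textbf{Step 3: concluding and extending.} Apply the hypothesis \eqref{eq:matrix-maximal-extrapolation-hypothesis} with the weight $v$ and undo the H\"older step, using $f\le Rf$ and $\|Rf\|_{L_p(\NN,w)}\le2\|f\|_{L_p(\NN,w)}$. This gives \eqref{eq:matrix-maximal-extrapolation-conclusion} for $f\in\SS_\NN^c$, with $\Phi_{p,r,d}$ composed of $\Phi$, powers of $\WW_{p,d}[w]^{p'/p}$, and $d$-factors. The estimate requires $f$ positive, so for general $f$ we split it into four positive parts, at the cost of a factor $4$. Finally, $\SS_\NN^c$ is dense in $L_p(\NN,w)$ because $w,w^{-1}$ are locally integrable matrix weights, and $L_p(\NN,w;\ell_\infty(\AA))$ is complete. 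Together these give the unique bounded extension.
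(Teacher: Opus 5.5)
\textbf{There is a genuine gap, and it sits exactly where you place ``the main obstacle''.} Step~2 is a hope rather than an argument, and Step~1 never fixes which weight is fed into the hypothesis. In the scalar proof the new weight is certified to lie in $A_r$ by Jones' reverse factorization, that is, by H\"older's inequality applied to products such as $w\,(Rf)^{p-r}$; this needs commuting factors. Here the natural candidate has the form $|v^{-1/2}w^{1/2p}|^{2r}=(w^{1/2p}v^{-1}w^{1/2p})^{r}$, with $v$ a fractional power of $w^{1/2p}(Rf)w^{1/2p}$. Checking the averaged-dual-norm condition of Definition~\ref{def:matrix-Ap-global-local} for such a product is a noncommutative reverse-factorization problem.
\begin{itemize}
\item L\"owner--Heinz only gives monotonicity of $t\mapsto t^\theta$, so it provides no substitute for the H\"older step.
\item There is no evident way to turn the $d\times d$ inequality $\langle Rf\rangle_I\le C\,Rf(t)$ into reducing-operator bounds for the $d^2\times d^2$ Volberg weight attached to such a product, because $R\mapsto R^{1/2}(\cdot)R^{1/2}$ does not commute with averaging.
\end{itemize}
So nothing in your plan shows that the new weight lies in $A_r(\NN)$, or is even locally integrable, and \eqref{eq:matrix-maximal-extrapolation-hypothesis} cannot yet be invoked.

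\textbf{The missing idea is to avoid the geometric condition altogether.} By Theorem~\ref{thm:intro-maximal} (or Lemma~\ref{lem:one-average}) it suffices to prove the \emph{maximal inequality} for the new weight, and the paper does this by complex interpolation. Set $S_I(x)=w^{1/2p}A_I(w^{-1/2p}xw^{-1/2p})w^{1/2p}$.
\begin{itemize}
\item Build the Rubio de Francia majorant $v_0\ge u^{s/p}$ with $S_I(v_0)\le 4Cv_0$. Then $x\mapsto v_0^{-1/2}S_I(v_0^{1/2}xv_0^{1/2})v_0^{-1/2}$ is bounded on $L_\infty(\NN;\ell_\infty)$.
\item Apply Stein interpolation to the spectrally truncated analytic family $z\mapsto v_0^{-z/2}S_I(v_0^{z/2}xv_0^{z/2})v_0^{-z/2}$. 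Interpolate between $\Re z=0$, where the $L_p$ maximal bound comes from $w\in A_p(\NN)$, and $\Re z=1$.
\item At $\theta=1-p/r$, Lemma~\ref{lem:tech} identifies the result as exactly the maximal inequality for $|v^{-1/2}w^{1/2p}|^{2r}$ with $v=v_0^\theta$.
\end{itemize}
This interpolation is what replaces H\"older. The case $r<p$ then follows from the duality in Proposition~\ref{prop:matrix-Ap-dual}.

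\textbf{Two smaller points also need repair.}
\begin{itemize}
\item The concluding step has to go through Lemma~\ref{lem:N-testing}. Here ${\sup}^+$ denotes a norm, not an element that could be raised to a power and paired with a test function. Part~(ii) rests on $L_{p'}(\ell_1)$--$L_p(\ell_\infty)$ duality, and part~(i) handles non-positive elements directly.
\item Your reduction to positive $f$ does not work as stated. If you pull back the positive parts of $w^{1/2p}fw^{1/2p}$, you generally leave $\SS_\NN^c$, where the hypothesis is assumed. If you split $f$ itself, the parts are not controlled in $L_p(\NN,w)$. For constant $w=\mathrm{diag}(1,\epsilon)$ and $f=(e_{12}+e_{21})\chi_{(0,1)}$, one gets $\|f_+\|_{L_p(\NN,w)}/\|f\|_{L_p(\NN,w)}\sim\epsilon^{-1/(2p)}$.
\end{itemize}
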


In Theorem~\ref{thm:intro-extrapolation}, letting $T_{\alpha}\equiv T$, the resulting maximal norm is
exactly $\|T(f)\|_{L_p(\NN,w)}$. Then we have the following extrapolation theorem for a single linear operator.
\begin{corollary}\label{cor:single-operator-extrapolation}
Let $T$ be a linear map from $\SS_\NN^c$ to the strongly measurable
$M_d(\C)$-valued functions. Suppose, for some $1<r<\infty$ and an increasing function
$\Phi$, one has
\[
 \|T(f)\|_{L_r(\NN,w)}
 \le\Phi([w]_{A_r(\NN)})\|f\|_{L_r(\NN,w)}
\]
for every $w\in A_r(\NN)$ and $f\in \SS_\NN^c$. Then, for every
$1<p<\infty$, $w\in A_p(\NN)$, the map
$T$ extends uniquely to a bounded map
from $L_p(\NN,w)$ to $L_p(\NN,w)$, and
\[
 \|T(f)\|_{L_p(\NN,w)}
 \le\Phi_{p,r,d}([w]_{A_p(\NN)})\|f\|_{L_p(\NN,w)},
\]
where the increasing function $\Phi_{p,r,d}$ is the same as the one in Theorem \ref{thm:intro-extrapolation}.
\end{corollary}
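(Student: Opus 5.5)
This corollary is a direct specialization of Theorem~\ref{thm:intro-extrapolation}. I will take the index set $\AA$ to be a singleton (or any nonempty set) and put $T_\alpha\equiv T$ for every $\alpha\in\AA$. The proof then has two steps: first check that the hypothesis of the theorem follows from the hypothesis of the corollary, and then read the conclusion back.

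The key observation is that for a constant family $(x_\alpha)_{\alpha\in\AA}$ with $x_\alpha=x$, the noncommutative maximal norm equals the norm of $x$. That is,
\[
\big\|{\sup_{\alpha\in\AA}}^+ x\big\|_{L_p(\NN,w)}=\|x\|_{L_p(\NN,w)} .
\]
I will verify this from the definition of $L_p(\M;\ell_\infty)$ recalled in Section~\ref{sec:preliminary}, where the norm is $\inf\|a\|_{2p}\sup_\alpha\|y_\alpha\|_\infty\|b\|_{2p}$ over factorizations $x_\alpha=ay_\alpha b$.
\begin{itemize}
\item For the upper bound, use the factorization $x=a\,u\,b$ coming from the polar decomposition with $a=|x^*|^{1/2}$, $b=|x|^{1/2}$ and $u$ a contraction. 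This gives ``$\le$'' by Hölder.
\item For the lower bound, any factorization $x=ayb$ gives $\|x\|_p\le\|a\|_{2p}\|y\|_\infty\|b\|_{2p}$, again by Hölder. This gives ``$\ge$''.
\end{itemize}
The identity is applied to $x=w^{1/2p}T(f)w^{1/2p}$ in $L_p(\NN)$ for the weighted version. Since the weighted maximal norm is defined as the unweighted maximal norm of the conjugated family, no extra weight issues arise. The same identity holds for the family indexed by $\AA$ of any cardinality, since the supremum of a constant family is trivial.

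Given the identity, the hypothesis of the corollary at exponent $r$ is exactly \eqref{eq:matrix-maximal-extrapolation-hypothesis} for the constant family. Theorem~\ref{thm:intro-extrapolation} then yields, for each $1<p<\infty$ and $w\in A_p(\NN)$, that the map $f\mapsto (T(f))_\alpha$ extends uniquely to a bounded map into $L_p(\NN,w;\ell_\infty(\AA))$ with bound $\Phi_{p,r,d}([w]_{A_p(\NN)})$. Applying the identity again on $\SS_\NN^c$ turns this into the single-operator bound $\|T(f)\|_{L_p(\NN,w)}\le \Phi_{p,r,d}([w]_{A_p(\NN)})\|f\|_{L_p(\NN,w)}$.

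It remains to get the unique extension of $T$ itself into $L_p(\NN,w)$. For this I will need density of $\SS_\NN^c$ in $L_p(\NN,w)$, which the paper uses in the proof of Theorem~\ref{thm:intro-extrapolation}. Extension by continuity then gives a unique bounded extension of $T$ with the same constant. This extension agrees with the diagonal embedding of the theorem's extension, because the identity is an isometry of $L_p(\NN,w)$ onto the constant families.

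The only point needing any care is the isometry identity for constant families, and even that is a routine polar-decomposition argument. I expect no genuine obstacle.
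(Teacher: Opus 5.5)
Your proposal is correct and follows the paper's route. The paper also obtains the corollary by taking $T_\alpha\equiv T$ in Theorem~\ref{thm:intro-extrapolation} and observing that the maximal norm of a constant family is exactly $\|T(f)\|_{L_p(\NN,w)}$. Your polar-factorization argument $x=|x^*|^{1/2}u|x|^{1/2}$, combined with H\"older, simply makes explicit the identity the paper takes for granted.
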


The key ingredient in the proof of Theorem \ref{thm:intro-extrapolation} is a noncommutative analogue of Rubio de Francia's abstract extrapolation theorem for trace-finite von Neumann algebras. More precisely, let $(E_\beta)_\beta$ be positive linear maps on a trace-finite von Neumann algebra $(\mathcal M,\tau)$; one then defines the weight class ${W_p((E_\beta)_\beta)}$ through a maximal inequality in terms of $(E_\beta)_\beta$ acting on $L_p(\mathcal M,w)$; see Definition \ref{def:weight-classes}. Under the quantitative duality Assumption \ref{ass:symmetric}, we obtain first a factorization theorem for the weights---Theorem
\ref{thm:factorization-both-sides}, and then an extrapolation result for a family of linear maps---Theorem \ref{thm:extra-abstract}. This is exactly a noncommutative analogue of Rubio de Francia's celebrated extrapolation theory \cite{Rubio1984}. 
However, because of the noncommuting products and the absence of a pointwise maximal function, the proof requires new techniques in the noncommutative setting. For instance, we identify the dense subsets of the weighted space $L_p(\mathcal M,w)$ in Lemma \ref{lem:density of M}, reformulate the condition $|\mathbf w|^{2p}\in {W_p((E_\beta)_\beta)}$ by a maximal inequality involving $(\mathbf w,\mathbf w^*)$ in Lemma \ref{lem:tech}, and establish the H\"older type testing estimates for noncommutative maximal norms in Lemmas \ref{lem:maximal-testing} and \ref{lem:compare-sandwich} etc.

With slight modifications of the arguments, a similar strategy in Section \ref{sec:abstract Rubio} also works for matrix-weighted theory associated to the family $(A_I)_{I\in\mathcal I}$. Indeed, by Theorem  \ref{thm:intro-maximal}, the weight class $W_p((A_I)_I)$ has a geometric characterization, that is, it is identical with $A_p(\mathcal N)$. From that, it is trivial that the duality Assumption \ref{ass:symmetric} is satisfied with a constant depending on the dimension $d$ of the underlying matrix. We point out the necessary modification adapted to the semifinite algebra $(\mathcal N, \int\otimes tr)$, and prove Theorem \ref{thm:intro-extrapolation} in Section \ref{sec:extrapolation on N}.

As an application, we establish weighted maximal inequalities for truncations of Calder\'on-Zygmund operators. A weighted Cotlar inequality and the matrix \(A_2\) theorem give the \(L_2\) estimate. Extrapolation then yields boundedness from \(L_p(\mathcal N,w)\) into \(L_p(\mathcal N,w;\ell_\infty)\) for every \(1<p<\infty\) and \(w\in A_p(\mathcal N)\).

\subsection{Organization of the paper}

Section~\ref{sec:preliminary} recalls the semifinite scalar and vector-valued noncommutative $L_p$ tools and the weighted realizations used in the trace-finite and semicommutative settings.  Section~\ref{sec:new Ap} introduces the geometric matrix $A_p$ class and proves its quantitative maximal characterization. Section~\ref{sec:abstract Rubio} establishes the abstract factorization and extrapolation theorems for trace-finite algebras.  Section~\ref{sec:extrapolation on N} verifies the structural assumptions for interval averages, transfers the factorization to $\NN$, proves Theorem~\ref{thm:intro-extrapolation} with explicit quantitative control, and presents an application to the Calder\'on--Zygmund operators. Finally, Section \ref{sec:problem} presents several open problems concerning extensions of our extrapolation framework and the quantitative bounds in our results.

\section{Preliminaries on tracial von Neumann algebras}\label{sec:preliminary}

This section fixes our conventions for noncommutative integration and collects the scalar, vector-valued, and weighted tools used later. In particular, we present the weighted realizations and establish some properties for trace-finite von Neumann algebras and the exceptional semifinite von Neumann algebra $\NN=L_\infty(\R)\bar\otimes M_d(\C)$ that will play an important role in obtaining noncommutative analogues of Rubio de Francia's factorization and extrapolation theory.

\subsection{Vector-valued noncommutative \texorpdfstring{$L_p$}{Lp} spaces}

We recall the facts about noncommutative integration that will be used
below; see \cite{Segal1953,Dixmier1953,Terp1981,PisierXu2003}.  Let $(\mathcal M,\tau)$ be a von Neumann algebra equipped with a normal faithful semifinite trace $\tau$. Let 
$L_0(\M)$ denote the $*$-algebra of $\tau$-measurable operators affiliated
with $\M$.  For $x\in L_0(\M)$, write
\(
   x=u|x|
\)
for its polar decomposition, and let $s(x)=s(|x|)$ denote the support
projection of $|x|$.

For $0<p<\infty$, the noncommutative $L_p$ space associated with
$(\M,\tau)$ is
\[
   L_p(\M)
   =
   \{x\in L_0(\M):\tau(|x|^p)<\infty\},
   \qquad
   \|x\|_p=\tau(|x|^p)^{1/p}.
\]
We set $L_\infty(\M)=\M$ with its operator norm.  For $p\ge1$,
$L_p(\M)$ is a Banach space.  If $1\le p<\infty$ and $p'$ is the
conjugate exponent of $p$, then the trace pairing gives the isometric
duality
\(
   L_p(\M)^*=L_{p'}(\M).
\)
We repeatedly use the noncommutative H\"older inequality: if
$0<r,p,q\le\infty$ and
\(
   1/r=1/p+1/q,
\)
then
\begin{equation}\label{eq:prelim-holder}
   \|xy\|_r\le \|x\|_p\|y\|_q.
\end{equation}
The corresponding estimate for products of finitely many factors follows
by iteration whenever the reciprocal exponents add correctly.

We shall also use the following standard tracial property. If $a_j\in L_{p_j}(\M)$,
$1\le j\le n$, and
\(
   \sum_{j=1}^n 1/{p_j}=1,
\)
then every cyclic permutation of $a_1\cdots a_n$ belongs to
$L_1(\M)$ and has the same trace.

Given $1\leq p\leq \infty$, $a,b\in L_0(\M)$ such that $ab\in L_p(\M)$, the standard polar decomposition argument gives that $\| ab\|_p=\||a|\cdot b\|_p=\|a\cdot |b^*|\|_p$.

The positive cone of $L_p(\M)$ is denoted by $L_p^+(\M)$.  If
$1\le p<\infty$, every $x\in L_p(\M)$ admits a decomposition
\[
   x=x_1-x_2+i x_3-i x_4,
   \qquad x_j\in L_p^+(\M),
\]
with
\(
   \|x_j\|_p\le \|x\|_p,
\)
for $j=1,2,3,4$.

Complex interpolation is compatible with the semifinite noncommutative
$L_p$ scale:
if $1\le p_0,p_1\le\infty$, $0<\theta<1$, and
\(
   1/p=(1-\theta)/{p_0}+\theta/{p_1},
\)
then
\[
   \bigl(L_{p_0}(\M),L_{p_1}(\M)\bigr)_\theta=L_p(\M)
\]
isometrically.

\bigskip

We next recall the maximal and summation spaces introduced by Pisier \cite{Pisier1998} and
developed by Junge \cite{Junge2002}.  Pisier's construction is formulated for hyperfinite von Neumann algebras, while Junge's work supplies the
general operator-space formulation used in the maximal--summation duality.

Let $\AA$ be an arbitrary index set and let $1\le p\le\infty$.  A family
$x=(x_\alpha)_{\alpha\in\AA}\subset L_p(\M)$ belongs to
$L_p(\M;\ell_\infty(\AA))$ if it admits a common factorization
\[
   x_\alpha=ay_\alpha b,
   \qquad
   a,b\in L_{2p}(\M),\quad
   y_\alpha\in\M,\quad
   \sup_{\alpha\in\AA}\|y_\alpha\|_\infty<\infty.
\]
Its norm is
\begin{equation}\label{eq:prelim-Lp-linfty}
   \|x\|_{L_p(\M;\ell_\infty(\AA))}
   =
   \inf_{x_\alpha=ay_\alpha b}
   \|a\|_{2p}
   \sup_{\alpha\in\AA}\|y_\alpha\|_\infty
   \|b\|_{2p}.
\end{equation}
The infimum is taken over all common factorizations of the preceding
form.  With this definition, $L_p(\M;\ell_\infty(\AA))$ is a Banach
space.  The finite-subfamily property states that
\begin{equation}\label{eq:prelim-finite-subfamilies}
   \|(x_\alpha)_{\alpha\in\AA}\|_{L_p(\M;\ell_\infty(\AA))}
   =
   \sup_{\mathcal F\subset\AA,\ \mathcal F\ \mathrm{finite}}
   \|(x_\alpha)_{\alpha\in\mathcal F}\|
      _{L_p(\M;\ell_\infty(\mathcal F))}.
\end{equation}
Thus an arbitrary family belongs to the space exactly when its finite
subfamilies have uniformly bounded maximal norms.  See
\cite[Proposition~2.1(ii) and Remark~2.4]{JungeXu2007} for more details.

For a positive family, the same result gives the order description
\begin{equation}\label{eq:prelim-positive-majorant}
   \|(x_\alpha)_\alpha\|_{L_p(\M;\ell_\infty(\mathcal A))}
   =
   \inf\bigl\{
       \|a\|_p:\;a\in L_p^+(\M),\
       x_\alpha\le a
       \text{ for every }\alpha\in \mathcal A
   \bigr\}.
\end{equation}
For this reason, we also use the customary notation
\[
   \big\|{\sup_{\alpha\in\mathcal A}}^+x_\alpha\big\|_p
   :=
   \|(x_\alpha)_\alpha\|_{L_p(\M;\ell_\infty(\mathcal A))},
\]
where the symbol ``$\sup^+$'' does not assert
the existence of an order supremum in $\M$.

For the summation space, let $\mathcal A$ be an index set and let
$1\le p<\infty$.  For
$x=(x_\alpha)_{\alpha\in\mathcal A}$ with finite support, set
\begin{equation}\label{eq:prelim-Lp-l1}
   \|x\|_{L_p(\M;\ell_1(\mathcal A))}
   :=
   \inf\Bigg\{
   \Big\|\sum_{k,\alpha}a_{k\alpha}^*a_{k\alpha}\Big\|_p^{1/2}
   \Big\|\sum_{k,\alpha}b_{k\alpha}^*b_{k\alpha}\Big\|_p^{1/2}:
   x_\alpha=\sum_k a_{k\alpha}^*b_{k\alpha}
   \text{ for every }\alpha\in\mathcal A
   \Bigg\}.
\end{equation}
The infimum is taken over finite families
$a_{k\alpha},b_{k\alpha}\in L_{2p}(\M)$. And we define the space $L_p(\M;\ell_1(\mathcal A))$ as the completion of the space containing all finitely supported families with the norm $\| \cdot\|_{L_p(\M;\ell_1(\mathcal A))}$.

We record explicitly the maximal--summation duality used below.

\begin{lemma}[{\cite[Proposition~2.1(iii)]{JungeXu2007}}]\label{lem:Lp-l1-dual}
    Let $1\leq p<\infty$ and $p'$ be the conjugate exponent of $p$. Then
    \begin{equation}
        L_p(\M;\ell_1(\mathcal A))^*=L_{p'}(\M;\ell_\infty(\mathcal A))
    \end{equation}
    isometrically via the following duality bracket
    \begin{equation}
        \big\langle (x_\alpha),(y_\alpha)\big\rangle=\sum_{\alpha\in \mathcal A}\tau(x_\alpha y_\alpha).
    \end{equation}
\end{lemma}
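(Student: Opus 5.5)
We will prove the two inequalities $\|\ell_y\|\le\|y\|_{L_{p'}(\M;\ell_\infty)}$ and $\|y\|_{L_{p'}(\M;\ell_\infty)}\le\|\ell_y\|$ separately, where $\ell_y(x)=\sum_\alpha\tau(x_\alpha y_\alpha)$. It suffices to work with finitely supported families $x$ (these are dense by definition). Thanks to the finite-subfamily property \eqref{eq:prelim-finite-subfamilies}, it also suffices to treat finite index sets $\mathcal A$ in the converse direction.

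\emph{Step 1: each $y$ defines a functional of norm at most $\|y\|$.} Fix a factorization $y_\alpha=c z_\alpha d$ with $c,d\in L_{2p'}(\M)$ and $\sup_\alpha\|z_\alpha\|_\infty\le 1$. Fix also a decomposition $x_\alpha=\sum_k a_{k\alpha}^*b_{k\alpha}$. By cyclicity of the trace,
\[
\tau(x_\alpha y_\alpha)=\sum_k\tau\bigl((a_{k\alpha}d^*)^*(b_{k\alpha}cz_\alpha)\bigr).
\]
The Cauchy--Schwarz inequality for $\sum_{k,\alpha}\tau(u_{k\alpha}^*v_{k\alpha})$ then bounds $|\ell_y(x)|$ by
\[
\tau\Bigl(d\bigl(\textstyle\sum a^*a\bigr)d^*\Bigr)^{1/2}\,\Bigl(\textstyle\sum_{k,\alpha}\tau(z_\alpha^*c^*b_{k\alpha}^*b_{k\alpha}cz_\alpha)\Bigr)^{1/2}.
\]
For $X\ge0$ we have $\tau(z^*Xz)=\tau(X^{1/2}zz^*X^{1/2})\le\|z\|_\infty^2\tau(X)$. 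Hence the second factor is at most $\tau\bigl(c^*(\sum b^*b)c\bigr)^{1/2}$. Hölder's inequality \eqref{eq:prelim-holder} with $1=1/p+1/p'$ now gives
\[
|\ell_y(x)|\le\|\textstyle\sum a^*a\|_p^{1/2}\|d\|_{2p'}\,\|\textstyle\sum b^*b\|_p^{1/2}\|c\|_{2p'}.
\]
Taking infima over both factorizations yields the claim.

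\emph{Step 2: every functional comes from some $y$, with no loss of norm.} Let $\phi$ be a functional of norm $1$ on the finite-$\mathcal A$ space, and set $\phi_\alpha(u)=\phi(u\,e_\alpha)$. Each $\phi_\alpha$ is a functional on $L_p(\M)$, so $\phi_\alpha=\tau(\,\cdot\,y_\alpha)$ for some $y_\alpha\in L_{p'}(\M)$. The norm bound says exactly that the sesquilinear data satisfy
\[
\Bigl|\sum_{k,\alpha}\tau(a_{k\alpha}^*b_{k\alpha}y_\alpha)\Bigr|\le\|\textstyle\sum a^*a\|_p^{1/2}\|\textstyle\sum b^*b\|_p^{1/2}.
\]
By the AM--GM inequality and rescaling $a\mapsto ta$, $b\mapsto t^{-1}b$, this becomes
\[
2\Bigl|\sum\tau(a^*by)\Bigr|\le\|\textstyle\sum a^*a\|_p+\|\textstyle\sum b^*b\|_p .
\]
Writing $\|X\|_p=\sup\{\tau(XD):D\ge0,\ \|D\|_{p'}\le1\}$ for $X\ge0$, a Hahn--Banach/minimax (Pietsch-type) separation argument should then produce positive $D_1,D_2$ in the unit ball of $L_{p'}(\M)$ such that
\[
\Bigl|\sum_k\tau(a_k^*b_ky_\alpha)\Bigr|\le\tau\bigl(D_1\textstyle\sum a^*a\bigr)^{1/2}\tau\bigl(D_2\textstyle\sum b^*b\bigr)^{1/2}.
\]
The separation uses convexity of the set of functions $(D_1,D_2)\mapsto\tau(D_1\sum a^*a)+\tau(D_2\sum b^*b)-2\mathrm{Re}\sum\tau(a^*by)$ and weak$^*$-compactness of the unit ball of $L_{p'}$; for $p=1$ the ball is that of $\M$ and one uses normality.

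Finally, a Radon--Nikodym step on the GNS-type Hilbert spaces $\overline{\M D_i^{1/2}}\subset L_2(\M)$ gives contractions $z_\alpha$ with $y_\alpha=D_2^{1/2}z_\alpha D_1^{1/2}$, up to the usual support projections. Then $c=D_2^{1/2}$ and $d=D_1^{1/2}$ lie in the unit ball of $L_{2p'}(\M)$, so $\|y\|_{L_{p'}(\ell_\infty)}\le1$.

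\emph{Main obstacle.} The delicate point is the separation/minimax argument in Step 2 that produces the common densities $D_1,D_2$ independent of $\alpha$. This is also where one must handle non-invertible $D_i$, by compressing to their supports and perturbing by $\varepsilon\cdot1$ when $\tau(1)<\infty$, or by a limiting argument in general. Here I would follow the argument of Junge--Xu \cite{JungeXu2007}, which reduces to the Junge--Pisier factorization for $p'$-summing-type bilinear forms.
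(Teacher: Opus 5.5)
The paper gives no argument for this lemma; it is quoted directly from \cite[Proposition~2.1(iii)]{JungeXu2007}. Your proposal is a correct self-contained proof along the standard Hahn--Banach lines. Step~1 is complete as written. Step~2 is on firmer ground than your ``should'' suggests. The functions $f_{a,b}(D_1,D_2)=\tau(D_1\sum a^*a)+\tau(D_2\sum b^*b)-2\,\mathrm{Re}\sum_{k,\alpha}\tau(a_{k\alpha}^*b_{k\alpha}y_\alpha)$ form a convex set: concatenating $(\sqrt{\lambda}\,a,\sqrt{\lambda}\,b)$ with $(\sqrt{1-\lambda}\,a',\sqrt{1-\lambda}\,b')$ over disjoint $k$-indices realizes $\lambda f_{a,b}+(1-\lambda)f_{a',b'}$. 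Each $f_{a,b}$ is affine and weak$^*$-continuous on the weak$^*$-compact convex set of pairs of positive elements of the unit ball of $L_{p'}(\M)$. Each also has nonnegative maximum there, by your AM--GM inequality. Ky Fan's lemma therefore yields the common pair $(D_1,D_2)$.

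There are two corrections to the last step.

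First, in a semifinite algebra $\M D_i^{1/2}$ need not lie in $L_2(\M)$. The right spaces are $H_i=\overline{L_{2p}(\M)D_i^{1/2}}=L_2(\M)s(D_i)$. On $H_1\times H_2$, the assignment $(aD_1^{1/2},bD_2^{1/2})\mapsto\tau(a^*by_\alpha)$ is a well-defined contractive form $B$ with $B(m\xi,\eta)=B(\xi,m^*\eta)$ for $m\in\M$. So the implementing operator commutes with left multiplication, and hence is right multiplication by a contraction $z_\alpha\in\M$. This gives $y_\alpha=D_2^{1/2}z_\alpha D_1^{1/2}$ directly, with no invertibility needed. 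You should therefore drop the $\varepsilon\cdot 1$ perturbation, which is in any case unavailable in $L_{p'}(\M)$ for $p'<\infty$ when $\tau(1)=\infty$.

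Second, the reduction to finite $\mathcal A$ is superfluous. Each $f_{a,b}$ involves only finitely many $\alpha$, so the argument runs verbatim for arbitrary $\mathcal A$. The reduction is also better avoided. The finite-subfamily property \eqref{eq:prelim-finite-subfamilies} at exponent $p'>1$ is itself a consequence of this duality (your Step~2 run for arbitrary $\mathcal A$ proves it), so invoking it here risks circularity.

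Compared with the paper's bare citation, your route is longer but elementary. It covers arbitrary index sets and arbitrary semifinite $\M$ at once, and it delivers the finite-subfamily property as a byproduct.
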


Throughout the paper, we will omit the index set $\mathcal A$ if no confusion can occur.

We shall use the following standard facts without reproving their operator-space constructions.

\begin{lemma}[{\cite[Chapter~3]{Pisier1998} and \cite{JungeXu2007}}]
\label{lem:Lp-l1-interpolation}
Let $\AA$ be an arbitrary index set, let
$1\le p_0,p_1\leq \infty$, and let $0<\theta<1$.  If
\[
   \frac1p=\frac{1-\theta}{p_0}+\frac{\theta}{p_1},
\]
then
\[
   \bigl(
      L_{p_0}(\M;\ell_\infty(\AA)),
      L_{p_1}(\M;\ell_\infty(\AA))
   \bigr)_\theta
   =
   L_p(\M;\ell_\infty(\AA))
\]
isometrically.
\end{lemma}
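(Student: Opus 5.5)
The plan is to obtain the interpolation identity by duality from a corresponding statement for the summation spaces $L_p(\M;\ell_1(\AA))$, and to prove directly only the easy contractive inclusion of $(L_{p_0}(\M;\ell_\infty),L_{p_1}(\M;\ell_\infty))_\theta$ into $L_p(\M;\ell_\infty)$, or conversely. By the finite-subfamily property \eqref{eq:prelim-finite-subfamilies} we first reduce to a finite index set $\mathcal F$. Both sides are then compatible with restriction to finite subfamilies: restriction is contractive on each endpoint space, hence on the interpolation space. So it suffices to prove the isometric identity uniformly in finite $\mathcal F$ and then take suprema.

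\textbf{Inclusion $L_p(\M;\ell_\infty)\subset(\cdot,\cdot)_\theta$.} Take a near-optimal factorization $x_\alpha=ay_\alpha b$ with $\|a\|_{2p}=\|b\|_{2p}=1$ and $\sup_\alpha\|y_\alpha\|_\infty\le\|x\|+\varepsilon$. Write $a=u|a^*|$ and $b=|b|v$ via polar decompositions. Define the analytic family
\[
x_\alpha(z)=u|a^*|^{p(\frac{1-z}{p_0}+\frac{z}{p_1})}\,y_\alpha\,|b|^{p(\frac{1-z}{p_0}+\frac{z}{p_1})}v .
\]
On $\Re z=j$ this is a factorization with the outer factors in $L_{2p_j}$ of norm $1$ (the imaginary powers are unitary on the support), so $\|x(j+it)\|_{L_{p_j}(\ell_\infty)}\le\|x\|+\varepsilon$. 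Boundedness and continuity of the family require the usual truncation of $|a^*|,|b|$ to spectral subspaces where they are bounded above and below. In the finite-trace case this is harmless; in the semifinite case we additionally restrict to finite-trace projections and pass to the limit. This yields $\|x\|_\theta\le\|x\|_{L_p(\ell_\infty)}$.

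\textbf{Reverse inclusion, by duality.} First show the analogous identity $(L_{p_0}(\ell_1),L_{p_1}(\ell_1))_\theta=L_p(\ell_1)$ with norm $\le$ in the direction ``$L_p(\ell_1)$ into the interpolation space.'' This is again a Stein-type analytic family: starting from a decomposition $x_\alpha=\sum_k a_{k\alpha}^*b_{k\alpha}$, set $A=(\sum a^*a)^{1/2}$ and $B=(\sum b^*b)^{1/2}$ and rescale as $a_{k\alpha}(z)=a_{k\alpha}A^{-1}A^{p(\cdot)}$. Next apply Lemma~\ref{lem:Lp-l1-dual} together with the duality theorem for complex interpolation, $(X_0,X_1)_\theta^*=(X_0^*,X_1^*)^\theta$. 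For finite $\mathcal F$ and $1<p_j<\infty$ the spaces are reflexive, so the upper method coincides with the lower one. Dualizing the $\ell_1$ inclusion then gives $\|x\|_{L_p(\ell_\infty)}\le\|x\|_\theta$ for $1\le p_0,p_1<\infty$, i.e. $1<p_0',p_1'$. The endpoint cases $p_j=\infty$ (where $L_\infty(\M;\ell_\infty)=\M\bar\otimes\ell_\infty$ and the factorization is trivial) and $p_j=1$ are handled by the reiteration theorem, interpolating first between interior exponents.

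\textbf{Main obstacle.} The hard part is the reverse inclusion in full generality: making the duality rigorous requires density of the intersection and a correct identification of the dual of the interpolation space at the endpoints $p_j\in\{1,\infty\}$. I would therefore rely on Pisier's result for hyperfinite algebras and Junge--Xu's extension, as the citation indicates, and only verify that their operator-space definition agrees with \eqref{eq:prelim-Lp-linfty}.
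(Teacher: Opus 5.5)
The paper gives no proof of this lemma; it quotes it from Pisier (hyperfinite case) and Junge--Xu. Your strategy is the standard route and is sound for $1<p_0,p_1<\infty$. You get the easy inclusion from an analytic family built on a near-optimal factorization, and the reverse inequality by dualizing the easy inclusion for the summation spaces. One minor slip: $a=u|a^*|$ is not a polar decomposition in general. Write $a=u|a|$ and $b=|b^*|v$, and put the complex powers on $|a|$ and $|b^*|$, so that they sit next to $y_\alpha$.

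\textbf{The gap is the endpoint step.} Reiteration runs in the wrong direction. It computes $(X_{\theta_0},X_{\theta_1})_\eta$ from a couple $(X_0,X_1)$ whose intermediate spaces are already known. Starting from couples with exponents in $(1,\infty)$, it never gives information about a couple containing $L_1(\M;\ell_\infty)$ or $L_\infty(\M;\ell_\infty)$; Wolff's theorem likewise needs such a couple as input. This matters, because $p_1=\infty$ is exactly the case the paper uses, in the Stein interpolation between $L_p(\M;\ell_\infty)$ and $L_\infty(\M;\ell_\infty)$ in the proof of Theorem~\ref{thm:factorization-both-sides}.

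\textbf{Repair for $p_1=\infty$.} You do not need reflexivity or the identification of the upper and lower methods for the inequality you want. Take $f$ an admissible analytic family for $x$ and $g$ your $\ell_1$-family for a test element $a\in L_{p'}(\M;\ell_1)$, and apply the three lines lemma to $h(z)=\sum_\alpha\tau(f_\alpha(z)g_\alpha(z))$. On the boundary lines you only need the elementary H\"older half of the pairing, $|\sum_\alpha\tau(x_\alpha y_\alpha)|\le\|x\|_{L_q(\M;\ell_\infty)}\|y\|_{L_{q'}(\M;\ell_1)}$. Lemma~\ref{lem:Lp-l1-dual} is needed only at the middle exponent, to identify $\sup_a|h(\theta)|$ with $\|x\|_{L_p(\M;\ell_\infty)}$. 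This covers every $\ell_\infty$-exponent in $(1,\infty]$, including $\infty$, which is paired with $L_1(\M;\ell_1)$.

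\textbf{Repair for $p_j=1$.} This endpoint is still not reached, since it would require a summation space at exponent $\infty$, which the paper's framework does not provide. One way to supply it:
\begin{enumerate}
\item Define the quantity $\|x\|_{L_\infty(\ell_1)}$ by the infimum formula \eqref{eq:prelim-Lp-l1} with $p=\infty$.
\item Check $|\sum_\alpha\tau(x_\alpha y_\alpha)|\le\|x\|_{L_\infty(\ell_1)}\|y\|_{L_1(\ell_\infty)}$ by Cauchy--Schwarz. For $x_\alpha=\sum_k a_{k\alpha}^*b_{k\alpha}$ and $y_\alpha=cz_\alpha d$ one has $\tau(x_\alpha y_\alpha)=\sum_k\tau\big((a_{k\alpha}d^*)^*\,b_{k\alpha}cz_\alpha\big)$.
\item Note that your $\ell_1$-family has purely imaginary exponent on that boundary line, so its norm there is at most $1$.
\end{enumerate}
Without an argument of this kind, the statement for all $1\le p_0,p_1\le\infty$ is not proved. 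Falling back on the literature, as you suggest at the end, is what the paper does, but then the reiteration step should be dropped rather than presented as a proof.
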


\begin{lemma}[{\cite[Section~2]{JungeXu2007}}]
\label{lem:Lp-lp-positive}
Let $\AA$ be an arbitrary index set and let $1\le p<\infty$.  If
$x=(x_\alpha)_{\alpha\in\AA}
\in L_p^+(\M;\ell_1(\AA))$, then
\[
   \|x\|_{L_p(\M;\ell_1(\AA))}
   =
   \Big\|\sum_{\alpha\in\AA}x_\alpha\Big\|_p.
\]
\end{lemma}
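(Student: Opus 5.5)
The plan is to prove the two inequalities separately: the lower bound "$\ge$" by duality, and the upper bound "$\le$" by factorization. Write $x=(x_\alpha)\in L_p^+(\M;\ell_1(\AA))$ with finite support first; the general case then follows by density and continuity of both sides. In the positive case, the natural meaning is that each $x_\alpha\ge0$ and the family is a limit of finitely supported positive families.

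\emph{Upper bound.} Take the trivial decomposition $x_\alpha=(x_\alpha^{1/2})^*x_\alpha^{1/2}$, that is, $a_{\alpha}=b_{\alpha}=x_\alpha^{1/2}\in L_{2p}(\M)$ with a single index $k$. Substituting into \eqref{eq:prelim-Lp-l1} gives
\[
\|x\|_{L_p(\M;\ell_1)}\le \Big\|\sum_\alpha x_\alpha\Big\|_p^{1/2}\Big\|\sum_\alpha x_\alpha\Big\|_p^{1/2}=\Big\|\sum_\alpha x_\alpha\Big\|_p .
\]

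\emph{Lower bound.} Use Lemma~\ref{lem:Lp-l1-dual}. Since $\sum_\alpha x_\alpha\in L_p^+(\M)$, there is a positive $y\in L_{p'}^+(\M)$ with $\|y\|_{p'}=1$ and $\tau\big(y\sum_\alpha x_\alpha\big)=\|\sum_\alpha x_\alpha\|_p$; for $p>1$ one takes $y=(\sum x_\alpha)^{p-1}/\|\sum x_\alpha\|_p^{p-1}$, and for $p=1$ one takes $y=1$. Now consider the constant family $y_\alpha=y$. It belongs to $L_{p'}(\M;\ell_\infty)$ with norm at most $\|y\|_{p'}=1$, via the factorization $y_\alpha=y^{1/2}\cdot 1\cdot y^{1/2}$. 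For $p'=\infty$ the factorization $1\cdot y\cdot 1$ is used instead, or equivalently one invokes \eqref{eq:prelim-positive-majorant}. The duality bracket then gives
\[
\Big\|\sum_\alpha x_\alpha\Big\|_p=\sum_\alpha\tau(x_\alpha y)=\langle x,(y_\alpha)\rangle\le \|x\|_{L_p(\M;\ell_1)} .
\]

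\emph{Where care is needed.} The only delicate point is the passage to infinitely supported positive families. Both sides are monotone limits over finite subsets: the right side by monotone convergence in $L_p$ together with the fact that the norm is dominated by the finite partial sums, and the left side by completion. The issue is to ensure that $\sum_\alpha x_\alpha$ converges in $L_p$. This follows because the finite partial sums form a Cauchy net: by the equality already proved on finite families, the norm of differences of partial sums equals the $L_p(\M;\ell_1)$ norm of the tails, and these tend to $0$.
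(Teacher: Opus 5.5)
Your proof is correct and is the standard argument behind this fact; the paper gives no proof of its own, only the citation to Junge--Xu. The two halves are the trivial factorization $a_\alpha=b_\alpha=x_\alpha^{1/2}$ for ``$\le$'' and contractivity of $(x_\alpha)_\alpha\mapsto\sum_\alpha x_\alpha$ for ``$\ge$''.

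Two small remarks. First, the duality step can be replaced by the direct H\"older bound $\|\sum_{k,\alpha}a_{k\alpha}^*b_{k\alpha}\|_p\le\|\sum_{k,\alpha}a_{k\alpha}^*a_{k\alpha}\|_p^{1/2}\|\sum_{k,\alpha}b_{k\alpha}^*b_{k\alpha}\|_p^{1/2}$, which holds for every decomposition and needs no positivity. Second, your claim that the tails tend to $0$ needs an explicit justification: the truncations $x\mapsto(\chi_F(\alpha)x_\alpha)_\alpha$ are contractive on $L_p(\M;\ell_1)$, and finitely supported families are dense by definition.
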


The following result will allow us to
test general maximal families against positive summation sequences, which is well-known to experts; see e.g. \cite{JungeXu2007}. We include here a proof with a better resulting constant.

\begin{lemma}\label{lem:l1-four-positive}
Let $1\le p<\infty$ and let $x=(x_\alpha)_\alpha$ be an element of $L_p(\M;\ell_1)$.  For every $\varepsilon>0$, there exist positive families $x^{(1)},\ldots,x^{(4)}$ and coefficients
\(
   \lambda_j=i^j
\)
, $j=1,2,3,4$ such that
\(
   x=\sum_{j=1}^4\lambda_jx^{(j)}
\)
and
\[
   \|x^{(j)}\|_{L_p(\M;\ell_1)}
   \le
   \|x\|_{L_p(\M;\ell_1)}+\varepsilon,\qquad j=1,2,3,4.
\]
If $x$ is self-adjoint coordinatewise, two positive families suffice.
\end{lemma}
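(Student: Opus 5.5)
We prove Lemma~\ref{lem:l1-four-positive} by working with finitely supported families first and then passing to the completion by density.

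\emph{Finitely supported case.} Fix a factorization $x_\alpha=\sum_k a_{k\alpha}^*b_{k\alpha}$ with
\[
\Big\|\sum_{k,\alpha}a_{k\alpha}^*a_{k\alpha}\Big\|_p^{1/2}\Big\|\sum_{k,\alpha}b_{k\alpha}^*b_{k\alpha}\Big\|_p^{1/2}\le \|x\|_{L_p(\M;\ell_1)}+\varepsilon .
\]
Rescale $a\mapsto ta$, $b\mapsto t^{-1}b$ so that both factors equal $(\|x\|+\varepsilon)^{1/2}$; call them $A=\sum a^*a$ and $B=\sum b^*b$, so $\|A\|_p=\|B\|_p\le\|x\|+\varepsilon$. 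Then use the polarization identity
\[
a^*b=\frac14\sum_{j=1}^4 i^j\,(b+i^ja)^*(b+i^ja).
\]
One checks $\frac14\sum_j i^j (b+i^ja)^*(b+i^ja)=\frac14\sum_j i^j\big(b^*b+a^*a+i^jb^*a+\overline{i^j}a^*b\big)=a^*b$. This uses $\sum_j i^j=0$, $\sum_j i^{2j}=0$, and $\sum_j i^j\overline{i^j}=4$.

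Set
\[
x^{(j)}_\alpha=\frac14\sum_k (b_{k\alpha}+i^ja_{k\alpha})^*(b_{k\alpha}+i^ja_{k\alpha})\ge0 .
\]
Then $x=\sum_j i^jx^{(j)}$. By Lemma~\ref{lem:Lp-lp-positive}, $\|x^{(j)}\|_{L_p(\ell_1)}=\|\sum_\alpha x^{(j)}_\alpha\|_p$. The operator inequality $(b+ca)^*(b+ca)\le 2(a^*a+b^*b)$ for $|c|=1$ gives
\[
\sum_\alpha x^{(j)}_\alpha\le \tfrac12(A+B),
\]
so monotonicity of the $L_p$ norm on positives gives $\|x^{(j)}\|\le\frac12(\|A\|_p+\|B\|_p)\le\|x\|+\varepsilon$.

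\emph{Self-adjoint case.} If each $x_\alpha$ is self-adjoint, replace the factorization by its symmetrization $x_\alpha=\frac12\sum_k(a^*b+b^*a)$. Polarization then gives $\frac12(a^*b+b^*a)=\frac14\big[(a+b)^*(a+b)-(a-b)^*(a-b)\big]$, which uses only two positive families with the same bound.

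\emph{General case.} Approximate $x$ in $L_p(\M;\ell_1)$ by finitely supported families $x_n$ with $\|x-x_n\|\le\varepsilon2^{-n-2}$. Decompose $x_1$ and each difference $x_{n+1}-x_n$ as above, with errors $\varepsilon 2^{-n-2}$. Summing the $j$-th positive pieces gives a series of positive families converging absolutely in $L_p(\M;\ell_1)$. Its norm is at most $\|x_1\|+\sum_n\|x_{n+1}-x_n\|+\varepsilon\le\|x\|+O(\varepsilon)$.

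The main obstacle is checking that the limit is still a \emph{positive} element whose norm is given by Lemma~\ref{lem:Lp-lp-positive}. Triangle inequality controls the norm bound regardless. Positivity, meaning membership in the closed positive cone, I would handle by noting that the partial sums are increasing positive families, and that the norm $\|\sum_\alpha\cdot\|_p$ is additive along such sums. Relabelling $\varepsilon$ then finishes the argument.
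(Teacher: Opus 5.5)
Your proof is correct and shares the paper's core: take a nearly optimal factorization, balance it so that $\|\sum a^*a\|_p=\|\sum b^*b\|_p$, polarize, and dominate each positive piece by $\frac12\sum_{k,\alpha}(a_{k\alpha}^*a_{k\alpha}+b_{k\alpha}^*b_{k\alpha})$. Your self-adjoint case is literally the paper's. The routes differ in two places.

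\emph{The complex case.} The paper proves only the self-adjoint case. It then applies that case separately to $\Re x$ and $\Im x$, using two independent near-optimal factorizations and the fact that $\Re,\Im$ are contractive on $L_p(\M;\ell_1)$. You instead apply the four-term identity $a^*b=\frac14\sum_j i^j(b+i^ja)^*(b+i^ja)$ to a single factorization of $x$. This needs nothing about $\Re,\Im$ and gives the same bound $\|x\|+\varepsilon$. In fact your $x^{(4)}-x^{(2)}$ and $x^{(1)}-x^{(3)}$ are exactly $\Re x$ and $\Im x$, expressed through that one factorization.

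\emph{Elements of the completion.} You also handle elements that are not finitely supported, which the paper does not address: its ``nearly optimal factorization'' is only defined for finitely supported families. In the paper's later duality use, that case suffices by density.

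Some small corrections to your argument:
\begin{enumerate}
\item The claim that $\|\sum_\alpha\cdot\|_p$ is ``additive'' along increasing positive sums is false for $p>1$; it holds only for $p=1$. You do not need it. The norm bound already follows from the triangle inequality. Positivity of the limit follows because each coordinate map $L_p(\M;\ell_1)\to L_p(\M)$ is contractive and $L_p^+(\M)$ is closed. Alternatively, if the positive cone is defined as the closure of finitely supported positive families, positivity holds by definition.
\item In the self-adjoint case, the approximants $x_n$ must themselves be self-adjoint for two pieces to suffice. Replace $x_n$ by $\frac12(x_n+x_n^*)$. This is no farther from $x$, because the coordinatewise adjoint is an isometry of $L_p(\M;\ell_1)$ (swap $a$ and $b$ in the factorization).
\item After rescaling, you can only make the two factors equal to each other, with common value at most $(\|x\|+\varepsilon)^{1/2}$, not exactly equal to $(\|x\|+\varepsilon)^{1/2}$. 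Your next line already states this correctly.
\end{enumerate}
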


\begin{proof}
Firstly, suppose that $x=(x_\alpha)_\alpha$ is self-adjoint coordinatewise. Given $\varepsilon>0$, we consider a nearly optimal factorization
\[
    x_\alpha=\sum_{k}a_{k\alpha}^*b_{k\alpha}=\sum_{k}b_{k\alpha}^*a_{k\alpha},\qquad \Big\|\sum_{k,\alpha}a_{k\alpha}^*a_{k\alpha}\Big\|_p^{1/2}
   \Big\|\sum_{k,\alpha}b_{k\alpha}^*b_{k\alpha}\Big\|_p^{1/2}\leq \|(x_\alpha)_\alpha\|_{L_p(\M;\ell_1)}+\varepsilon.
\]
We then rescale the factorization so that
\[
    \Big\|\sum_{k,\alpha}a_{k\alpha}^*a_{k\alpha}\Big\|_p=
   \Big\|\sum_{k,\alpha}b_{k\alpha}^*b_{k\alpha}\Big\|_p=\Big\|\sum_{k,\alpha}a_{k\alpha}^*a_{k\alpha}\Big\|_p^{1/2}
   \Big\|\sum_{k,\alpha}b_{k\alpha}^*b_{k\alpha}\Big\|_p^{1/2}.
\]
Define $u=(u_\alpha)_\alpha$ and $v=(v_\alpha)_\alpha$ by
\[
    u_\alpha =\frac{1}{4}\sum_k (a_{k\alpha}+b_{k\alpha})^*(a_{k\alpha}+b_{k\alpha}),\qquad v_\alpha =\frac{1}{4}\sum_k (a_{k\alpha}-b_{k\alpha})^*(a_{k\alpha}-b_{k\alpha}),
\]
so both $u$ and $v$ are positive. Moreover, we have
\[
    u_\alpha-v_\alpha=x_\alpha ,\qquad u_\alpha+v_\alpha=\frac{1}{2}\sum_k (a_{k\alpha}^*a_{k\alpha}+b_{k\alpha}^*b_{k\alpha}).
\]
Then by Lemma~\ref{lem:Lp-lp-positive}
\[
    \begin{aligned}
        \| u\|_{L_p(\M;\ell_1)}&\leq \|u+v\|_{L_p(\M;\ell_1)} \leq \frac{1}{2}\Big\| \sum_{k,\alpha} (a_{k\alpha}^*a_{k\alpha}+b_{k\alpha}^*b_{k\alpha}) \Big\|_p \\ 
        &\leq \frac{1}{2}\Big[\Big\| \sum_{k,\alpha} a_{k\alpha}^*a_{k\alpha} \Big\|_p+\Big\| \sum_{k,\alpha} b_{k\alpha}^*b_{k\alpha} \Big\|_p\Big]\leq \|(x_\alpha)_\alpha\|_{L_p(\M;\ell_1)}+\varepsilon.
    \end{aligned}
\]

Given a general $x\in L_p(\M;\ell_1)$, applying this to $\Re x$ and $\Im x$, both
contractive on $L_p(\M;\ell_1)$, gives the general assertion. 
\end{proof}

We shall also use the following well-known balanced module H\"older inequality. See, for instance,  \cite{Pisier1998,JungeXu2007}.

\begin{lemma}
\label{lem:vector-holder}
Let $1\le r,p,s\le\infty$ and $1/r=1/p+1/s$. For 
$a,b\in L_{2s}(\M)$ and $(x_\alpha)_\alpha\in L_p(\M;\ell_\infty)$,
\begin{equation}\label{eq:vector-holder}
 \|(ax_\alpha b)_\alpha\|_{L_r(\M;\ell_\infty)}
 \le \|a\|_{2s}\|(x_\alpha)_\alpha\|_{L_p(\M;\ell_\infty)}\|b\|_{2s}.
\end{equation}
In particular, if $u\in L_s^+(\M)$, then
\[
 \big\|(u^{1/2}x_\alpha u^{1/2})_\alpha\big\|_{L_r(\M;\ell_\infty)}
 \le \|u\|_s\|(x_\alpha)_\alpha\|_{L_p(\M;\ell_\infty)}.
\]
\end{lemma}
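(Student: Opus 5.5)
The plan is to prove this directly from the factorization definition \eqref{eq:prelim-Lp-linfty} of the maximal norm; no interpolation is needed. Fix $\varepsilon>0$ and choose a common factorization $x_\alpha=cy_\alpha e$ with $c,e\in L_{2p}(\M)$, $y_\alpha\in\M$, and
\[
\|c\|_{2p}\sup_\alpha\|y_\alpha\|_\infty\|e\|_{2p}\le \|(x_\alpha)_\alpha\|_{L_p(\M;\ell_\infty)}+\varepsilon .
\]
Then $ax_\alpha b=(ac)\,y_\alpha\,(eb)$ is again a common factorization with the same bounded middle factors $y_\alpha$. The only thing left to check is that the outer factors lie in the correct space, $ac, eb\in L_{2r}(\M)$.

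For this I would use the exponent identity
\[
\frac1{2r}=\frac1{2s}+\frac1{2p},
\]
which follows from $1/r=1/p+1/s$. The noncommutative Hölder inequality \eqref{eq:prelim-holder} then gives
\[
\|ac\|_{2r}\le\|a\|_{2s}\|c\|_{2p},\qquad \|eb\|_{2r}\le \|e\|_{2p}\|b\|_{2s}.
\]
Inserting these into the infimum defining the $L_r(\M;\ell_\infty)$ norm yields
\[
\|(ax_\alpha b)_\alpha\|_{L_r(\M;\ell_\infty)}\le \|a\|_{2s}\bigl(\|(x_\alpha)_\alpha\|_{L_p(\M;\ell_\infty)}+\varepsilon\bigr)\|b\|_{2s},
\]
and letting $\varepsilon\to0$ proves \eqref{eq:vector-holder}.

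The endpoint cases need a brief check. If $s=\infty$, then $a,b\in\M$ and the same argument works since $L_\infty\cdot L_{2p}\subset L_{2p}$. If $p=\infty$, the definition should be read with $L_\infty(\M;\ell_\infty)$ meaning uniformly bounded families, factored with $c=e=1$; under $\tau(1)<\infty$ this fits the definition, and in the semifinite case one can simply take $c=e=1$ formally, since $a\,y_\alpha\, b$ with $a,b\in L_{2s}=L_{2r}$ is directly a valid factorization. I expect this endpoint bookkeeping to be the only mildly delicate point. The range restriction $r\ge1$ is needed only so that the target is the Banach space as defined.

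For the particular case, take $a=b=u^{1/2}$ with $u\in L_s^+(\M)$. Then $\|u^{1/2}\|_{2s}=\tau(u^{s})^{1/(2s)}=\|u\|_s^{1/2}$, so the product of the two outer norms is $\|u\|_s$, which gives the stated bound.
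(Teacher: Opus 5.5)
Your proof is correct and is exactly the paper's argument: compose a common factorization $x_\alpha=cy_\alpha e$ with $a,b$ to get $(ac)\,y_\alpha\,(eb)$, then bound the outer factors by H\"older using $\frac{1}{2r}=\frac{1}{2s}+\frac{1}{2p}$. The separate endpoint discussion is unnecessary, and the $\tau(1)<\infty$ caveat plays no role. When $p=\infty$ the definition already takes $c,e\in L_{\infty}(\M)=\M$, and $1\in\M$ always, so the same H\"older step covers $p=\infty$ or $s=\infty$ without change.
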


\begin{proof}
Take a common factorization
\[
   x_\alpha=c y_\alpha d,
   \qquad c,d\in L_{2p}(\M),
   \qquad \sup_\alpha\|y_\alpha\|_\infty<\infty.
\]
Then $ax_\alpha b=(ac)y_\alpha (db)$ forms a factorization of $(ax_\alpha b)_{\alpha}$. Since $1/(2r)=1/(2s)+1/(2p)$, H\"older's inequality gives
\[
    \begin{aligned}
        \|(ax_\alpha b)_\alpha\|_{L_r(\M;\ell_\infty)}
 &\le\inf_{ax_\alpha b=(ac)y_\alpha (db)} \| ac\|_{2r}\big(\sup_{\alpha}\| y_\alpha\|_\infty\big)\| db\|_{2r} \\
 &\leq \inf_{x_\alpha=c y_\alpha d} \|a\|_{2s}\cdot \| c\|_{2p}\big(\sup_{\alpha}\| y_\alpha\|_\infty\big)\| d\|_{2p}\cdot \| b\|_{2s} \\
 &= \|a\|_{2s}\|(x_\alpha)_\alpha\|_{L_p(\M;\ell_\infty)} \| b\|_{2s},
    \end{aligned}
\]
which gives the estimate as desired.
\end{proof}

\subsection{{Weighted \texorpdfstring{$L_p$}{Lp}-spaces and more properties on trace-finite von Neumann algebras}} 
In this subsection, the noncommutative measure space $(\mathcal M,\tau)$ is assumed to be finite.
We say $w\in L_0^+(\M)$ is an operator weight if it has full support. Given an operator weight $w$, it is not hard to verify that $w^{-1}$ is also an operator weight since $(\M,\tau)$ is finite. For $1\le p<\infty$, and $x\in L_0(\M)$, its weighted $L_p$-norm is defined as
\begin{equation}\label{eq:prelim-weighted-Lp}
   \|x\|_{L_p(\M,w)}=
   \|w^{1/2p}xw^{1/2p}\|_p,
\end{equation}
and the weighted $L_p$-space is defined to be the space of measurable operators with finite weighted $L_p$-norm. That is,
\[
    L_p(\M,w):=\{ x\in L_0(\M):\|x\|_{L_p(\M,w)}<\infty\}.
\]

The map
\[
   \begin{aligned}
       J_{p,w}:L_p(\M,w)&\longrightarrow L_p(\M) \\
       x&\longmapsto w^{1/2p}xw^{1/2p}
   \qquad
   \end{aligned}
\]
is an onto isometry.  Indeed, it is an isometry by definition.  If
$y\in L_p(\M)$, then
\(
   x=w^{-1/2p}yw^{-1/2p}
\)
belongs to $L_0(\M)$, and 
\(
   J_{p,w}(x)=y.
\)
It follows from the completeness of $L_p(\M)$ that $L_p(\M,w)$ is a
Banach space. Put
 \[
    e_n(w)=1_{[1/n,n]}(w),
    \qquad
    \mathscr C(w)=\bigcup_{n\ge1}e_n(w)\M e_n(w).
 \]

 \begin{lemma}\label{lem:density of M}
     For every $1\leq p<\infty$, $\mathscr C(w)$ is dense in $L_p(\M,w)$. As a consequence, $\M\cap L_p(\M,w)\subset L_p(\M,w)$ is dense.
 \end{lemma}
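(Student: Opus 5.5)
The plan is to transport the problem to the unweighted space via the isometry $J_{p,w}$. Since $J_{p,w}:L_p(\M,w)\to L_p(\M)$ is onto and isometric, it is enough to show that $J_{p,w}(\mathscr C(w))=\{w^{1/2p}xw^{1/2p}: x\in \mathscr C(w)\}$ is dense in $L_p(\M)$.

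Write $e_n=e_n(w)$. Because $e_n$ commutes with $w$ and $w e_n$ is bounded with bounded inverse on $e_n\M e_n$ (its spectrum lies in $[1/n,n]$), the map $x\mapsto w^{1/2p}xw^{1/2p}$ sends $e_n\M e_n$ onto $e_n\M e_n$. Indeed, for $y\in e_n\M e_n$ the preimage is $x=(we_n)^{-1/2p}y(we_n)^{-1/2p}$, with the inverse taken in $e_n\M e_n$, and $x$ is bounded. Hence
\[
J_{p,w}(\mathscr C(w))=\bigcup_n e_n\M e_n .
\]
It therefore suffices to prove that $\bigcup_n e_n\M e_n$ is dense in $L_p(\M)$.

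Since $w$ is a full-support element of $L_0^+(\M)$, we have $w>0$ with trivial kernel, so $e_n\uparrow 1$ strongly and $\tau(1-e_n)\to 0$ by normality and finiteness of $\tau$. Now take $y\in L_p(\M)$. First approximate $y$ in $L_p$ by some $z\in\M$; this is possible because $\M$ is dense in $L_p(\M)$ when $\tau$ is finite (use spectral truncations of $|y|$ together with the polar decomposition). Then
\[
\|z-e_nze_n\|_p\le \|(1-e_n)z\|_p+\|e_nz(1-e_n)\|_p\le 2\|z\|_\infty\,\tau(1-e_n)^{1/p}\to0,
\]
by Hölder's inequality \eqref{eq:prelim-holder} and $\|1-e_n\|_p=\tau(1-e_n)^{1/p}$. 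This gives density, and pulling back through $J_{p,w}$ yields density of $\mathscr C(w)$ in $L_p(\M,w)$.

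For the consequence, note that $\mathscr C(w)\subset\M$. Also, every $x\in e_n\M e_n$ satisfies $\|x\|_{L_p(\M,w)}=\|(we_n)^{1/2p}x(we_n)^{1/2p}\|_p\le n^{1/p}\|x\|_\infty\tau(1)^{1/p}<\infty$. So $\mathscr C(w)\subset \M\cap L_p(\M,w)$, and the second claim follows.

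The only point needing care is justifying the algebra of unbounded operators: the identity $w^{1/2p}e_n=(we_n)^{1/2p}$ and the commutation of $e_n$ with $w^{\pm1/2p}$ both come from functional calculus in $L_0(\M)$. Beyond that the argument is routine, so I expect no real obstacle.
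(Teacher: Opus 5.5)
Your proof is correct and follows the same route as the paper. Like the paper, you identify $J_{p,w}(\mathscr C(w))$ with $\bigcup_n e_n(w)\M e_n(w)$ using bounded functional calculus on the corners, and then reduce to density of these corners in the unweighted $L_p(\M)$. The only cosmetic difference is the order of the two approximations: you first approximate by a bounded $z$ and then cut down using the explicit finite-trace bound $2\|z\|_\infty\tau(1-e_n)^{1/p}$, whereas the paper first cuts down via $e_nxe_n\to x$ in $L_p(\M)$ and then approximates inside the corner.
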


 \begin{proof}
     Given $a\in L_p(\M,w)$, we only need to show that
     \begin{equation}\label{equ:inf represent of dense}
         \inf_{z\in \mathscr C(w)}\| J_{p,w}(z-a)\|_p=0.
     \end{equation}
      Firstly, we show that $J_{p,w}(\mathscr C(w))=\mathscr C(w)$. On the one hand, given $x\in \mathscr C(w)$, there exists some $m\in\N_+$ and $y\in\M$ such that $x= e_m(w)ye_m(w)$. Then
     \[
         J_{p,w}(x)=e_m(w)w^{1/2p}yw^{1/2p}e_m(w)=e_m(w)\big( e_m(w)w^{1/2p}yw^{1/2p}e_m(w)\big)e_m(w)\in \mathscr C(w).
     \]
     On the other hand, we know that $e_m(w)w^{-1/2p}yw^{-1/2p}e_m(w)\in \mathscr C(w)$ and
     \[
         J_{p,w}(e_m(w)w^{-1/2p}yw^{-1/2p}e_m(w))=x,
     \]
     which proves the claim. Then we have $\mathscr C(w)\subset L_p(\M,w)$.

     Thus the equation \eqref{equ:inf represent of dense} is equivalent to
     \[
         \inf_{z\in \mathscr C(w)}\| z-w^{1/2p}aw^{1/2p}\|_p=0,
     \]
     which is equivalent to the density of $\mathscr C (w)$ in the unweighted space $L_p(\M)$. The definition of an operator weight gives $e_n(w)\uparrow1$. Then for every
 $x\in L_p(\M)$,
 \begin{equation}\label{eq:weighted-corner-convergence}
    e_n(w)xe_n(w)\longrightarrow x
    \qquad\text{in }L_p(\M),
 \end{equation}
which is the standard projection approximation in the noncommutative $L_p$-space. Moreover, $\M\subset L_p(\M)$ is dense; thus
 \[
     \inf_{z\in \M}\| e_n(w)(z-x)e_n(w)\|_p\leq \inf_{z\in \M}\| z-x\|_p=0,
 \]
 which gives the density as desired.

 Finally, $\mathscr C(w)\subset \M$ gives the density of $\M\cap L_p(\M,w)$ in $L_p(\M,w)$.
 \end{proof}

Now we are able to define the vector-valued weighted noncommutative $L_p$-spaces.
Given $1\leq p\leq\infty$ and an arbitrary index set $\AA$, the space $L_p(\M,w;\ell_\infty(\AA))$ is defined to be the set of families $(x_\alpha)_{\alpha\in\mathcal A}\subset L_0(\M)$ with finite norm
\begin{equation}\label{eq:prelim-weighted-vector-space}
 \|(x_\alpha)_{\alpha\in\AA}\|_{L_p(\M,w;\ell_\infty(\AA))}
 :=\|(w^{1/2p}x_\alpha w^{1/2p})_{\alpha\in\AA}\|_{L_p(\M;\ell_\infty(\AA))}<\infty.
\end{equation}

\bigskip

Next, we record the scalar and maximal H\"older type testing estimates that will be used in later sections.

\begin{lemma}\label{lem:maximal-testing}
Let $1<p<\infty$, $(x_\alpha)_\alpha\in L_p(\M;\ell_\infty)$ and $x\in L_p(\M)$.
\begin{enumerate}[label=\textnormal{(\roman*)}]
\item If $1<p<r<\infty$, and $1/p=1/r+1/s$, then, for every $\varepsilon>0$, there is a full-support operator $u\in L_s^+(\M)$ such that
\begin{equation}\label{eq:single-inverse-testing}
 \|u^{-1/2}xu^{-1/2}\|_r\|u\|_s \le2\|x\|_p+\varepsilon.
\end{equation}
Moreover, if $x$ is positive, then the constant $2$ can be replaced by $1$.

\item If $1<r<p<\infty$ and $1/r=1/p+1/s$, then
\begin{equation}\label{eq:general-maximal-testing}
 \sup_{\substack{ u\in L_s^+(\M)\\s(u)=1}}
 \frac{
  \big\|{\sup_\alpha}^+u^{1/2}x_\alpha u^{1/2}\big\|_r
 }{\|u\|_s}
 \le\big\|{\sup_\alpha}^+x_\alpha\big\|_p\le
 4\sup_{\substack{ u\in L_s^+(\M)\\s(u)=1}}
 \frac{
  \big\|{\sup_\alpha}^+u^{1/2}x_\alpha u^{1/2}\big\|_r
 }{\|u\|_s}.
\end{equation}

\end{enumerate}
\end{lemma}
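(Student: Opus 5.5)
For part (i), I would first treat the positive case by an explicit choice. Let $x\ge0$ and $\delta>0$, and set $c=x+\delta$ and $u=c^{p/s}$. Since $\tau(1)<\infty$, $u$ lies in $L_s^+(\M)$ and has full support. Using $x\le c$ and the fact that conjugation preserves order, we get $u^{-1/2}xu^{-1/2}=c^{-p/2s}xc^{-p/2s}\le c^{1-p/s}=c^{p/r}$, because $1-p/s=p/r$. Hence
\[
\|u^{-1/2}xu^{-1/2}\|_r\,\|u\|_s\le \|c\|_p^{p/r}\|c\|_p^{p/s}=\|c\|_p\le \|x\|_p+\delta\,\tau(1)^{1/p}.
\]
Choosing $\delta$ small gives constant $1$.

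For general $x$ in part (i), write $a=|x|$, $b=|x^*|$ and $x=b^{1/2}va^{1/2}$ via the polar decomposition. Put $c=a+b+\delta$ and $u=c^{p/s}$. By H\"older,
\[
\|u^{-1/2}xu^{-1/2}\|_r\le \|c^{-p/2s}b^{1/2}\|_{2r}\,\|a^{1/2}c^{-p/2s}\|_{2r}.
\]
Each factor squared equals $\|c^{-p/2s}bc^{-p/2s}\|_r$ or $\|c^{-p/2s}ac^{-p/2s}\|_r$. Since $a,b\le c$, each is at most $\|c^{p/r}\|_r=\|c\|_p^{p/r}$. Multiplying by $\|u\|_s=\|c\|_p^{p/s}$ gives the bound $\|c\|_p\le\|a\|_p+\|b\|_p+O(\delta)=2\|x\|_p+\varepsilon$.

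For part (ii), the left inequality is exactly Lemma~\ref{lem:vector-holder} applied with $a=b=u^{1/2}\in L_{2s}(\M)$. For the right inequality I would use the duality of Lemma~\ref{lem:Lp-l1-dual}. The norm $\|{\sup_\alpha}^+x_\alpha\|_p$ is the supremum of $|\sum_\alpha\tau(x_\alpha y_\alpha)|$ over finitely supported $y$ with $\|y\|_{L_{p'}(\M;\ell_1)}\le1$. Lemma~\ref{lem:l1-four-positive} splits such a $y$ into four positive families of norm at most $1+\varepsilon$, which produces the constant $4$. It therefore suffices to bound $|\sum_\alpha\tau(x_\alpha y_\alpha)|$ for a positive finitely supported $y$. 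For such $y$, Lemma~\ref{lem:Lp-lp-positive} gives $Y:=\sum_\alpha y_\alpha$ with $\|Y\|_{p'}\le1+\varepsilon$.

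Next I check the exponents. From $1/r=1/p+1/s$ we get $1/p'=1/r'+1/s$, so $1<p'<r'<\infty$. Part (i) in the positive case, with exponents $(p',r',s)$, then yields a full-support $u\in L_s^+$ such that $\|u^{-1/2}Yu^{-1/2}\|_{r'}\|u\|_s\le\|Y\|_{p'}+\varepsilon$. We write
\[
\tau(x_\alpha y_\alpha)=\tau\big((u^{1/2}x_\alpha u^{1/2})(u^{-1/2}y_\alpha u^{-1/2})\big).
\]
The family $(u^{-1/2}y_\alpha u^{-1/2})_\alpha$ is positive with sum $u^{-1/2}Yu^{-1/2}$. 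By Lemma~\ref{lem:Lp-lp-positive} its $L_{r'}(\M;\ell_1)$-norm is $\|u^{-1/2}Yu^{-1/2}\|_{r'}$. Duality at exponent $r$ then gives
\[
\Big|\sum_\alpha\tau(x_\alpha y_\alpha)\Big|\le \big\|{\sup_\alpha}^+u^{1/2}x_\alpha u^{1/2}\big\|_r\,\frac{1+2\varepsilon}{\|u\|_s}.
\]
Letting $\varepsilon\to0$ and taking the supremum over $y$ gives the right inequality.

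I expect the main obstacle to be justifying the trace identity above, because $u^{-1/2}$ is unbounded. With the explicit $u=(Y+\delta)^{p'/s}$, however, $u^{-1/2}$ is bounded by $\delta^{-p'/2s}$. Then all operators involved lie in the appropriate $L_q$ spaces: $u^{-1/2}y_\alpha u^{-1/2}\le u^{-1/2}Yu^{-1/2}\in L_{r'}$, and $u^{1/2}x_\alpha u^{1/2}\in L_r$ by H\"older. So the cyclicity of the trace for exponents summing to $1$ applies directly.
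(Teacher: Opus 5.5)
Your proposal is correct and follows essentially the paper's proof. Part (ii) matches it step for step: the four positive pieces from Lemma~\ref{lem:l1-four-positive}, then part (i) at exponents $(p',r',s)$ applied to $\sum_\alpha y_\alpha$. In part (i), your choice $u=(|x|+|x^*|+\delta)^{p/s}$ with the split $x=|x^*|^{1/2}va^{1/2}$ is only a minor variant of the paper's $u_\delta=|x|^{p/s}+q|x|^{p/s}q^*+\delta 1$ with Douglas-factorization contractions. Your justification of the trace identity is sound: cycle the factors $u^{1/2},x_\alpha,u^{1/2},u^{-1/2}y_\alpha u^{-1/2}$, whose exponents sum to $1$. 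The paper leaves this detail implicit.
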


\begin{proof}
For (i), let $x=q|x|$ be the polar decomposition of $x$ and set
\[
 u_\delta=|x|^{p/s}+q|x|^{p/s}q^*+\delta1.
\]
Trace-finiteness makes $u_\delta$ invertible in $L_s^+(\M)$.  Since
$|x|^{p/s}\le u_\delta$ and $q|x|^{p/s}q^*\le u_\delta$, Douglas factorization makes the
two outside factors in
\[
 u_\delta^{-1/2}xu_\delta^{-1/2}
 =\bigl(u_\delta^{-1/2}q|x|^{p/2s}\bigr)|x|^{p/r}
   \bigl(|x|^{p/2s}u_\delta^{-1/2}\bigr)
\]
contractions.  Therefore
\[
 \|u_\delta^{-1/2}xu_\delta^{-1/2}\|_r
 \le\|x\|_p^{p/r},
 \qquad
 \limsup_{\delta\downarrow0}\|u_\delta\|_s
 \le2\|x\|_p^{p/s}.
\]
The identity $p/r+p/s=1$ proves the estimate. If $x$ is positive, set $u_\delta=x^{p/s}+\delta 1$ and follow the proof above to obtain (i).

The first inequality in (ii) follows immediately from Lemma~\ref{lem:vector-holder}. To prove the reverse inequality,
assume that the supremum on its right is finite. Lemma~\ref{lem:Lp-l1-dual} gives that
\[
    \big\|{\sup_\alpha}^+x_\alpha\big\|_p=\sup_{(y_\alpha)_\alpha:\| (y_\alpha)\|_{L_{p'}(\M;\ell_1)}=1}\Big| \sum_{\alpha}\tau(y_\alpha x_\alpha)\Big|.
\]
Using the decomposition in Lemma~\ref{lem:l1-four-positive}, we obtain
 \begin{equation}\label{equ:one to four}
     \begin{aligned}
         \big\|{\sup_\alpha}^+x_\alpha\big\|_p&\leq\sup_{(y_\alpha)_\alpha:\| (y_\alpha)\|_{L_{p'}(\M;\ell_1)}=1}\sum_{j=1}^4\Big| \sum_{\alpha}\tau(y_\alpha^{(j)} x_\alpha)\Big| \\
         &\leq 4\sup_{\substack{(y_\alpha)_\alpha \in L_{p'}^+(\M;\ell_1)\\ \| (y_\alpha)\|_{L_{p'}(\M;\ell_1)}=1} }\Big| \sum_{\alpha}\tau(y_\alpha x_\alpha)\Big|.
     \end{aligned}
 \end{equation}
Given $(y_\alpha)_\alpha\in L_{p'}^+(\M;\ell_1)$ with the norm $\|(y_\alpha)_\alpha\|_{L_{p'}(\M;\ell_1)}=1$, put $\widetilde{y}=\sum_\alpha y_\alpha$. Then by (i), for any $\varepsilon>0$, we can find some full-support $u\in L_s^+(\M)$ such that
\[
    \| u^{-1/2}\widetilde{y}u^{-1/2}\|_{r'}\| u\|_s\leq \| \widetilde{y}\|_{p'}+\varepsilon=1+\varepsilon.
\]
H\"older's inequality and maximal--summation duality give
\[
    \begin{aligned}
        \Big|\sum_\alpha\tau(y_\alpha x_\alpha)\Big| &=\Big|\sum_\alpha\tau(u^{-1/2}y_\alpha u^{-1/2}u^{1/2}x_\alpha u^{1/2})\Big|\\
        &\leq \|(u^{-1/2}y_\alpha u^{-1/2})_\alpha\|_{L_{r'}(\M;\ell_1)}\cdot \big\| {\sup_\alpha}^+ u^{1/2}x_\alpha u^{1/2}\big\|_r \\
        &\leq(1+\varepsilon )\cdot \frac{\big\| {\sup_\alpha}^+ u^{1/2}x_\alpha u^{1/2}\big\|_r}{\|u\|_s}\\
        &\leq (1+\varepsilon )\cdot\sup_{\substack{ u\in L_s^+(\M)\\s(u)=1}} \frac{\big\| {\sup_\alpha}^+ u^{1/2}x_\alpha u^{1/2}\big\|_r}{\|u\|_s},
    \end{aligned}
\]
which holds for any $\varepsilon>0$. Thus we have
\begin{equation}\label{equ:dual singal}
    \Big|\sum_\alpha\tau(y_\alpha x_\alpha)\Big|\leq \sup_{\substack{ u\in L_s^+(\M)\\s(u)=1}}
 \frac{
  \big\|{\sup_\alpha}^+u^{1/2}x_\alpha u^{1/2}\big\|_r
 }{\|u\|_s}.
\end{equation}
Combining \eqref{equ:one to four} and \eqref{equ:dual singal}, we get the conclusion.  
\end{proof}

The following lemma establishes a monotonicity property of noncommutative maximal norms.
\begin{lemma}\label{lem:compare-sandwich}
Let $1\le p<\infty$, $0<u\le v$ be positive invertible elements of $L_0(\M)$, and let $(x_\alpha)_{\alpha\in\AA}\subset L_0(\M)$.  If
\(
   (v^{1/2}x_\alpha v^{1/2})_{\alpha\in\AA}
   \in L_p(\M;\ell_\infty(\AA)),
\)
then
\[
\begin{aligned}
   \big\|
      {\sup_\alpha}^+u^{1/2}x_\alpha u^{1/2}
   \big\|_{p}
   \le
   \big\|
     {\sup_\alpha}^+ v^{1/2}x_\alpha v^{1/2}
   \big\|_{p}
\end{aligned}
\]
Moreover,
\(
   v^{-1}\le u^{-1},
\)
and, if
\(
   (u^{-1/2}x_\alpha u^{-1/2})_{\alpha\in\AA}
   \in L_p(\M;\ell_\infty(\AA)),
\)
then
\[
\begin{aligned}
   \big\| {\sup_\alpha}^+
      v^{-1/2}x_\alpha v^{-1/2}
   \big\|_{p}
   \le
   \big\|{\sup_\alpha}^+
      u^{-1/2}x_\alpha u^{-1/2}
   \big\|_{p}.
\end{aligned}
\]
\end{lemma}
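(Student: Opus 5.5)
The plan is to write $v^{1/2}=c\,u^{1/2}$ with a contraction $c\in\M$, and then transport the maximal norm through $c$ using the balanced module H\"older inequality.

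\emph{Step 1 (contraction).} Since $u$ is invertible, define
\[
c:=u^{-1/2}v^{1/2}.
\]
Then $c^*c=v^{1/2}u^{-1}v^{1/2}$. The inequality $u\le v$ gives $v^{-1/2}uv^{-1/2}\le 1$. Hence $\|u^{1/2}v^{-1/2}\|_\infty\le1$, so $u^{1/2}v^{-1/2}$ is a contraction in $\M$.

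\emph{Step 2 (first inequality).} Set $d=u^{1/2}v^{-1/2}$, so $\|d\|_\infty\le1$. We have
\[
u^{1/2}x_\alpha u^{1/2}=d\,(v^{1/2}x_\alpha v^{1/2})\,d^*.
\]
Take a near-optimal common factorization $v^{1/2}x_\alpha v^{1/2}=a y_\alpha b$ with $a,b\in L_{2p}(\M)$. Then $u^{1/2}x_\alpha u^{1/2}=(da)y_\alpha(bd^*)$, and
\[
\|da\|_{2p}\le\|a\|_{2p},\qquad \|bd^*\|_{2p}\le\|b\|_{2p}.
\]
This is exactly Lemma~\ref{lem:vector-holder} with $s=\infty$, where $d\in\M=L_\infty$ plays the role of both outer factors. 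Taking the infimum over factorizations gives the first claim. All products here are in $L_0(\M)$, which is a $*$-algebra because $\M$ is finite (or, on $\NN$, the products are pointwise), so the manipulations are legitimate.

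\emph{Step 3 (operator monotonicity of the inverse).} To show $v^{-1}\le u^{-1}$, write
\[
u^{-1/2}vu^{-1/2}=(v^{1/2}u^{-1/2})^*(v^{1/2}u^{-1/2}).
\]
This is at least $1$ because $u^{-1/2}(v-u)u^{-1/2}\ge0$. So $T=v^{1/2}u^{-1/2}$ satisfies $T^*T\ge1$, and $T$ is invertible. It follows that $TT^*\ge1$, since $T^*T$ and $TT^*$ are unitarily equivalent via polar decomposition once $T$ is invertible. That is, $v^{1/2}u^{-1}v^{1/2}\ge1$, which is equivalent to $u^{-1}\ge v^{-1}$.

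\emph{Step 4 (second inequality).} Apply Step 2 to the pair $v^{-1}\le u^{-1}$, which gives the second inequality directly.

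\emph{Main obstacle.} The only delicate point is that $u$ and $v$ are unbounded $\tau$-measurable operators. The inequalities $u\le v$ and $v^{-1/2}uv^{-1/2}\le1$, and the contraction claim, must therefore be justified in $L_0(\M)$. I would handle this with the measurable-operator form of Douglas' lemma, as already used in Lemma~\ref{lem:maximal-testing}(i). From $\|u^{1/2}\xi\|^2\le\|v^{1/2}\xi\|^2$ on a common core, Douglas' lemma produces a contraction $d\in\M$ with $u^{1/2}=d\,v^{1/2}$. This bypasses the invertibility bookkeeping in Step 3: applying Douglas' lemma to $T^*T\ge1$ gives $T^{-1}$ as a contraction, hence $TT^*\ge1$.
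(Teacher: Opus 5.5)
Your argument is correct and is essentially the paper's proof. You use the contraction $d=u^{1/2}v^{-1/2}$ with $u^{1/2}=dv^{1/2}$ and Lemma~\ref{lem:vector-holder} with $s=\infty$, then operator monotonicity of the inverse, $v^{-1}\le u^{-1}$, to apply the first part to the pair $(v^{-1},u^{-1})$; your Step 3 simply spells out that monotonicity fact, which the paper asserts directly.

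One cleanup: your opening sentence and the definition $c=u^{-1/2}v^{1/2}$ in Step 1 have the direction reversed. In fact $c^*c=v^{1/2}u^{-1}v^{1/2}\ge 1$, so $c$ is expansive rather than a contraction, and $cu^{1/2}\neq v^{1/2}$ in general. Delete them, since Step 2 correctly uses $d$ instead.
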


\begin{proof}
The operator order $u\le v$ gives a contraction $c\in\M$ such that
$u^{1/2}=cv^{1/2}$.  Hence, in $L_0(\M)$,
\(
 u^{1/2}x_\alpha u^{1/2}
 =c\bigl(v^{1/2}x_\alpha v^{1/2}\bigr)c^*
\). 
Lemma~\ref{lem:vector-holder}, with $s=\infty$, $a=c$, and $b=c^*$,
proves the first norm inequality.

Full support of $v$ and $u$ gives the reverse order $0< v^{-1}\leq u^{-1}$. Applying the first part to the pair $(v^{-1}, u^{-1})$ proves the reverse estimate.
\end{proof}

\subsection{Weighted \texorpdfstring{$L_p$}{Lp}-spaces and more properties on \texorpdfstring{$\mathcal N$}{calN}} Even though we do not know how to properly define the weighted $L_p$-spaces and establish the related properties on general noncommutative measure spaces, it is possible to do that on the exceptional semifinite spaces,
\[
 \NN=L_\infty(\R)\bar\otimes M_d(\C),
\]
equipped with the canonical tensor trace. In this case, it is well-known that 
\begin{equation}\label{equ:canonic isomorphism}
 L_p(\NN)=L_p(\R;S_p^d)
\end{equation}
isometrically for $1\le p<\infty$.

A \emph{matrix weight} is, throughout this paper, a strongly measurable
function $w:\R\to M_d(\C)$ which is positive definite almost everywhere
and locally integrable.  All powers are formed by pointwise functional
calculus. Given $1\le p<\infty$ and a strongly measurable matrix-valued
function $f$, we define the weighted $L_p$-norm
\begin{equation}\label{eq:matrix-global-weighted-norm}
\begin{aligned}
 \|f\|_{L_p(\NN,w)}
 &=\left(\int_\R
   \|w(t)^{1/2p}f(t)w(t)^{1/2p}\|_{S_p^d}^p\,dt\right)^{1/p},
\end{aligned}
\end{equation}
and the related weighted $L_p$-space $L_p(\NN,w)$ consists of all such functions with finite weighted $L_p$-norm. Similar to the abstract case, the map
\[
    \begin{aligned}
        J_{p,w}:L_p(\NN,w)&\longrightarrow L_p(\NN) \\
        f&\longmapsto w^{1/2p}fw^{1/2p}
    \end{aligned}
\]
is an onto isometry. Thus $L_p(\NN,w)$ is a Banach space. Notice that for a matrix weight $w$, it may not be a measurable operator, i.e., $w\notin L_0(\NN)$. Similarly, $L_p(\NN,w)\not\subset L_0(\NN)$. The multiplication $w^{1/2p}fw^{1/2p}$ should be understood pointwise, and for $f\in L_p(\NN,w)$, we may identify $w^{1/2p}fw^{1/2p}$ as an element of $L_p(\NN)\subset L_0(\NN)$ since it has finite $L_p(\NN)$-norm. However, a matrix weight $w$ is locally integrable so that we have the following suitable dense class.

Define $\SS_{\NN}^c$ as the subspace of $\NN$ with  elements having compact support, equivalently,
\[
    \SS_\NN^c=\bigcup_{m\geq 1}L_\infty(-m,m)\bar\otimes M_d(\C).
\]
One can verify that $\SS_\NN^c\subset L_p(\NN,w)$ since $w$ is locally integrable. The following lemma shows that it is a dense subspace of $L_p(\NN,w)$.

\begin{lemma}\label{lem:density of N}
For $1\le p<\infty$, $\SS_\NN^c$ is dense in
$L_p(\NN,w)$.
\end{lemma}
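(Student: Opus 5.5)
Since $J_{p,w}$ is an onto isometry from $L_p(\NN,w)$ to $L_p(\NN)=L_p(\R;S_p^d)$, density of $\SS_\NN^c$ in $L_p(\NN,w)$ is equivalent to density of $J_{p,w}(\SS_\NN^c)=\{w^{1/2p}gw^{1/2p}:g\in\SS_\NN^c\}$ in $L_p(\R;S_p^d)$. We follow the scheme of Lemma~\ref{lem:density of M}, but perform the spectral truncation of $w$ pointwise in the fibres rather than through the spectral projections of a measurable operator, since $w$ need not lie in $L_0(\NN)$.

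Fix $f\in L_p(\NN,w)$ and put $h=w^{1/2p}fw^{1/2p}\in L_p(\R;S_p^d)$. For $n\ge1$ let
\[
e_n(t)=1_{[1/n,n]}(w(t))\,1_{(-n,n)}(t),
\]
a strongly measurable projection-valued function, because spectral projections of a measurable positive-definite matrix function are measurable by the Borel functional calculus. Set $h_n=e_nhe_n$ and $g_n=w^{-1/2p}h_nw^{-1/2p}$. On the range of $e_n(t)$ the eigenvalues of $w(t)$ lie in $[1/n,n]$, so $\|w^{-1/2p}(t)e_n(t)\|\le n^{1/2p}$. Thus $g_n$ is supported in $(-n,n)$ and $\|g_n(t)\|_{S_p^d}\le n^{1/p}\|h(t)\|_{S_p^d}$. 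This alone does not make $g_n$ bounded, so we additionally truncate $h$: replace $h_n$ by $h_{n}^{(N)}=e_n\,h\,1_{\{\|h\|_{S_p^d}\le N\}}\,e_n$, whose corresponding $g$ then lies in $L_\infty(-n,n)\bar\otimes M_d(\C)\subset\SS_\NN^c$. By construction $J_{p,w}(g)=h_n^{(N)}$.

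It remains to show $\|h-h_n^{(N)}\|_{L_p(\R;S_p^d)}\to0$ as $N\to\infty$ and then $n\to\infty$. For fixed $n$, dominated convergence in $N$ gives $h_n^{(N)}\to e_nhe_n$, using the pointwise bound $\|e_nhe_n\|_{S_p^d}\le\|h\|_{S_p^d}$. Next, since $w(t)$ is positive definite a.e., $e_n(t)\uparrow 1$ for a.e.\ $t$. Finite dimensionality then gives $e_n(t)h(t)e_n(t)\to h(t)$ in $S_p^d$ pointwise a.e., and the integrand $\|h-e_nhe_n\|_{S_p^d}^p\le 2^p\|h\|_{S_p^d}^p$ is integrable, so dominated convergence applies again. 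A diagonal choice of $N=N(n)$ completes the argument.

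The main obstacle is verifying measurability of $t\mapsto e_n(t)$ and of the products, since the weighted objects are defined only pointwise. For this we would note that $t\mapsto 1_{[1/n,n]}(w(t))$ is a limit of continuous functions of $w(t)$, hence measurable, and that products of strongly measurable matrix functions are measurable. Everything else reduces to the scalar dominated convergence theorem.
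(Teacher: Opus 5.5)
Your proof is correct, but it takes a more roundabout route than the paper. The paper never uses the spectral structure of $w$. It truncates $f$ itself by the scalar cutoff $\chi_{E_{m,k}}$, with $E_{m,k}=(-m,m)\cap\{t:\|f(t)\|_{S_\infty^d}\le k\}$. A scalar indicator commutes with the fibrewise sandwich, so $J_{p,w}(\chi_{E_{m,k}}f)=\chi_{E_{m,k}}J_{p,w}f$. Hence $f_{m,k}=\chi_{E_{m,k}}f$ is already bounded with compact support, and $\|f-f_{m,k}\|_{L_p(\NN,w)}=\|\chi_{\R\setminus E_{m,k}}J_{p,w}f\|_p\to0$ by dominated convergence. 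The paper needs no spectral projections of $w$, no pullback through $w^{-1/2p}$, and no second truncation. The measurability issue you flag as the main obstacle therefore never arises.

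Your approach is modelled on Lemma~\ref{lem:density of M}: you approximate on the image side and then pull back. That is why you need $e_n=1_{[1/n,n]}(w)1_{(-n,n)}$ to control $w^{-1/2p}$, plus the extra cutoff $1_{\{\|h\|_{S_p^d}\le N\}}$. In exchange, you prove something slightly stronger. Your approximants lie in $\bigcup_n e_n\SS_\NN^c e_n$, i.e.\ in the fibrewise spectral corners where $1/n\le w\le n$. This class is mapped onto itself by $J_{p,w}$ for every $p$, since both $g$ and $w^{1/2p}gw^{1/2p}$ are bounded with compact support. It is the exact semicommutative analogue of $\mathscr C(w)$, the kind of corner class used for the truncated interpolation in Theorem~\ref{thm:factorization-both-sides}. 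That stronger form is not needed for the lemma as stated.
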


\begin{proof}
Given $f\in L_p(\NN,w)$, put
\[
 E_{m,k}=(-m,m)\cap
 \{t:\|f(t)\|_{S_\infty^d}\le k\},
 \qquad f_{m,k}=\chi_{E_{m,k}}f.
\]
Then $f_{m,k}\in \SS_\NN^c$, and
\[
 \|f-f_{m,k}\|_{L_p(\NN,w)}=\|  J_{p,w}(f-f_{m,k})\|_p
 =\|\chi_{\R\setminus E_{m,k}}J_{p,w}f\|_p\longrightarrow 0
\]
when $m,k\to +\infty$ since the sets $E_{m,k}$ increase to $\R$ up to a null set.
\end{proof}

\bigskip

For $1\le p< \infty$ and an arbitrary index set
$\AA$, define $L_p(\mathcal N,w;\ell_\infty(\AA))$ as the set of families of strongly measurable functions $(f_\alpha)_{\alpha\in\mathcal A}$ with finite norm 
\begin{equation}\label{eq:matrix-global-vector-weighted-norm}
 \|(f_\alpha)_{\alpha\in\AA}\|_{L_p(\NN,w;\ell_\infty(\AA))}
 :=\|(w^{1/2p}f_\alpha w^{1/2p})_{\alpha\in\AA}\|_{L_p(\NN;\ell_\infty(\AA))}.
\end{equation}

\bigskip

A strongly measurable matrix-valued function $w$ is called \emph{invertible} (resp. {\it positive}) when
$w(t)$ is invertible (resp. positive) for almost every $t\in\mathbb R$. The H\"older type testing property and monotonicity property have the following direct
semicommutative counterpart.

\begin{lemma}\label{lem:N-testing}
Let $1<p<\infty$, $(f_\alpha)_\alpha\in L_p(\NN;\ell_\infty)$ and $f\in L_p(\NN)$.
\begin{enumerate}[label=\textnormal{(\roman*)}]
\item If $1<p<r<\infty$, and $1/p=1/r+1/s$, then, for every $\varepsilon>0$, there is an invertible function $u\in L_s^+(\NN)$ such that
\begin{equation}\label{eq:single-inverse-testing-NN}
 \|u^{-1/2}fu^{-1/2}\|_r\|u\|_s \le2\|f\|_p+\varepsilon,
\end{equation}
where the inverse and multiplication should be understood pointwise. Moreover, if $f$ is positive, then the constant $2$ can be replaced by $1$.

\item If $1<r<p<\infty$ and $1/r=1/p+1/s$, then
\begin{equation}\label{eq:general-maximal-testing-NN}
 \sup_{\substack{ u\in L_s^+(\NN)\\s(u)=1}}
 \frac{
  \big\|{\sup_\alpha}^+u^{1/2}f_\alpha u^{1/2}\big\|_r
 }{\|u\|_s}
 \le\big\|{\sup_\alpha}^+f_\alpha\big\|_p\le
 4\sup_{\substack{ u\in L_s^+(\NN)\\s(u)=1}}
 \frac{
  \big\|{\sup_\alpha}^+u^{1/2}f_\alpha u^{1/2}\big\|_r
 }{\|u\|_s}.
\end{equation}

\end{enumerate}
\end{lemma}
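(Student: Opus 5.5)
The plan is to transfer the proof of Lemma~\ref{lem:maximal-testing} verbatim to $\NN$. All the tools it uses hold for any semifinite algebra: Hölder's inequality, the maximal--summation duality of Lemma~\ref{lem:Lp-l1-dual}, the decomposition in Lemma~\ref{lem:l1-four-positive}, and the module Hölder inequality of Lemma~\ref{lem:vector-holder}. The only place where trace-finiteness entered is the construction of the invertible testing operator $u$ in (i), since there $u_\delta=|x|^{p/s}+q|x|^{p/s}q^*+\delta1$ was required to lie in $L_s$, and $\delta 1\notin L_s(\NN)$. So the proof reduces to replacing $\delta1$ by an integrable invertible perturbation.

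\textbf{Part (i).} Write $f=q|f|$ pointwise and fix a strictly positive scalar function $\phi\in L_s(\R)$, for instance $\phi(t)=e^{-t^2}$. Set
\[
u_\delta=|f|^{p/s}+q|f|^{p/s}q^*+\delta\,\phi\otimes 1_d .
\]
Then $u_\delta\in L_s^+(\NN)$ and $u_\delta(t)$ is invertible for a.e.\ $t$, so $u_\delta^{-1/2}$ is defined pointwise. The Douglas factorization is carried out fiberwise in $M_d(\C)$, which gives pointwise contractions $u_\delta^{-1/2}q|f|^{p/2s}$ and $|f|^{p/2s}u_\delta^{-1/2}$. They are measurable because they arise from continuous functional calculus and products of measurable matrix functions. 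Hence
\[
\|u_\delta^{-1/2}fu_\delta^{-1/2}\|_r\le \||f|^{p/r}\|_r=\|f\|_p^{p/r},
\qquad
\|u_\delta\|_s\le 2\|f\|_p^{p/s}+\delta\|\phi\|_{L_s}\,d^{1/s}.
\]
Letting $\delta\downarrow0$ yields \eqref{eq:single-inverse-testing-NN}. For positive $f$, drop the middle term and take $u_\delta=f^{p/s}+\delta\phi\otimes1_d$, which gives constant $1$.

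\textbf{Part (ii).} The left inequality is Lemma~\ref{lem:vector-holder} with $a=b=u^{1/2}\in L_{2s}(\NN)$. For the right inequality, dualize via Lemma~\ref{lem:Lp-l1-dual} and reduce to positive $(y_\alpha)\in L_{p'}^+(\NN;\ell_1)$ of norm one at a cost of the factor $4$ from Lemma~\ref{lem:l1-four-positive}, exactly as in \eqref{equ:one to four}. Put $\widetilde y=\sum_\alpha y_\alpha$. Then $\|\widetilde y\|_{p'}=1$ by Lemma~\ref{lem:Lp-lp-positive}. Apply part (i) with exponents $r'>p'$, $1/p'=1/r'+1/s$; this is consistent since $1/r=1/p+1/s$. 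It gives an invertible $u$ with $\|u^{-1/2}\widetilde y u^{-1/2}\|_{r'}\|u\|_s\le1+\varepsilon$.

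Next write $\tau(y_\alpha f_\alpha)=\tau(u^{-1/2}y_\alpha u^{-1/2}\cdot u^{1/2}f_\alpha u^{1/2})$. This cyclicity step needs justification because $u^{-1/2}$ is unbounded. I would justify it fiberwise: the integrand is a pointwise identity of matrix traces, and both products are integrable by Hölder. The positive family $(u^{-1/2}y_\alpha u^{-1/2})$ has $\ell_1$-norm $\|u^{-1/2}\widetilde yu^{-1/2}\|_{r'}$ by Lemma~\ref{lem:Lp-lp-positive}. Duality then bounds the pairing by $(1+\varepsilon)\|{\sup}^+u^{1/2}f_\alpha u^{1/2}\|_r/\|u\|_s$. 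Finally, $s(u)=1$ because $u$ is pointwise invertible.

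\textbf{Main obstacle.} I expect the main difficulty to be the measurability and membership issues caused by unbounded inverses in the semifinite setting. One must check that $u^{-1/2}y_\alpha u^{-1/2}$ lies in $L_{r'}(\NN)$, and that the trace identity holds even though $u^{-1}\notin L_0(\NN)$ in general. Both can be handled pointwise in the matrix fibers: the relevant identities are then finite-dimensional, and the integrability follows from Hölder applied to the $S_p^d$-valued functions.
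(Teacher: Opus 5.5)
Your proposal is correct and is the paper's proof: the only change from Lemma~\ref{lem:maximal-testing} is to replace $\delta 1$ by an integrable, strictly positive scalar multiple of $1_d$ (the paper takes $\delta e^{-|t|}1_d$, you take $\delta e^{-t^2}1_d$), and everything else carries over verbatim. Your fiberwise justification of the trace cyclicity and of the measurability of the Douglas contractions spells out details that the paper leaves implicit.
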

\begin{proof}
    The only change from the proof of Lemma~\ref{lem:maximal-testing} is that in (i) a small multiple of $1$
	can no longer be used to make a testing element invertible.  On $\NN$,
	replace that term by
\(
 \delta e^{-|t|}1_d.
\)
\end{proof}

\begin{lemma}\label{N-test-2}
    Let $1\le p<\infty$, $0<u\le v$ be positive invertible strongly measurable functions, and let $(f_\alpha)_{\alpha\in\AA}$ be a family of strongly measurable functions.  If
\(
   (v^{1/2}f_\alpha v^{1/2})_{\alpha\in\AA}
   \in L_p(\NN;\ell_\infty(\AA)),
\)
then
\[
\begin{aligned}
   \big\|
      {\sup_\alpha}^+u^{1/2}f_\alpha u^{1/2}
   \big\|_{p}
   \le
   \big\|
     {\sup_\alpha}^+ v^{1/2}f_\alpha v^{1/2}
   \big\|_{p}
\end{aligned}
\]
Moreover,
\(
   v^{-1}\le u^{-1},
\)
and, if
\(
   (u^{-1/2}f_\alpha u^{-1/2})_{\alpha\in\AA}
   \in L_p(\NN;\ell_\infty(\AA)),
\)
then
\[
\begin{aligned}
   \big\| {\sup_\alpha}^+
      v^{-1/2}f_\alpha v^{-1/2}
   \big\|_{p}
   \le
   \big\|{\sup_\alpha}^+
      u^{-1/2}f_\alpha u^{-1/2}
   \big\|_{p}.
\end{aligned}
\]
\end{lemma}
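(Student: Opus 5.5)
The statement to prove is Lemma~\ref{N-test-2}. I would follow the proof of Lemma~\ref{lem:compare-sandwich}, with the operator factorization done pointwise in the matrix fibers instead of via Douglas factorization in an abstract algebra.

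\textbf{Step 1: the contraction $c$.} For almost every $t$, both $u(t)$ and $v(t)$ are positive definite and $u(t)\le v(t)$. Define
\[
 c(t)=u(t)^{1/2}v(t)^{-1/2}.
\]
Then $c(t)^*c(t)$ is unitarily equivalent to, or rather equal to, $v(t)^{-1/2}u(t)v(t)^{-1/2}$, and this is at most $1_d$ because $u(t)\le v(t)$. Hence $\|c(t)\|_{S_\infty^d}\le 1$.

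The function $c$ is strongly measurable, since the map $A\mapsto A^{\pm1/2}$ is continuous on positive definite matrices. So $c$ is an element of $\NN$ with $\|c\|_\infty\le1$.

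\textbf{Step 2: the first inequality.} Pointwise we have the identity
\[
 u^{1/2}f_\alpha u^{1/2}=c\,\bigl(v^{1/2}f_\alpha v^{1/2}\bigr)\,c^*.
\]
Here the right-hand side is a product of the bounded operator $c$, an element of $L_p(\NN)$, and $c^*$. Apply Lemma~\ref{lem:vector-holder} on the semifinite algebra $\NN$ with $s=\infty$, $a=c$ and $b=c^*$. Its proof only uses Hölder's inequality and common factorizations, so it holds verbatim for semifinite traces.

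Concretely, if $v^{1/2}f_\alpha v^{1/2}=a'y_\alpha b'$ is a common factorization, then $(ca')y_\alpha(b'c^*)$ is a common factorization of the new family. Its norm is at most $\|a'\|_{2p}\sup_\alpha\|y_\alpha\|_\infty\|b'\|_{2p}$. Taking the infimum over factorizations gives the first claim, and in particular shows the family lies in $L_p(\NN;\ell_\infty(\AA))$.

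\textbf{Step 3: the reverse order and the second inequality.} For positive definite matrices, operator monotonicity of $A\mapsto -A^{-1}$ gives $v(t)^{-1}\le u(t)^{-1}$ almost everywhere. The functions $v^{-1}$ and $u^{-1}$ are again positive, invertible and strongly measurable.

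Applying Steps 1--2 to the pair $(v^{-1},u^{-1})$ in place of $(u,v)$ then yields the second estimate. The hypothesis needed is exactly $(u^{-1/2}f_\alpha u^{-1/2})_\alpha\in L_p(\NN;\ell_\infty(\AA))$.

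\textbf{Main obstacle.} The only delicate point is that $u$, $v$ and the $f_\alpha$ need not be $\tau$-measurable operators affiliated with $\NN$. All identities must therefore be read pointwise.

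The key observation handling this is that $c$ genuinely lies in $\NN$. The factorization identity holds fiberwise almost everywhere, and both sides of the conclusion are then honest elements of $L_p(\NN)$. The abstract Douglas factorization is never needed, because finite-dimensional fibers make the inverse square roots available.
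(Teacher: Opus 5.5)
Your proposal is correct and matches the paper's proof: the fiberwise contraction $u(t)^{1/2}v(t)^{-1/2}$ replaces the Douglas factorization from Lemma~\ref{lem:compare-sandwich}, and Lemma~\ref{lem:vector-holder} with $s=\infty$ gives the first inequality. The second inequality then follows, as in the paper, by applying the first part to the reversed pair $(v^{-1},u^{-1})$.
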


\begin{proof}
Compared with Lemma~\ref{lem:compare-sandwich}, the only difference is that $0<u(t)\le v(t)$ implies that $\varphi(t)=u(t)^{1/2}v(t)^{-1/2}$ is a contraction and $u(t)^{1/2}=\varphi(t)v(t)^{1/2}$, where $\varphi$ is strongly measurable.
\end{proof}

To summarize, the unweighted scalar and vector-valued results of this section hold for arbitrary semifinite traces. The affiliated-operator weighted arguments in Subsection~2.2 are presently restricted to trace-finite von Neumann algebras only. But  the pointwise weighted realization and Lemma~\ref{lem:N-testing} apply directly to the concrete algebra $\NN$.

\section{A new matrix weight class and its maximal characterization}\label{sec:new Ap}

The aim of this section is to present a new type of matrix weights, and their characterizations via  noncommutative
weighted  Hardy--Littlewood maximal inequalities. For this purpose, we will relate our new $A_p$ class with the one introduced by Volberg \cite{Volberg1997}.

\subsection{Volberg's matrix weights}
Let us recall the definition of the matrix $A_p$ class given by Volberg. Given a matrix weight $w:\R\to M_{d}(\C)$, $t\in \R$, and $1<p<\infty$, we define a norm by
\[
    \rho_{w,p,t}(u):=\| w^{1/p}(t)u\|_{\ell_2^{d}}, \qquad u\in \ell_2^{d},
\]
and its dual norm by
\[
    \rho_{w,p,t}^*(u):=(\rho_{w,p,t})^*(u)=\sup_{0\neq v\in \ell_2^{d}}\frac{|\langle u,v\rangle_{\ell_2^{d}}|}{\rho_{w,p,t}(v)}.
\]
{It is easy to check $\rho_{w,p,t}^*(u)=\| w^{-1/p}(t)u\|_{\ell_2^{d}}$.}

{Let $\II$ denote the family of bounded intervals in $\R$.} Given $I\in \II$, set
\[
    \rho_{w,p,I}(u):=\Big(\frac{1}{|I|}\int_I \rho_{w,p,t}(u)^pdt\Big)^{1/p}, \qquad \rho_{w,p,I}^*(u):=\Big(\frac{1}{|I|}\int_I \rho_{w,p,t}^*(u)^{p'}dt\Big)^{1/p'}.
\]
Say $w\in A_p(\R;M_{d}(\C))$ if there exists a constant $C>0$, such that
\[
    \rho_{w,p,I}^*\leq C^{1/p}(\rho_{w,p,I})^*
\]
for all $I\in \II$. The least constant $C$ is denoted by $[w]_{A_p(\R;M_{d}(\C))}$. Readers may refer to \cite{Volberg1997} for more details. 

We will use the following quantitative version of Goldberg's maximal inequality given by Isralowitz and Moen in \cite{IsraMoen2019}:
\begin{lemma}\label{lem:quantitative goldberg}
    Let $w:\R\to M_{d}(\C)$ be a matrix weight and let the maximal operator $M_w$ be defined  in \eqref{equ:goldberg maximal}. Then we have the following estimate:
    \begin{equation}\label{equ:Goldberg quantitative}
        \| M_w\|_{L_p(\R;\ell_2^{d})\to L_p(\R)}\le C[w]_{A_p(\R;M_{d}(\C))}^{p'/p}.
    \end{equation}
    Moreover, it is sharp with respect to the exponent $p'/p$ of the $A_p$ characteristic. We denote by $\GG_{p,d}$ the least constant $C$ such that \eqref{equ:Goldberg quantitative} holds for every $w\in A_p(\R;M_d(\C))$.
\end{lemma}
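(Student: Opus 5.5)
Since the paper takes this lemma from \cite{IsraMoen2019}, I would only reconstruct the argument in outline. The approach is to dominate $M_w$ by scalar dyadic objects through \emph{reducing operators}, and then run a Buckley-type sparse argument.

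\textbf{Reducing operators.} For each $I\in\II$, John's ellipsoid theorem applied to the norms $\rho_{w,p,I}$ and $\rho_{w,p,I}^*$ gives positive matrices $V_I,V_I'$ such that
\[
 |V_Iu|\le\rho_{w,p,I}(u)\le\sqrt d\,|V_Iu|,\qquad |V_I'u|\le\rho_{w,p,I}^*(u)\le\sqrt d\,|V_I'u|.
\]
In this language the condition $w\in A_p(\R;M_d(\C))$ becomes $\|V_IV_I'\|\lesssim_d[w]^{1/p}_{A_p(\R;M_d(\C))}$, uniformly in $I$.

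\textbf{Dyadic reduction.} By the one-third trick, $M_w$ is dominated pointwise by a sum over three dyadic grids $\mathcal D^j$ of dyadic versions $M_w^{\mathcal D^j}$. So it suffices to treat one dyadic grid $\mathcal D$. For $t\in Q\in\mathcal D$, I would insert $V_Q^{-1}V_Q$ and write
\[
 \frac1{|Q|}\int_Q|w^{1/p}(t)w^{-1/p}(s)f(s)|\,ds\le\|w^{1/p}(t)V_Q^{-1}\|\,\frac1{|Q|}\int_Q\|V_Qw^{-1/p}(s)\|\,|f(s)|\,ds .
\]
Two estimates then control the two factors:
\begin{itemize}
 \item the scalar functions $t\mapsto\|w^{1/p}(t)V_Q^{-1}\|^p$ have $Q$-average $\lesssim_d1$;
 \item by $A_p$ and duality, $\bigl(\langle\|V_Qw^{-1/p}\|^{p'}\rangle_Q\bigr)^{1/p'}\lesssim_d[w]^{1/p}$.
\end{itemize}
Next I would linearize the supremum by a stopping-time (sparse) selection of cubes. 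This reduces the $L_p$ bound to a Carleson-embedding estimate of the form $\sum_{Q\in\mathcal S}\langle g_Q\rangle_Q^p\,|E_Q|$, where $\mathcal S$ is a sparse family and $E_Q$ are its disjoint pieces.

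\textbf{Main obstacle: the sharp exponent.} Plain Hölder only gives $[w]^{1}$-type bounds with the wrong power. To reach $p'/p$, I would use a matrix reverse Hölder inequality. The scalar weights $\|w^{-1/p}(\cdot)e\|^{p'}$ (for $e$ ranging over a basis adapted to $V_Q'$) are uniformly scalar $A_\infty$, with characteristic controlled by $[w]$. This lets me replace $p'$-averages by slightly larger $p'(1+\epsilon)$-averages, where $\epsilon\sim[w]^{-(p'-1)}$. After that, the scalar maximal operator is applied on $L_{p/(1-\delta)}$-type spaces, whose operator norm blows up like $\epsilon^{-1/p}$. Combining this blow-up with the factor $[w]^{1/p}$ produces $[w]^{1/p+(p'-1)/p}=[w]^{p'/p}$, as in Buckley's scalar proof. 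I expect the bookkeeping of the $d$-dependent constants from John's theorem to be routine.

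\textbf{Sharpness.} For sharpness in the exponent, I would take $d=1$ and $w=w_0\,\mathrm{Id}$ with Buckley's power weight $w_0(t)=|t|^{(1-\delta)(p-1)}$ and $f=|t|^{\delta-1}\chi_{[0,1]}$. For this choice $M_w$ reduces to the scalar $M$ acting on $L_p(w_0)$, and letting $\delta\to0$ shows the exponent $p'/p$ cannot be lowered.
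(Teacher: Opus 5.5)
The paper gives no argument for this lemma; it is quoted from \cite{IsraMoen2019}, so your outline has to carry the proof on its own. Much of it is fine: the John-ellipsoid reducing operators, the reformulation $\|V_IV_I'\|\lesssim_d[w]^{1/p}$, the one-third trick, the two averaged bounds, the reverse H\"older bookkeeping $\epsilon\sim[w]^{-(p'-1)}$, and Buckley's example (use $w_0\cdot 1_d$ to get sharpness for every $d$).

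\textbf{The gap.} It sits in the one sentence ``linearize the supremum by a stopping-time (sparse) selection of cubes''. With your factorization through $V_Q^{-1}V_Q$, the outer factor $\|w^{1/p}(t)V_Q^{-1}\|$ depends on $Q$. When you replace a cube $Q\ni t$ by its stopping ancestor $S$, you need $\|w^{1/p}(t)V_Q^{-1}\|\lesssim\|w^{1/p}(t)V_S^{-1}\|$, i.e.\ control of $\|V_SV_Q^{-1}\|$ inside the tree of $S$. That norm is large exactly when $\rho_{w,p,Q}(e)$ drops far below $\rho_{w,p,S}(e)$ for some $e$. Cubes on which averages \emph{drop} are not sparse for free. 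You need either an $A_\infty$ property of the scalar weights $|w^{1/p}e|^p$, or $|V_Q^{-1}u|\lesssim_d\rho^*_{w,p,Q}(u)$ combined with $\|V_S'V_S\|\lesssim_d[w]^{1/p}$. Either way the $A_p$ constant is spent a second time, and the natural outcome is $[w]^{(p'+1)/p}$, not $[w]^{p'/p}$.

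Your count $[w]^{1/p}\cdot\epsilon^{-1/p}=[w]^{p'/p}$ tacitly assumes $\frac1{|S|}\int_S\sup_{Q\ni t,\,Q\subset S}\|w^{1/p}(t)V_Q^{-1}\|^p\,dt\lesssim_d1$. This is false already for $d=1$. There it reads $\frac1{|S|}\int_S w(t)\sup_{Q\ni t,\,Q\subset S}\langle w\rangle_Q^{-1}\,dt$, an $A_\infty$-type quantity that is unbounded over $A_p$ weights. For instance, for finite dyadic Riesz products $\prod_{j\le n}(1+\gamma r_j)$ on $S=[0,1]$, with $r_j$ the Rademacher functions, it grows like $n\gamma^2$.

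\textbf{How to repair it.} Factor through the dual reducing operator instead. For fixed $t$, apply H\"older and reverse H\"older in $s$; only the scalar weights $|w^{-1/p}u|^{p'}$ with $u=w^{1/p}(t)e_j$ enter. This gives $\langle|w^{1/p}(t)w^{-1/p}f|\rangle_Q\lesssim_d\|w^{1/p}(t)V_Q'\|\,\langle|f|^q\rangle_Q^{1/q}$ with $q'=p'(1+\epsilon)$.

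Now stop only on \emph{increases}: the children of $S$ are the maximal $Q\subsetneq S$ with $\langle|f|^q\rangle_Q>2\langle|f|^q\rangle_S$ or $\|V_Q'(V_S')^{-1}\|>\lambda_d$. The second family is sparse with a purely dimensional threshold. Indeed $\|V_Q'(V_S')^{-1}\|^{p'}\le\langle\|w^{-1/p}(V_S')^{-1}\|^{p'}\rangle_Q$, and the $S$-average of the right side is $\lesssim_d1$ by the definition of $V_S'$ alone, with no $A_p$ input.

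This yields $M^{\mathcal D}_wf(t)\lesssim_d\sup_{S\ni t}\|w^{1/p}(t)V_S'\|\,\langle|f|^q\rangle_S^{1/q}$. The $A_p$ constant now enters exactly once, through $\frac1{|S|}\int_S\|w^{1/p}V_S'\|^p\lesssim_d\|V_SV_S'\|^p\lesssim_d[w]$. The sparse Carleson sum is controlled by the scalar maximal operator on $L_{p/q}$ with $p/q\downarrow1$ (not on ``$L_{p/(1-\delta)}$''). Its norm $\approx\epsilon^{-1}$ produces the factor $\epsilon^{-1/p}$, and hence the bound $[w]^{1/p}\epsilon^{-1/p}\lesssim[w]^{p'/p}$ that your bookkeeping was aiming for.
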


\begin{remark}
    Following the proof given in \cite{IsraMoen2019}, one may check that, for instance, $\GG_{p,d}\lesssim_p d^{7/2}$. But the authors believe that this exponent $7/2$ is far from optimal.
\end{remark}

\subsection{The new matrix \texorpdfstring{$A_p$}{Ap} class on \texorpdfstring{$\NN$}{N}}
We give our new matrix $A_p$ class as follows, which is motivated by Volberg's definition above. {Given a matrix weight $w:\R\to M_{d}(\C)$, $t\in \R$, and $1<p<\infty$}, we define a norm on $M_d(\C)$ by
\begin{equation}\label{eq:matrix-pointwise-metric}
 \rho_{w,p,t}(A)=\|w(t)^{1/2p}Aw(t)^{1/2p}\|_{S_p^d},
 \qquad A\in M_d(\C),
\end{equation}
and define its dual by
\[
\rho_{w,p,t}^*(A):=(\rho_{w,p,t})^*(A):=\sup_{0\neq B\in M_d(\C)}\frac{|\tr(B^*A)|}{\rho_{w,p,t}(B)}
\]
H\"older's inequality and the positivity of $w$ give
\[
 \rho_{w,p,t}(A)^p\le \|A\|_{S_\infty^d}^p\tr(w(t)),
\]
so $\rho_{w,p,t}(A)^p$ is locally integrable.

\begin{lemma}\label{lem:matrix-pointwise-dual}
  For almost every $t\in\R$ and every $A\in M_d(\C)$,
\begin{equation}\label{eq:matrix-pointwise-dual}
 \rho_{w,p,t}^*(A)
 =\|w(t)^{-1/2p}Aw(t)^{-1/2p}\|_{S_{p'}^d}.
\end{equation}
\end{lemma}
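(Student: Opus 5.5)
Fix $t$ outside the null set where $w(t)$ fails to be positive definite, and write $W=w(t)$, which is then positive definite and invertible. The plan is to reduce the dual norm to the standard trace duality $(S_p^d)^*=S_{p'}^d$ through the invertible linear change of variables $B\mapsto C=W^{1/2p}BW^{1/2p}$ on $M_d(\C)$. This map is bijective, with inverse $C\mapsto W^{-1/2p}CW^{-1/2p}$. Hence, by definition,
\[
\rho_{w,p,t}^*(A)=\sup_{0\neq C\in M_d(\C)}\frac{|\tr\big((W^{-1/2p}CW^{-1/2p})^*A\big)|}{\|C\|_{S_p^d}} .
\]

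Next I move the weights onto $A$. Since $W$ is self-adjoint, $(W^{-1/2p}CW^{-1/2p})^*=W^{-1/2p}C^*W^{-1/2p}$. Cyclicity of the trace in finite dimensions then gives
\[
\tr(W^{-1/2p}C^*W^{-1/2p}A)=\tr\big(C^*\,W^{-1/2p}AW^{-1/2p}\big).
\]
Consequently $\rho_{w,p,t}^*(A)$ equals the dual norm of $D:=W^{-1/2p}AW^{-1/2p}$ with respect to the $S_p^d$ norm under the pairing $(C,D)\mapsto\tr(C^*D)$.

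The last step is the finite-dimensional duality $\sup_{\|C\|_{S_p^d}\le1}|\tr(C^*D)|=\|D\|_{S_{p'}^d}$ for $1<p<\infty$. The upper bound follows from H\"older's inequality \eqref{eq:prelim-holder} applied in $M_d(\C)$ with its trace. For the lower bound, write the polar decomposition $D=U|D|$ and choose $C=U|D|^{p'-1}/\|D\|_{S_{p'}^d}^{p'-1}$. Since $p(p'-1)=p'$, this $C$ has $\|C\|_{S_p^d}=1$, and it gives $\tr(C^*D)=\tr(|D|^{p'})/\|D\|_{S_{p'}^d}^{p'-1}=\|D\|_{S_{p'}^d}$ (assuming $D\neq0$; the case $D=0$ is trivial). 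Combining the three steps gives \eqref{eq:matrix-pointwise-dual}.

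No step is a real obstacle. The only care needed is to check that the substitution is a bijection of $M_d(\C)\setminus\{0\}$, which is exactly where invertibility of $w(t)$ is used, and hence why the statement holds only almost everywhere. The adjoint must also be tracked correctly through the substitution.
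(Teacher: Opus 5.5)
Your proposal is correct and is essentially the paper's proof: it uses the same bijective substitution $C=w(t)^{1/2p}Bw(t)^{1/2p}$ and finite-dimensional trace cyclicity, and then appeals to $S_p^d$--$S_{p'}^d$ duality. The only difference is that you verify that duality explicitly with the polar-decomposition extremizer, which the paper takes as standard.
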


\begin{proof}
Fix $t$ such that $w(t)>0$.  The change of variables
$C=w(t)^{1/2p}Bw(t)^{1/2p}$ is bijective on $M_d(\C)$, and finite-dimensional
trace cyclicity gives
\[
 \tr(B^*A)
 =\tr\bigl(C^*w(t)^{-1/2p}Aw(t)^{-1/2p}\bigr).
\]
Taking the supremum over $C$ and using the duality between $S_p^d$ and
$S_{p'}^d$ proves the formula.  The required assumption holds almost
everywhere by the definition of a matrix weight.
\end{proof}

For $I\in\II$, set
\begin{equation}\label{eq:matrix-averaged-metrics}
 \rho_{w,p,I}(A)
 =\left(\frac1{|I|}\int_I\rho_{w,p,t}(A)^p\,dt\right)^{1/p},\qquad 
 \rho_{w,p,I}^*(A)
 =\left(\frac1{|I|}\int_I\rho_{w,p,t}^*(A)^{p'}\,dt\right)^{1/p'}.
\end{equation}
The second quantity is allowed to be infinite.

\begin{lemma}\label{lem:matrix-averaged-duality}
For every $I\in\II$ and $A\in M_d(\C)$,
\begin{equation}\label{eq:matrix-one-sided-geometry}
 (\rho_{w,p,I})^*(A)\le \rho_{w,p,I}^*(A).
\end{equation}
\end{lemma}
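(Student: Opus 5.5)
We must show that $(\rho_{w,p,I})^*(A)\le\rho^*_{w,p,I}(A)$. By definition,
\[
(\rho_{w,p,I})^*(A)=\sup_{0\neq B\in M_d(\C)}\frac{|\tr(B^*A)|}{\rho_{w,p,I}(B)}.
\]
So it suffices to prove, for every nonzero $B\in M_d(\C)$, the bound
\[
|\tr(B^*A)|\le \rho_{w,p,I}(B)\,\rho^*_{w,p,I}(A).
\]
If $\rho^*_{w,p,I}(A)=\infty$ there is nothing to prove. We may also assume $\rho_{w,p,I}(B)>0$. This holds automatically for $B\ne0$, since $\rho_{w,p,t}$ is a norm for almost every $t$ and $I$ has positive measure.

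The plan is to average the pointwise duality and then apply H\"older's inequality in $t$. Since $A$ and $B$ are constant in $t$, we can write
\[
\tr(B^*A)=\frac1{|I|}\int_I\tr(B^*A)\,dt.
\]
For almost every $t$, the definition of the dual norm $\rho^*_{w,p,t}$ gives the pointwise bound
\[
|\tr(B^*A)|\le\rho_{w,p,t}(B)\,\rho^*_{w,p,t}(A).
\]
Lemma~\ref{lem:matrix-pointwise-dual} identifies this dual norm with $\|w(t)^{-1/2p}Aw(t)^{-1/2p}\|_{S^d_{p'}}$. That identification is only needed for measurability; alternatively one can check the bound directly by inserting $w(t)^{\pm1/2p}$, using trace cyclicity, and applying Schatten--H\"older. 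Integrating the pointwise bound over $I$ and applying H\"older's inequality with exponents $p,p'$ to the normalized measure $dt/|I|$ yields
\[
|\tr(B^*A)|\le\Big(\frac1{|I|}\int_I\rho_{w,p,t}(B)^p\,dt\Big)^{1/p}\Big(\frac1{|I|}\int_I\rho^*_{w,p,t}(A)^{p'}\,dt\Big)^{1/p'}=\rho_{w,p,I}(B)\,\rho^*_{w,p,I}(A).
\]
Dividing by $\rho_{w,p,I}(B)$ and taking the supremum over $B$ gives \eqref{eq:matrix-one-sided-geometry}.

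The only point requiring care is measurability of $t\mapsto\rho_{w,p,t}(B)$ and $t\mapsto\rho^*_{w,p,t}(A)$. This follows from strong measurability of $w$, continuity of the functional calculus on positive definite matrices, and the explicit formula in Lemma~\ref{lem:matrix-pointwise-dual}. Together with local integrability of $\rho_{w,p,t}(B)^p$, noted just before Lemma~\ref{lem:matrix-pointwise-dual}, this makes all the integrals above meaningful. There is no real obstacle beyond this bookkeeping.
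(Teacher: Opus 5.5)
Your proof is correct and follows the paper's argument: pointwise duality $|\tr(B^*A)|\le\rho_{w,p,t}(B)\,\rho^*_{w,p,t}(A)$, averaging over $I$, H\"older with exponents $p,p'$, and then the supremum over $B\neq 0$. Your remarks on the positivity of $\rho_{w,p,I}(B)$ and on measurability are sound bookkeeping that the paper leaves implicit.
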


\begin{proof}
Pointwise duality gives
$|\tr(B^*A)|\le\rho_{w,p,t}(B)\rho_{w,p,t}^*(A)$.  If
$\rho_{w,p,I}^*(A)=\infty$, there is nothing to prove.  Otherwise, 
H\"older's inequality gives
\[
 |\tr(B^*A)|
 \le \frac{1}{|I|}\int_I\rho_{w,p,t}(B)\rho_{w,p,t}^*(A) dt\leq \rho_{w,p,I}(B)\rho_{w,p,I}^*(A).
\]
Taking the supremum over $B\ne0$ proves
\eqref{eq:matrix-one-sided-geometry}.
\end{proof}

\begin{definition}\label{def:matrix-Ap-global-local}
We say that $w\in A_p(\NN)$ if there is a constant $C\ge1$ such that
\begin{equation}\label{eq:matrix-Ap-global}
 \rho_{w,p,I}^*(A)
 \le C^{1/p}(\rho_{w,p,I})^*(A)
\end{equation}
for all $I\in\II$ and $A\in M_d(\C)$.  The least  constant $C$ such that \eqref{eq:matrix-Ap-global} holds is denoted by $[w]_{A_p(\NN)}$.
\end{definition}

Given $w\in A_p(\NN)$ and $A\in M_d(\C)$, Definition~\ref{def:matrix-Ap-global-local} ensures that $\rho^*_{w,p,t}(A)^{p'}$ is locally integrable and hence $w^{-p'/p}$ is also a matrix weight. The geometric characteristic has the exact duality required later.

\begin{proposition}\label{prop:matrix-Ap-dual}
Given $1<p<\infty$, then $w\in A_p(\NN)$ is equivalent to $w^{-p'/p}\in A_{p'}(\NN)$. Moreover, quantitatively,
\begin{equation}\label{eq:Ap-dual-characteristic}
 [w^{-p'/p}]_{A_{p'}(\NN)}
 =[w]_{A_p(\NN)}^{p'/p}.
\end{equation}
\end{proposition}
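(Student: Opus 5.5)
Set $v=w^{-p'/p}$ and $q=p'$, so that $q'=p$ and $v^{-q'/q}=v^{-p/p'}=w$. The plan is to show that the pointwise norms of $v$ at exponent $p'$ are exactly the pointwise dual norms of $w$ at exponent $p$, and conversely. Then the averaged quantities in \eqref{eq:matrix-averaged-metrics} swap roles, and the $A_{p'}(\NN)$ condition for $v$ becomes a dualized form of the $A_p(\NN)$ condition for $w$.

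\textbf{Step 1 (pointwise identification).} Fix $t$ with $w(t)>0$. Since $v(t)^{1/2p'}=w(t)^{-1/2p}$, Lemma~\ref{lem:matrix-pointwise-dual} gives
\[
\rho_{v,p',t}(A)=\|w(t)^{-1/2p}Aw(t)^{-1/2p}\|_{S_{p'}^d}=\rho^*_{w,p,t}(A).
\]
Applying Lemma~\ref{lem:matrix-pointwise-dual} to $v$ at exponent $p'$, using $v^{-1/2p}=w^{1/2p}$, gives $\rho^*_{v,p',t}=\rho_{w,p,t}$. Averaging with exponents $p'$ and $(p')'=p$ then yields
\[
\rho_{v,p',I}=\rho^*_{w,p,I},\qquad \rho^*_{v,p',I}=\rho_{w,p,I}\quad\text{for all } I\in\II.
\]

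\textbf{Step 2 (duality of the inequality).} Suppose $w\in A_p(\NN)$ with $C=[w]_{A_p(\NN)}$. Then $\rho^*_{w,p,I}\le C^{1/p}(\rho_{w,p,I})^*$ on $M_d(\C)$. Both sides are norms on the finite-dimensional space $M_d(\C)$ under the pairing $\tr(B^*A)$; finiteness of the left side follows from the remark after Definition~\ref{def:matrix-Ap-global-local}. Taking duals reverses inequalities, and the bidual of a norm is the norm itself, so
\[
\rho_{w,p,I}\le C^{1/p}(\rho^*_{w,p,I})^*.
\]
In the notation of $v$ this reads $\rho^*_{v,p',I}\le (C^{p'/p})^{1/p'}(\rho_{v,p',I})^*$. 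Hence $v\in A_{p'}(\NN)$ with $[v]_{A_{p'}(\NN)}\le [w]_{A_p(\NN)}^{p'/p}$.

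One technical point is that $v$ must be a matrix weight, i.e.\ locally integrable. This holds because $\rho^*_{w,p,t}(1_d)^{p'}=\tr(w(t)^{-p'/p})$ is locally integrable, again by the remark after Definition~\ref{def:matrix-Ap-global-local}.

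\textbf{Step 3 (converse and equality).} Apply Step 2 to $v$ in place of $w$, with exponent $p'$ and $v^{-p/p'}=w$. This gives $[w]_{A_p(\NN)}\le [v]_{A_{p'}(\NN)}^{p/p'}$, which is the reverse inequality and proves \eqref{eq:Ap-dual-characteristic}.

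The main obstacle is Step 2: making sure the dualization is legitimate, namely that $\rho^*_{w,p,I}$ is a genuine finite norm so that the bidual identity $(\rho^{*})^{*}=\rho$ applies, and keeping the exponent bookkeeping $C^{1/p}=(C^{p'/p})^{1/p'}$ straight. Everything else follows from Lemma~\ref{lem:matrix-pointwise-dual}.
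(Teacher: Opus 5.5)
Your proposal is correct and is essentially the paper's proof: you identify the pointwise norms of $w^{-p'/p}$ at exponent $p'$ with $\rho^*_{w,p,t}$ and $\rho_{w,p,t}$ via Lemma~\ref{lem:matrix-pointwise-dual}, dualize the defining inequality on the finite-dimensional space $M_d(\C)$ with the same root constant $C^{1/p}=(C^{p'/p})^{1/p'}$, and rerun the argument starting from $w^{-p'/p}$ to get the reverse inequality. The only slip is a typo in Step 1, where the identity needed is $v^{-1/2p'}=w^{1/2p}$ rather than $v^{-1/2p}=w^{1/2p}$; your remarks on the finiteness of $\rho^*_{w,p,I}$ and the local integrability of $w^{-p'/p}$ fill in details that the paper leaves implicit.
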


\begin{proof}
By Lemma~\ref{lem:matrix-pointwise-dual}, the pointwise norm
associated with $w^{-p'/p}$ at exponent $p'$ is $\rho_{w,p,t}^*$, and its
dual is $\rho_{w,p,t}$. The defining inequality is
\[
 \rho_{w,p,I}^*\le [w]_{A_p(\NN)}^{1/p}(\rho_{w,p,I})^*.
\]
Dualizing this inequality between finite-dimensional normed spaces gives
\[
 \rho_{w,p,I}\le [w]_{A_p(\NN)}^{1/p}(\rho_{w,p,I}^*)^*,
\]
which is the $A_{p'}$ inequality for $w^{-p'/p}$ with the same root
constant. Repeating the argument with $w^{-p'/p}$ gives the converse and shows $[w]_{A_p(\NN)}^{1/p}=[w^{-p'/p}]_{A_{p'}(\NN)}^{1/p'}$, which is
\eqref{eq:Ap-dual-characteristic}.
\end{proof}

In this section, the key idea is that one may relate $w\in A_p(\mathcal N)$ with Volberg's matrix weight $\mathbb W\in A_p(\mathbb R;M_{d^2}(\mathbb C))$ in the following way
\begin{equation}\label{equ:mathbb W}
    \mathbb W(t): A\longmapsto w^{1/2}(t)Aw^{1/2}(t),\qquad A\in S^d_2(\C).
\end{equation}
Then by functional calculus, we have
\begin{equation}\label{eq:operator-weight-powers}
    \mathbb W^{\pm 1/p}(t)(A)= w^{\pm1/2p}(t)Aw^{\pm1/2p}(t),\qquad A\in S_2^d(\C).
\end{equation}

Based on this observation, we can compare our $A_p(\NN)$ class with Volberg's $A_p(\R;M_{d^2}(\C))$ class as follows.
\begin{lemma}\label{lem:compare N and ell_2^d2}
    Given a matrix weight $w:\R\to M_d(\C)$, consider the matrix weight $\mathbb W$ defined by \eqref{equ:mathbb W}. Then we have
    \begin{equation}
        d^{-|1-\frac{p}{2}|}[\mathbb W]_{A_p(\R;M_{d^2}(\C))}\leq [w]_{A_p(\NN)}\leq d^{|1-\frac{p}{2}|}[\mathbb W]_{A_p(\R;M_{d^2}(\C))}.
    \end{equation}
\end{lemma}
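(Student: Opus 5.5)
The plan is to compare the pointwise norms and their duals, and then pass the comparison through the averaging and the dualization in the two $A_p$ definitions. Identify $M_d(\C)$ with $S_2^d\cong\ell_2^{d^2}$; under this identification $\tr(B^*A)$ is exactly the Hilbert inner product. By \eqref{eq:operator-weight-powers}, Volberg's pointwise norm for $\mathbb W$ is
\[
\rho^{\mathbb W}_{p,t}(A)=\|w(t)^{1/2p}Aw(t)^{1/2p}\|_{S_2^d},
\]
and its dual is $\|w(t)^{-1/2p}Aw(t)^{-1/2p}\|_{S_2^d}$. By \eqref{eq:matrix-pointwise-metric} and Lemma~\ref{lem:matrix-pointwise-dual}, ours are the same expressions with $S_p^d$ and $S_{p'}^d$ in place of $S_2^d$. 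So everything reduces to the elementary finite-dimensional comparisons
\[
d^{-a}\|X\|_{S_2^d}\le\|X\|_{S_q^d}\le d^{b}\|X\|_{S_2^d},
\]
where the exponents are $\tfrac1q-\tfrac12$ on one side (for $q\le2$) or $\tfrac12-\tfrac1q$ on the other (for $q\ge2$), and zero on the other side in each case.

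\textbf{Step 1.} Record pointwise constants:
\[
c_1\rho^{\mathbb W}_{p,t}\le\rho_{w,p,t}\le c_2\rho^{\mathbb W}_{p,t},\qquad c_1'(\rho^{\mathbb W}_{p,t})^*\le\rho^*_{w,p,t}\le c_2'(\rho^{\mathbb W}_{p,t})^*.
\]
When $p\ge2$ we get $c_1=d^{1/p-1/2}$, $c_2=1$, and for $p'\le2$ we get $c_1'=1$, $c_2'=d^{1/p'-1/2}=d^{1/2-1/p}$; the case $p<2$ is symmetric.

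\textbf{Step 2.} Average: the $L_p$-averages inherit the constants $c_1,c_2$, and the $L_{p'}$-averages inherit $c_1',c_2'$.

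\textbf{Step 3.} Dualize: if $\rho_I\le c_2\rho_I^{\mathbb W}$ then $(\rho_I^{\mathbb W})^*\le c_2(\rho_I)^*$, and similarly for the other side.

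\textbf{Step 4.} Chain the inequalities. Assuming Volberg's condition $\rho^{\mathbb W*}_I\le[\mathbb W]^{1/p}(\rho_I^{\mathbb W})^*$, we get
\[
\rho^*_{w,p,I}\le c_2'\rho^{\mathbb W*}_I\le c_2'[\mathbb W]^{1/p}(\rho^{\mathbb W}_I)^*\le c_2'c_1^{-1}[\mathbb W]^{1/p}(\rho_{w,p,I})^*.
\]
For $p\ge2$ the constant here is $c_2'/c_1=d^{(1/2-1/p)}d^{(1/2-1/p)}=d^{1-2/p}$. Raising to the $p$-th power gives $[w]_{A_p(\NN)}\le d^{p-2}[\mathbb W]$, and $p-2=2|1-p/2|$. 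This is a factor of 2 worse in the exponent than the claimed $d^{|1-p/2|}$.

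That loss is the main obstacle: the naive chain uses the $d$-loss twice, once for the norm and once for its dual. To fix it I would try one of two things.

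\emph{Option 1.} Exploit that only one of the two comparisons is lossy in each direction. For $p\ge2$ we have $\rho_{w,p,t}\le\rho^{\mathbb W}_{p,t}$ at no cost, so $(\rho^{\mathbb W}_I)^*\le(\rho_{w,p,I})^*$ at no cost. Only the dual side then loses $d^{1/2-1/p}$, giving the constant $d^{p(1/2-1/p)}=d^{p/2-1}=d^{|1-p/2|}$ after raising to the $p$-th power.

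Checking this: $\rho^*_{w,p,I}$ uses $S_{p'}$ with $p'\le2$, so $\|\cdot\|_{S_{p'}}\le d^{1/p'-1/2}\|\cdot\|_{S_2}$, giving $c_2'=d^{1/2-1/p}$. The chain is then
\[
\rho^*_{w,p,I}\le d^{1/2-1/p}\rho^{\mathbb W*}_I\le d^{1/2-1/p}[\mathbb W]^{1/p}(\rho^{\mathbb W}_I)^*\le d^{1/2-1/p}[\mathbb W]^{1/p}(\rho_{w,p,I})^*,
\]
since $\rho_I\le\rho_I^{\mathbb W}$. Thus $[w]\le d^{p/2-1}[\mathbb W]$, exactly as claimed. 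Each direction uses one lossless comparison and one lossy one.

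The reverse inequality is handled the same way: use $\rho^{\mathbb W*}_{I}\le\rho^*_{w,p,I}$ (lossless, since $\|\cdot\|_{S_2}\le\|\cdot\|_{S_{p'}}$) and $(\rho_{w,p,I})^*\le d^{1/2-1/p}(\rho^{\mathbb W}_I)^*$.

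\emph{Option 2.} For $p<2$, swap roles, or apply Proposition~\ref{prop:matrix-Ap-dual} together with its Volberg analogue.

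Option 1 is the route I would take, with direct bookkeeping of which Schatten embedding is contractive. The remaining routine point is to verify that the exponent comes out as $|1-p/2|$ in the case $p<2$ as well.
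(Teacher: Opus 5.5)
Your Option~1 argument is correct and is essentially the paper's proof. Both compare the $S_p$ (resp.\ $S_{p'}$) norm with the $S_2$ norm pointwise, pass this through averaging and dualization, and chain. The paper handles both regimes at once via $\kappa_p(d)=d^{\max\{0,\frac1p-\frac12\}}$ and $\eta_p(d)=d^{\min\{0,\frac1p-\frac12\}}$, giving the constant $\kappa_p(d)\eta_p(d)^{-1}=d^{|\frac1p-\frac12|}$ before raising to the $p$-th power.

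The ``double loss'' in your Step~4 was only a dualization slip, not a genuine obstacle. The inequality $c_1\rho^{\mathbb W}_{p,I}\le\rho_{w,p,I}$ gives $(\rho_{w,p,I})^*\le c_1^{-1}(\rho^{\mathbb W}_{p,I})^*$, not the reverse. The correct chain therefore uses $c_2'c_2$ from the start, which is exactly $\eta_p(d)^{-1}\kappa_p(d)$.
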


\begin{proof}
Define 
\[
        \kappa_p(d)=d^{\max\{0,\frac{1}{p}-\frac{1}{2}\}},\qquad \eta_p(d)=d^{\min\{0,\frac{1}{p}-\frac{1}{2}\}}.
    \]
    On the one hand, we have
    \[
        \rho_{w,p,t}(A)=\|w(t)^{1/2p}Aw(t)^{1/2p}\|_{S_p^d}\leq \kappa_p(d)\| \mathbb W^{1/p}(t)A\|_{\ell_2^{d^2}}=\kappa_p(d)\rho_{\mathbb W,p,t}(A),
    \]
    \[
        \rho_{w,p,t}(A)\geq \eta_p(d)\| \mathbb W^{1/p}(t)A\|_{\ell_2^{d^2}}=\eta_p(d)\rho_{\mathbb W,p,t}(A),
    \]
    thus for any $I\in\II$,
    \[
        \eta_p(d)\rho_{\mathbb W,p,I}(A)\leq \rho_{w,p,I}(A)=\Big(\frac{1}{|I|}\int_I \rho_{w,p,t}(A)^pdt \Big)^{1/p}\leq \kappa_p(d)\rho_{\mathbb W,p,I}(A).
    \]
    Taking dual norms gives
    \[
        \kappa_p^{-1}(d)(\rho_{\mathbb W,p,I})^*(A)\leq (\rho_{w,p,I})^*(A)\leq \eta_p^{-1}(d)(\rho_{\mathbb W,p,I})^{*}(A).
    \]
    On the other hand, $\eta_p(d)\rho_{\mathbb W,p,t}(A)\leq\rho_{w,p,t}(A) \leq \kappa_p(d)\rho_{\mathbb W,p,t}(A)$ gives that
    \[
        \kappa_p^{-1}(d)\rho^*_{\mathbb W,p,t}(A)\leq\rho^*_{w,p,t}(A) \leq \eta^{-1}_p(d)\rho^*_{\mathbb W,p,t}(A),
    \]
    thus for any $I\in\II$, we have
    \[
        \kappa_p^{-1}(d)\rho^*_{\mathbb W,p,I}(A)\leq\rho^*_{w,p,I}(A) \leq \eta^{-1}_p(d)\rho^*_{\mathbb W,p,I}(A).
    \]
    Combining the argument above, we have
    \[
        [w]^{1/p}_{A_p(\NN)}=\sup_{I\in\II}\sup_{0\neq A} \frac{\rho_{w,p,I}^*(A)}{(\rho_{w,p,I})^*(A)}\leq \eta^{-1}_p(d)\kappa_p(d)[\mathbb W]^{1/p}_{A_p(\R;M_{d^2}(\C))}=d^{|\frac{1}{p}-\frac{1}{2}|}[\mathbb W]^{1/p}_{A_p(\R;M_{d^2}(\C))}.
    \]
    Similarly, we have $[w]_{A_p(\NN)}\geq d^{-|1-\frac{p}{2}|}[\mathbb W]_{A_p(\R;M_{d^2}(\C))}$, which gives the estimate as desired.
\end{proof}

\subsection{Averaging operators}

Given $I\in\II$, define the positive linear operator
\begin{equation}\label{eq:matrix-averaging-operator}
    \begin{aligned}
        A_I:\NN &\longrightarrow \NN \\
        f&\longmapsto \langle f\rangle_I\cdot \chi_I
    \end{aligned}
\end{equation}
where $\langle f\rangle_I:=\frac{1}{|I|}\int_I f(t)dt$ denotes the integral average of $f$ over $I$. 

\begin{lemma}\label{lem:one-average}
Let $w$ be a matrix weight. Given $I\in\II$, the weighted norm of the averaging operator $A_I$ is 
\begin{equation}\label{eq:one-average-norm}
 \|A_I\|_{L_p(\NN,w)\to L_p(\NN,w)}
 =\sup_{A\ne0}\frac{\rho_{w,p,I}^*(A)}{(\rho_{w,p,I})^*(A)},
\end{equation}
Consequently,
\begin{equation}\label{eq:single-averages-characteristic}
 \sup_{I\in\II}\|A_I\|_{L_p(\NN,w)\to L_p(\NN,w)}
 =[w]_{A_p(\NN)}^{1/p}.
\end{equation}
If $w\notin A_p(\NN)$, we may regard $[w]_{A_p(\NN)}$ as infinity.
\end{lemma}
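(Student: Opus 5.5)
We compute the operator norm of $A_I$ on $L_p(\NN,w)$ directly and recognise it as the norm of a rank-one functional/vector pair. Write $\langle f\rangle_I=\frac1{|I|}\int_I f$. Since $A_If=\langle f\rangle_I\chi_I$ is constant on $I$, we get
\[
\|A_If\|_{L_p(\NN,w)}^p=\int_I\rho_{w,p,t}(\langle f\rangle_I)^p\,dt=|I|\,\rho_{w,p,I}(\langle f\rangle_I)^p,
\]
that is, $\|A_If\|_{L_p(\NN,w)}=|I|^{1/p}\rho_{w,p,I}(\langle f\rangle_I)$. Thus $A_I$ factors as the map $f\mapsto\langle f\rangle_I\in(M_d(\C),\rho_{w,p,I})$, scaled by $|I|^{1/p}$, followed by an isometric embedding. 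Its norm is therefore $|I|^{1/p}$ times the norm of the averaging map $L_p(\NN,w)\to(M_d(\C),\rho_{w,p,I})$.

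Next we compute the norm of $f\mapsto\langle f\rangle_I$ by duality. For $B\in M_d(\C)$,
\[
|\tr(B^*\langle f\rangle_I)|=\frac1{|I|}\Big|\int_I\tr(B^*f(t))\,dt\Big|\le\frac1{|I|}\int_I\rho^*_{w,p,t}(B)\rho_{w,p,t}(f(t))\,dt.
\]
Hölder's inequality bounds the right side by $|I|^{-1/p}\rho^*_{w,p,I}(B)\|f\|_{L_p(\NN,w)}$. Hence
\[
\rho_{w,p,I}(\langle f\rangle_I)=\sup_B\frac{|\tr(B^*\langle f\rangle_I)|}{(\rho_{w,p,I})^*(B)}\le|I|^{-1/p}\sup_B\frac{\rho^*_{w,p,I}(B)}{(\rho_{w,p,I})^*(B)}\,\|f\|_{L_p(\NN,w)},
\]
where the first equality is finite-dimensional bidual duality. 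This gives the upper bound in \eqref{eq:one-average-norm}.

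For the lower bound we need near-extremisers. Fix $B$ with $\rho^*_{w,p,I}(B)<\infty$ and choose, for each $t$, a matrix $f(t)$ attaining the pointwise duality
\[
\tr(B^*f(t))=\rho^*_{w,p,t}(B)\rho_{w,p,t}(f(t)),\qquad \rho_{w,p,t}(f(t))=\rho^*_{w,p,t}(B)^{p'-1}.
\]
Explicitly, by Lemma~\ref{lem:matrix-pointwise-dual}, set $G=w^{-1/2p}Bw^{-1/2p}$ with polar form $G=U|G|$, and take $f=w^{-1/2p}U|G|^{p'-1}w^{-1/2p}$. This choice is measurable. Then $\langle f\rangle_I$ satisfies
\[
\tr(B^*\langle f\rangle_I)=\rho^*_{w,p,I}(B)^{p'},\qquad \|f\chi_I\|_{L_p(\NN,w)}=|I|^{1/p}\rho^*_{w,p,I}(B)^{p'/p}.
\]
Combining these with $\rho_{w,p,I}(\langle f\rangle_I)\ge|\tr(B^*\langle f\rangle_I)|/(\rho_{w,p,I})^*(B)$ gives the ratio $\rho^*_{w,p,I}(B)/(\rho_{w,p,I})^*(B)$ exactly.

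If $\rho^*_{w,p,I}(B)=\infty$, we truncate. Restrict to the sets $\{t\in I:\|w(t)^{-1}\|\le k\}$ and normalise over $I$; the truncated quantities increase to $\infty$, so the operator norm is infinite, consistent with the convention. The main technical point is this truncation, together with checking that the constructed $f$ lies in $L_p(\NN,w)$; density (Lemma~\ref{lem:density of N}) lets us test on $\SS_\NN^c$ if needed. Finally, \eqref{eq:single-averages-characteristic} follows by taking $\sup_I$ and comparing with Definition~\ref{def:matrix-Ap-global-local}.
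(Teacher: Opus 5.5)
Your argument is correct and is essentially the paper's proof unrolled. The paper factors $A_I$ through the finite-dimensional space $(M_d(\C),\rho_{w,p,I})$ via $P_If=\langle w^{-1/2p}fw^{-1/2p}\rangle_I$ and invokes $\|P_I\|=\|P_I^*\|$ with $P_I^*A=w^{-1/2p}Aw^{-1/2p}\chi_I/|I|$; your H\"older upper bound and your explicit norming function (with $w^{1/2p}fw^{1/2p}=U|G|^{p'-1}$ on $I$) are exactly the two halves of that adjoint identity, and your truncation to $\{\|w^{-1}\|\le k\}$ usefully makes explicit the case $\rho^*_{w,p,I}(B)=\infty$, where $P_I$ need not even be defined on all of $L_p(\NN)$, which the paper's abstract duality step leaves implicit.
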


\begin{proof}
We first consider an operator
\[
    \begin{aligned}
        P_I: L_p(\NN)&\longrightarrow (M_d(\C),\rho_{w,p,I}) \\
        f&\longmapsto \langle w^{-1/2p}fw^{-1/2p}\rangle_I
    \end{aligned}
\]
then
\[
    \begin{aligned}
        \rho_{w,p,I}(P_I f)&=\Big(\frac{1}{|I|}\int_I \| w^{1/2p}(t)\langle w^{-1/2p} fw^{-1/2p}\rangle_I w^{1/2p}(t)\|_{S_p^d}^pdt\Big)^{1/p}\\
        &=|I|^{-1/p}\| A_I (w^{-1/2p}fw^{-1/2p})\|_{L_p(\NN,w)}.
    \end{aligned}
\]
Thus
\[
    \begin{aligned}
        \|P_I\|_{L_p(\NN)\to (M_d(\C),\rho_{w,p,I})}&=\sup_{0\neq f\in L_p(\NN)}\frac{ \rho_{w,p,I}(P_I f)}{\| f\|_{L_p(\NN)}}\\&=|I|^{-1/p}\cdot\sup_{0\neq g\in L_p(\NN,w)}\frac{\| A_Ig\|_{L_p(\NN,w)}}{\| g\|_{L_p(\NN,w)}}.
    \end{aligned}
\]
Thus we only need to calculate the operator norm of $P_I$. The adjoint operator of $P_I$ can be calculated as
\[
    \begin{aligned}
        P_I^*:(M_d(\C),(\rho_{w,p,I})^*)&\longrightarrow L_{p'}(\NN) \\
        A&\longmapsto w^{-1/2p}Aw^{-1/2p}\cdot \frac{\chi_I}{|I|}
    \end{aligned}
\]
Then
\[
    \|A_I\|_{L_p(\NN,w)\to L_p(\NN,w)}=|I|^{1/p}\cdot \sup_{A\neq 0}\frac{\|P_I^*A\|_{L_{p'}(\NN)}}{(\rho_{w,p,I})^*(A)}=\sup_{A\neq 0}\frac{\rho_{w,p,I}^*(A)}{(\rho_{w,p,I})^*(A)}.
\]
Taking the supremum over $I$ gives \eqref{eq:single-averages-characteristic}.
\end{proof}

\subsection{The noncommutative weighted Hardy-Littlewood maximal inequality}
Our proof of the {\it only if} part of Theorem \ref{thm:intro-maximal} needs a comparison between the noncommutative maximal norm and the vector-valued maximal norm.

\begin{lemma}\label{lem:positive-max-to-classical}
Let $\mathcal F$ be finite and
$(h_\alpha)_{\alpha\in\mathcal F}\subset L_p(\NN)$.  Then
\begin{equation}\label{eq:positive-max-to-classical}
 \big\|\max_{\alpha\in\mathcal F}
       \|h_\alpha(\,\cdot\,)\|_{S_p^d}\big\|_{L_p(\R)}
 \le\big\|{\sup_{\alpha\in\mathcal F}}^+h_\alpha\big\|_{L_p(\NN)}
 \le d^{1/p}\big\|\max_{\alpha\in\mathcal F}
       \|h_\alpha(\,\cdot\,)\|_{S_p^d}\big\|_{L_p(\R)}.
\end{equation}
\end{lemma}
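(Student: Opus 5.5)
The plan is to prove both inequalities directly from the factorization definition \eqref{eq:prelim-Lp-linfty} of $L_p(\NN;\ell_\infty(\mathcal F))$, working fiberwise through the identification \eqref{equ:canonic isomorphism}. Since $\mathcal F$ is finite, the function $\varphi(t):=\max_{\alpha\in\mathcal F}\|h_\alpha(t)\|_{S_p^d}$ is measurable. Moreover, every statement of the form ``$\|y_\alpha(t)\|_{S_\infty^d}\le\|y_\alpha\|_\infty$ a.e.'' can be required simultaneously for all $\alpha$ off a single null set.

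For the left inequality, I would fix $\varepsilon>0$ and choose a common factorization $h_\alpha=ay_\alpha b$ with $a,b\in L_{2p}(\NN)$, $\sup_\alpha\|y_\alpha\|_\infty\le1$, and $\|a\|_{2p}\|b\|_{2p}\le\|{\sup_\alpha}^+h_\alpha\|_{L_p(\NN)}+\varepsilon$. Evaluating fiberwise and applying the matrix H\"older inequality in $S^d_\cdot$ gives, for a.e.\ $t$ and every $\alpha\in\mathcal F$,
\[
\|h_\alpha(t)\|_{S_p^d}\le\|a(t)\|_{S_{2p}^d}\,\|y_\alpha(t)\|_{S_\infty^d}\,\|b(t)\|_{S_{2p}^d}\le\|a(t)\|_{S_{2p}^d}\|b(t)\|_{S_{2p}^d}.
\]
Taking the maximum over $\alpha$, raising to the $p$-th power, and integrating, I would then apply Cauchy--Schwarz in $t$:
\[
\int_\R\varphi^p\,dt\le\Big(\int_\R\|a(t)\|_{S_{2p}^d}^{2p}dt\Big)^{1/2}\Big(\int_\R\|b(t)\|_{S_{2p}^d}^{2p}dt\Big)^{1/2}=\|a\|_{2p}^p\|b\|_{2p}^p.
\]
Letting $\varepsilon\to0$ yields the left inequality in \eqref{eq:positive-max-to-classical}.

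For the right inequality, I would construct an explicit factorization using the scalar majorant. Set $a=b=\varphi^{1/2}1_d$, and put $y_\alpha(t)=h_\alpha(t)/\varphi(t)$ where $\varphi(t)>0$ and $y_\alpha(t)=0$ otherwise. Then $h_\alpha=ay_\alpha b$ a.e., because $\varphi(t)=0$ forces $h_\alpha(t)=0$. Each $y_\alpha$ is strongly measurable, and
\[
\|y_\alpha(t)\|_{S_\infty^d}\le\frac{\|h_\alpha(t)\|_{S_p^d}}{\varphi(t)}\le1,
\]
so $\sup_\alpha\|y_\alpha\|_\infty\le1$. The factors satisfy
\[
\|a\|_{2p}^{2p}=\int_\R\tr\big(\varphi(t)^p1_d\big)\,dt=d\,\|\varphi\|_{L_p(\R)}^p,
\]
so $a,b\in L_{2p}(\NN)$ and $\|a\|_{2p}\|b\|_{2p}=d^{1/p}\|\varphi\|_{L_p(\R)}$. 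This gives the right inequality.

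There is no serious obstacle here. The only points needing care are the measurability of $\varphi$ and of the quotients $y_\alpha$, the treatment of the zero set of $\varphi$, and the passage from the global bound $\|y_\alpha\|_\infty$ to a pointwise a.e.\ bound. All of these are handled by the finiteness of $\mathcal F$. The factor $d^{1/p}$ arises solely because the trace of $1_d$ equals $d$.
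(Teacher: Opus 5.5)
Your proposal is correct and follows essentially the same route as the paper: fiberwise H\"older plus Cauchy--Schwarz in $t$ for any common factorization gives the lower bound, and the explicit factorization $h_\alpha=(\varphi^{1/2}1_d)(\varphi^{-1}h_\alpha)(\varphi^{1/2}1_d)$, with the middle factor set to zero off $\{\varphi>0\}$, gives the upper bound. As you note, the factor $d^{1/p}$ comes from $\tr(1_d)=d$.
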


\begin{proof}
Set $\widetilde{h}(t)=\max_\alpha\|h_\alpha(t)\|_{S_p^d}$.  For every common
factorization $h_\alpha=ay_\alpha b$, pointwise operator norms followed by
scalar H\"older give
\[
 \|\widetilde{h}\|_{L_p(\R)}
 \le\|a\|_{2p}\sup_\alpha\|y_\alpha\|_\infty\|b\|_{2p}.
\]
Taking the infimum proves the first inequality.  Conversely, on the set $E:=\{x\in \R:\widetilde{h}(x)>0\}$, write
\[
 h_\alpha=(\widetilde{h}^{1/2}\cdot 1_d)(\widetilde{h}^{-1}h_\alpha)(\widetilde{h}^{1/2}\cdot 1_d),
 \qquad \|\widetilde{h}^{-1}h_\alpha\|_{S_\infty^d}\le1,
\]
and set the middle factor equal to zero on $E^c$.  This common factorization gives the second inequality since $\|\widetilde{h}^{1/2}\cdot 1_d\|_{2p}^2=d^{1/p}\|\widetilde{h}\|_{L_p(\R)}$.
\end{proof}

\begin{remark}
The preceding lemma shows that the noncommutative maximal norm
dominates the scalar $L_p$-norm obtained by first taking pointwise
Schatten $p$-norms and then the supremum. In finite matrix dimension,
the reverse comparison holds with the optimal factor $d^{1/p}$.
Indeed, let $h_j=\mathbf{1}_{(0,1)}e_{jj}$ for $1\le j\le d$,
where $(e_{ij})$ are the standard matrix units. Then
\[
 \left\|\max_{1\le j\le d}
       \|h_j(\,\cdot\,)\|_{S_p^d}\right\|_{L_p(\R)}
 =1,
 \qquad
 \big\|{\sup_{1\le j\le d}}^+h_j\big\|_{L_p(\NN)}
 =d^{1/p}.
\]
Thus the reverse estimate cannot hold with a dimension-independent
constant. For $1\le p<\infty$, the same construction with countably
many mutually orthogonal rank-one projections in $B(\ell_2)$,
equipped with its usual trace, gives a scalar maximal function of
$L_p$-norm $1$, but an infinite noncommutative maximal norm.
\end{remark}

We are now ready to prove Theorem \ref{thm:intro-maximal}.

\begin{proof}[Proof of Theorem~\ref{thm:intro-maximal}]
The {\it only if} part. Let $w\in A_p(\NN)$. We aim to prove the maximal inequality
\begin{align*}
    \big\| {\sup_{I}}^+ A_If\big\|_{L_p(\NN,w)}\leq d^{\frac{1}{p}+(1+p')|\frac{1}{p}-\frac{1}{2}|}\mathcal{G}_{p,d^2}[w]_{A_p(\NN)}^{p'/p}\| f\|_{L_p(\NN,w)},\qquad f\in \SS_\NN^c.
\end{align*}
Given $f\in \SS_\NN^c$, set  
$F=w^{1/2p}fw^{1/2p}$. Then we have
\[
 M_{\mathbb W}F(t)
 =\sup_{I\ni t}\frac1{|I|}\int_I
 \|\mathbb W(t)^{1/p}\mathbb W(s)^{-1/p}F(s)\|_{S_2^d}\,ds,
\]
where $\mathbb W$ is related to $w$ by \eqref{equ:mathbb W}.
Lemma~\ref{lem:quantitative goldberg} and Lemma~\ref{lem:compare N and ell_2^d2} give
\begin{equation}\label{eq:IM-quantitative-bound}
 \begin{aligned}
     \|M_{\mathbb W}F\|_{L_p(\R)}
 &\leq \GG_{p,d^2}[\mathbb W]_{A_p(\R;M_{d^2}(\C))}^{p'/p}
 \|F\|_{L_p(\R;S_2^d)} \\
 &\leq d^{\max\{0,\frac{1}{2}-\frac{1}{p}\}+p'|\frac{1}{2}-\frac{1}{p}|}\GG_{p,d^2} [w]_{A_p(\NN)}^{p'/p}
 \|F\|_{L_p(\R;S_p^d)}
 \end{aligned}
\end{equation}
By Minkowski's inequality and
\eqref{eq:operator-weight-powers},
\[
 \begin{aligned}
     \sup_{I\in \II}\|w(t)^{1/2p}A_If(t)w(t)^{1/2p}\|_{S_p^d}&=\sup_{I\ni t}\frac{1}{|I|}\Big\|\int_I w(t)^{1/2p}f(s)w(t)^{1/2p}ds\Big\|_{S_p^d} \\
 &\leq d^{\max\{0,\frac{1}{p}-\frac{1}{2}\}}\sup_{I\ni t}\frac{1}{|I|}\int_I \| \mathbb W^{1/p}(t)\mathbb W^{-1/p}(s)F(s)\|_{S_2^d}ds\\
 &= d^{\max\{0,\frac{1}{p}-\frac{1}{2}\}}M_{\mathbb W}F(t).
 \end{aligned}
\]
Thus, for every finite $\mathcal F\subset\II$, Lemma
\ref{lem:positive-max-to-classical}, \eqref{eq:IM-quantitative-bound},
and equivalence of Schatten norms yield
\[
\begin{aligned}
    \big\|{\sup_{I\in\mathcal F}}^+A_If\big\|_{L_p(\NN,w)}&\leq d^{1/p}\big\| \max_{I\in \mathcal F}\|w^{1/2p}(A_I f)w^{1/2p}\|_{S_p^d}\big\|_{L_p(\R)} \\
 &\leq d^{\frac{1}{p}+(1+p')|\frac{1}{p}-\frac{1}{2}|}\mathcal{G}_{p,d^2}[w]_{A_p(\NN)}^{p'/p}
 \|f\|_{L_p(\NN,w)}.
\end{aligned}
\]
The finite-subfamily formula \eqref{eq:prelim-finite-subfamilies} gives the
full interval family, and density of $\SS_\NN^c$ from
Lemma~\ref{lem:density of N} gives the bounded extension.  This proves the desired maximal inequality.

The {\it if} part. If the maximal operator has finite norm $C_{p,d}(w)$, every coordinate $A_I$ has
norm at most $C_{p,d}(w)$.  Lemma~\ref{lem:one-average} therefore gives
\[
 [w]_{A_p(\NN)}^{1/p}
 =\sup_{I\in\II}\|A_I\|_{L_p(\NN,w)\to L_p(\NN,w)}
 \le C_{p,d}(w),
\]
which proves the lower bound and completes the proof.
\end{proof}

\section{{A noncommutative analogue of Rubio de Francia's abstract extrapolation theory}}\label{sec:abstract Rubio}

{In this section, we establish a noncommutative version of Rubio de Francia's abstract extrapolation theory. For technical reasons, we can only work on trace-finite von Neumann algebras at the moment of writing. Let $(\M,\tau)$ be trace-finite and let $(E_\beta)_{\beta\in\BB}$ be a family of positive linear maps on $\M$. We will first introduce the weight class associated to $(E_\beta)_{\beta\in\BB}$, then provide a factorization result for these weight classes, and finally establish the extrapolation theorem for operators.}

\subsection{The weight classes and factorization}

\begin{definition}\label{def:weight-classes}
Let $1<p<\infty$.  We write $w\in W_p((E_\beta)_\beta)$ if
$w\in L_0^+(\M)$ is an operator weight and
\[
 x\longmapsto(E_\beta(x))_{\beta\in\BB},
 \qquad x\in\mathscr \M\cap L_p(\M,w),
\]
extends boundedly from $L_p(\M,w)$ to
$L_p(\M,w;\ell_\infty(\BB))$.  The norm of this extension is denoted by
$[w]_{W_p((E_\beta)_\beta)}$, or simply $[w]_{W_p}$; equivalently,
\begin{equation}\label{eq:Wp-def}
 \big\|{\sup_{\beta\in\BB}}^+E_\beta(x)\big\|_{L_p(\M,w)}
 \le [w]_{W_p}\|x\|_{L_p(\M,w)},\qquad x\in L_p(\M,w).
\end{equation}
\end{definition}

The extension is unique because $\M\cap L_p(\M,w)\subset L_p(\M,w)$ from Lemma~\ref{lem:density of M} is dense.  

\begin{assumption}\label{ass:symmetric}
For every $1<p<\infty$, $w\in W_p((E_\beta)_\beta)$ is equivalent to $w^{-p'/p}\in W_{p'}((E_\beta)_\beta)$. Quantitatively, there is a fixed increasing function
$\gamma_p:[0,\infty)\to[0,\infty)$ such that
\begin{equation}\label{eq:gamma-control}
 [w^{-p'/p}]_{W_{p'}}
 \le
 \gamma_p\bigl([w]_{W_p}\bigr).
\end{equation}
\end{assumption}

Applying the same assumption at $p'$ to $w^{-p'/p}$ also gives the
reverse set inclusion and quantitative control. 

For $1<p,r<\infty$, define
\begin{equation}\label{eq:quantitative-control-functions}
 \Theta_{p,r}(t)=
 \begin{cases}
 \displaystyle
 \gamma_{r'}\!\left(
    4^{1-p'/r'}\max\{1,\gamma_p(t)\}
 \right),&1<r<p,\\[0.8em]
 \displaystyle
 4^{1-p/r}\max\{1,t\},&1<p\leq r.
 \end{cases}
\end{equation}

The following equivalence statement will play an instrumental role in showing a factorization result.

\begin{lemma}\label{lem:tech}
Let $1<p<\infty$ and $\mathbf{w} \in L_0(\M)$ be invertible. Then $|\mathbf{w}|^{2p}\in W_p((E_\beta)_\beta)$ if and only if
\begin{equation}\label{eq:polar-transfer}
 \big\|
{\sup_{\beta\in\BB}}^+\mathbf{w}E_\beta\bigl(\mathbf{w}^{-1}x(\mathbf{w}^*)^{-1}\bigr)\mathbf{w}^* \big\|_p \le C\|x\|_p,\qquad x\in L_p(\M).
\end{equation}
The least constant in \eqref{eq:polar-transfer} equals
$[|\mathbf{w}|^{2p}]_{W_p((E_\beta)_\beta)}$.
\end{lemma}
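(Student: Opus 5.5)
Set $w=|\mathbf w|^{2p}$, so that $w^{1/2p}=|\mathbf w|$. Take the polar decomposition $\mathbf w=v|\mathbf w|$. Since $\mathbf w$ is invertible and $\M$ is finite, $v$ is a unitary in $\M$. Then $|\mathbf w|=v^*\mathbf w$ and $|\mathbf w|=\mathbf w^*v$, and also $\mathbf w^{-1}=|\mathbf w|^{-1}v^*$ and $(\mathbf w^*)^{-1}=v|\mathbf w|^{-1}$. The plan is to show that the two maximal inequalities are the same statement after the isometric change of variables $J_{p,w}$ and a unitary conjugation.

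\emph{Step 1: change of variables.} For $x\in L_p(\M)$ put
\[
y=w^{-1/2p}(v^*xv)w^{-1/2p}=|\mathbf w|^{-1}v^*xv|\mathbf w|^{-1}=\mathbf w^{-1}x(\mathbf w^*)^{-1}.
\]
The map $x\mapsto y$ is a bijection from $L_p(\M)$ onto $L_p(\M,w)$. Moreover $\|y\|_{L_p(\M,w)}=\|v^*xv\|_p=\|x\|_p$, because $J_{p,w}$ is an onto isometry and conjugation by a unitary preserves $L_p$-norms.

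\emph{Step 2: comparing the maximal norms.} For each $\beta$,
\[
\mathbf wE_\beta(y)\mathbf w^*=v|\mathbf w|E_\beta(y)|\mathbf w|v^*=v\,\bigl(w^{1/2p}E_\beta(y)w^{1/2p}\bigr)\,v^*.
\]
Apply Lemma~\ref{lem:vector-holder} with $s=\infty$, first with $a=v$, $b=v^*$ and then with $a=v^*$, $b=v$. This shows that the $L_p(\M;\ell_\infty)$-norm of $(\mathbf wE_\beta(y)\mathbf w^*)_\beta$ equals that of $(w^{1/2p}E_\beta(y)w^{1/2p})_\beta$, which by definition is $\|{\sup_\beta}^+E_\beta(y)\|_{L_p(\M,w)}$. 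Combining this with Step 1, inequality \eqref{eq:polar-transfer} with constant $C$ is equivalent to \eqref{eq:Wp-def} with constant $C$. Hence the least constants coincide.

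\emph{Step 3: domain subtlety (main obstacle).} In Definition~\ref{def:weight-classes}, $E_\beta$ is defined only on $\M\cap L_p(\M,w)$ and extended by density. In \eqref{eq:polar-transfer}, however, $\mathbf w^{-1}x(\mathbf w^*)^{-1}$ need not lie in $\M$ for a general $x\in L_p(\M)$. I will therefore read $E_\beta$ there as the extension from $W_p$ in the ``only if'' direction.

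For the ``if'' direction, I will restrict to $x$ for which $y\in\M$; under the correspondence of Step 1 these $x$ are exactly the images $v\,J_{p,w}(\M\cap L_p(\M,w))\,v^*$. On $\M\cap L_p(\M,w)$ the bound is then obtained directly. It extends to all of $L_p(\M,w)$ by Lemma~\ref{lem:density of M}, since the maximal space is Banach. It may be more convenient to use the dense class $\mathscr C(w)$ here, which Lemma~\ref{lem:density of M} shows is invariant under $J_{p,w}$.

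In the ``only if'' direction, density and continuity show that the extended family satisfies \eqref{eq:polar-transfer} for all $x\in L_p(\M)$. Beyond these density bookkeeping points, the argument is purely algebraic.
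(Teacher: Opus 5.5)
Your proof is correct and follows the same route as the paper: both rest on the onto isometry $y\mapsto \mathbf w y\mathbf w^*$ from $L_p(\M,|\mathbf w|^{2p})$ to $L_p(\M)$, which turns \eqref{eq:polar-transfer} into \eqref{eq:Wp-def} with the same constant, so the least constants coincide. You also supply two details the paper leaves implicit: you identify the maximal norms through the unitary part of the polar decomposition and Lemma~\ref{lem:vector-holder} with $s=\infty$, and you handle the domain of $E_\beta$ by a density argument.
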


\begin{proof}
Firstly, we claim that the map
\[
    \begin{aligned}
        \iota:L_p(\M,|\mathbf{w}|^{2p})&\longrightarrow L_p(\M) \\
        x&\longmapsto \mathbf{w}x\mathbf{w}^*
    \end{aligned}
\]
is an onto isometry. Indeed, on the one hand,
\[
    \| x\|_{L_p(\M,|\mathbf{w}|^{2p})}=\| |\mathbf{w}|\cdot x\cdot|\mathbf{w}|\|_p=\| \mathbf{w}x\mathbf{w}^*\|_p=\| \iota(x)\|_p,
\]
which shows that $\iota$ is an isometry. On the other hand, given $x\in L_p(\M)$, then it is clear that $\iota(\mathbf{w}^{-1}x(\mathbf{w}^*)^{-1})=x$. Thus \eqref{eq:polar-transfer} is equivalent to
\[
    \big\| {\sup_{\beta\in\BB}}^+E_\beta(y) \big\|_{L_p(\M,|\mathbf{w}|^{2p})}=\big\| {\sup_{\beta\in\BB}}^+\mathbf{w}E_\beta(y)\mathbf{w}^* \big\|_p \le C\|y\|_{L_p(\M,|\mathbf{w}|^{2p})},\qquad y\in L_p(\M,|\mathbf{w}|^{2p}),
\]
{which gives the estimate as desired.}
\end{proof}

\begin{theorem}\label{thm:factorization-both-sides}
Suppose that Assumption~\ref{ass:symmetric} holds.  Let
$1<p,r<\infty$, $p\ne r$, and define $s>1$ by
\(
 1/s=|1/p-1/r|.
 \)
If $w\in W_p((E_\beta)_\beta)$ and
$u\in L_s^+(\M)$ has full support, then there is a full-support operator $v\in L_s^+(\M)$ such that
\begin{equation}\label{eq:factorization-majorant}
 u\le v,
 \qquad \|v\|_s\le2\|u\|_s,
\end{equation}
and
\begin{equation}\label{eq:factorization-output-weights}
\begin{cases}
 |v^{1/2}w^{1/2p}|^{2r}\in W_r((E_\beta)_\beta),&1<r<p,\\
 |v^{-1/2}w^{1/2p}|^{2r}\in W_r((E_\beta)_\beta),&1<p<r.
\end{cases}
\end{equation}
Quantitatively,
\begin{equation}\label{eq:factorization-characteristic-control}
\begin{aligned}
 [|v^{1/2}w^{1/2p}|^{2r}]_{W_r}&\le
 \gamma_{r'}\!\left(
    4^{1-p'/r'}
    \max\left\{1,
       \gamma_p\bigl([w]_{W_p}\bigr)
    \right\}
 \right),&&1<r<p,\\[0.5em]
 [|v^{-1/2}w^{1/2p}|^{2r}]_{W_r}
 &\le4^{1-p/r}\max\{1,[w]_{W_p}\},
 &&1<p<r.
\end{aligned}
\end{equation}
\end{theorem}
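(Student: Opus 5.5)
The plan is to imitate the Rubio de Francia iteration algorithm. The difference is that the nonexistent pointwise supremum is replaced by positive majorants supplied by the order description \eqref{eq:prelim-positive-majorant} of $L_s(\M;\ell_\infty)$ for positive families. In each case I will build a family of positive linear maps $F_\beta$ on $L_s(\M)$ with
\[
\big\|{\sup_\beta}^+F_\beta(h)\big\|_s\le K\|h\|_s,\qquad h\in L_s^+(\M),
\]
where $K$ is controlled by $[w]_{W_p}$ (case $p<r$) or by $\gamma_p([w]_{W_p})$ (case $r<p$). The family $F_\beta$ is obtained by conjugating $E_\beta$, or the maps dual to it, by powers of $w$. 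In the case $1<p<r$ the natural candidate is
\[
F_\beta(h)=w^{-1/2p}E_\beta\bigl(w^{1/2p}hw^{1/2p}\bigr)w^{-1/2p},
\]
read through the duality of Lemma~\ref{lem:Lp-l1-dual}. For $1<r<p$ I would work with the dual weight $w^{-p'/p}\in W_{p'}$ given by Assumption~\ref{ass:symmetric}, at the exponents $r'<p'$, and then convert back at the end. This conversion accounts for the outer $\gamma_{r'}$ in $\Theta_{p,r}$.

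\textbf{Construction of $v$.} Set $u_0=u$. Inductively, by \eqref{eq:prelim-positive-majorant}, choose $u_{k+1}\in L_s^+(\M)$ with
\[
F_\beta(u_k)\le u_{k+1}\ \text{ for all }\beta,\qquad \|u_{k+1}\|_s\le K\|u_k\|_s
\]
(after a harmless $\varepsilon$-slack absorbed into $K$). Then put
\[
v=\sum_{k\ge0}(2K)^{-k}u_k .
\]
This gives $u\le v$, $\|v\|_s\le 2\|u\|_s$, and full support of $v$ because $v\ge u$. Since each $F_\beta$ is positive and linear (and normal, which I will need to check to pass to the series),
\[
F_\beta(v)=\sum_k(2K)^{-k}F_\beta(u_k)\le\sum_k(2K)^{-k}u_{k+1}\le 2K\,v\quad\text{for all }\beta .
\]
This uniform operator inequality is the noncommutative replacement for the $A_1$-type condition $M(v)\le 2Kv$.

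\textbf{Transfer to the weight.} Put $\mathbf w=(v^{\mp1/2}w^{1/2p})^*$, so that $|\mathbf w^*|$ realizes the sandwich in \eqref{eq:factorization-output-weights}. By Lemma~\ref{lem:tech} the claim becomes a maximal inequality for the family $\mathbf wE_\beta(\mathbf w^{-1}x\mathbf w^{*-1})\mathbf w^*$ in $L_r(\M)$. For $p<r$ I will prove it through the testing Lemma~\ref{lem:maximal-testing}(ii) with exponents $r<p$ replaced by their duals. Dualizing against positive $L_{r'}(\M;\ell_1)$ families costs the factor $4$, and the remaining step should be a H\"older split of $\tau\bigl(y_\beta\,\cdot\,\bigr)$ into an $L_p$ weighted piece, controlled by $[w]_{W_p}$, and an $L_s$ piece, controlled through $F_\beta(v)\le 2Kv$ together with the monotonicity Lemma~\ref{lem:compare-sandwich}. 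The interpolation exponent $1-p/r=p/s$ in $4^{1-p/r}$ suggests that the factor $4$ enters only to the power $p/s$. I would obtain this by a three-lines or H\"older interpolation between the $L_p$ estimate and the trivial $L_\infty$-type bound in the direction of $v$. For $r<p$ I will run the same argument for $w^{-p'/p}$ at exponents $p'<r'$, producing $|\ldots|^{2r'}\in W_{r'}$ with constant $4^{1-p'/r'}\max\{1,\gamma_p([w]_{W_p})\}$. Applying Assumption~\ref{ass:symmetric} at $r'$ then returns to $W_r$ and gives the stated bound. Along the way I must check that the weight produced is indeed $|v^{1/2}w^{1/2p}|^{2r}$, using $(|\mathbf w|^{2r'})^{-r/r'}=|\mathbf w^{*-1}|^{2r}$.

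\textbf{Main obstacle.} I expect the hard step to be the transfer to the weight. The inequality $F_\beta(v)\le 2Kv$ gives order control only after conjugation by $v^{\pm1/2}$, and $v$ does not commute with $w$. Turning it into a bound for the \emph{mixed} products $v^{\mp1/2}w^{1/2p}$ inside a maximal norm requires a careful Douglas-factorization/contraction argument, as in Lemma~\ref{lem:compare-sandwich}, combined with the weighted H\"older inequality of Lemma~\ref{lem:vector-holder}. It also requires choosing the right form of $F_\beta$ so that the contraction appears on the correct side. I would first try the choice of $F_\beta$ that makes $\mathbf w^{-1}x\mathbf w^{*-1}$ equal to $v^{\pm1/2}(\cdot)v^{\pm1/2}$ applied to a $w$-weighted element, and adjust the definition of $F_\beta$ if the contractions land on the wrong side.
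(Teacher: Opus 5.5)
\textbf{There is a genuine gap: you run the iteration in the wrong space.}

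The hypothesis $w\in W_p$ gives a maximal bound only for the transported maps $S_\beta(x)=w^{1/2p}E_\beta(w^{-1/2p}xw^{-1/2p})w^{1/2p}$, and only on $L_p(\M)$. Your $F_\beta(h)=w^{-1/2p}E_\beta(w^{1/2p}hw^{1/2p})w^{-1/2p}$ is exactly the transported map of the dual weight $w^{-p'/p}$. Assumption~\ref{ass:symmetric} bounds it on $L_{p'}(\M)$, not on $L_s(\M)$, and in general it is unbounded on $L_s$. In the commutative model (interval averages on a bounded interval), boundedness of $h\mapsto w^{-1/p}M(w^{1/p}h)$ on $L_s$ means $w^{-s/p}\in A_s$. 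Take $p=2$, $r=3$ (so $s=6$) and $w(t)=|t|^{1/2}\in A_2$: then $w^{-3}$ is not even locally integrable.

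Even if you had such a bound, $F_\beta(v)\le 2Kv$ is the wrong order relation for the interpolation you sketch. To reach $v^{-1/2}S_\beta(v^{1/2}xv^{1/2})v^{-1/2}$ at an interior point, starting from the $L_p$ bound at $\Re z=0$, the $L_\infty$ endpoint must come from a super-eigenvector inequality $S_\beta(v^{s/p})\lesssim v^{s/p}$ for $S_\beta$ itself on a power of $v$. No linear condition on $v$ provides this. Nor can you take an $L_s$-majorant $v\ge u$ and raise it to the power $s/p>1$, since $t\mapsto t^{s/p}$ is not operator monotone. The H\"older split via the testing lemma does not fill this gap; that lemma belongs to the later extrapolation theorem, not to the factorization.

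\textbf{The missing idea.} Iterate $S_\beta$ in $L_p$ and then come down by a fractional power.
\begin{itemize}
\item Start from $h=u^{s/p}\in L_p^+(\M)$. Using \eqref{eq:prelim-positive-majorant}, choose $h_{k+1}\ge S_\beta(h_k)$ with $\|h_{k+1}\|_p\le2C\|h_k\|_p$, where $C=\max\{1,[w]_{W_p}\}$.
\item Set $v_0=\sum_k(4C)^{-k}h_k$. Then $u^{s/p}\le v_0$, $\|v_0\|_p\le2\|u\|_s^{s/p}$, and $S_\beta(v_0)\le4Cv_0$.
\item Put $v=v_0^{\theta}$ with $\theta=p/s=1-p/r<1$. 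L\"owner--Heinz, now in the admissible direction, gives $u\le v$ and $\|v\|_s\le2^\theta\|u\|_s$.
\item The relation $S_\beta(v_0)\le4Cv_0$ makes each $x\mapsto v_0^{-1/2}S_\beta(v_0^{1/2}xv_0^{1/2})v_0^{-1/2}$ a positive map of norm at most $4C$ on $\M$.
\item Apply Stein interpolation to $z\mapsto v_0^{-z/2}S_\beta(v_0^{z/2}xv_0^{z/2})v_0^{-z/2}$, between the bound $C$ on $L_p$ and the bound $4C$ on $L_\infty$. Use spectral cut-offs of $v_0$ to make the family well defined, work with finite subfamilies, and invoke Lemma~\ref{lem:Lp-l1-interpolation}. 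At $z=\theta$, where $v_0^{\theta/2}=v^{1/2}$, this gives the bound $4^\theta C$ on $L_r(\M;\ell_\infty)$.
\item Apply Lemma~\ref{lem:tech} with $\mathbf w=v^{-1/2}w^{1/2p}$, not its adjoint: the lemma concerns $|\mathbf w|$, and $\mathbf wE_\beta(\mathbf w^{-1}x\mathbf w^{*-1})\mathbf w^*=v^{-1/2}S_\beta(v^{1/2}xv^{1/2})v^{-1/2}$. This gives exactly the constant $4^{1-p/r}\max\{1,[w]_{W_p}\}$.
\end{itemize}
Your reduction of the case $r<p$ to this one is correct and matches the paper. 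It passes to the dual weight at exponents $p'<r'$, which is where your $F_\beta$ legitimately iterates, on $L_{p'}$ starting from $u^{s/p'}$. It then applies Assumption~\ref{ass:symmetric} at $r'$, using the identity $(|\mathbf w|^{2r'})^{-r/r'}=|\mathbf w^{*-1}|^{2r}$.
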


\begin{proof}
Firstly, if $v$ is full-support and $w$ is an operator weight, then both $|v^{1/2}w^{1/2p}|^{2r}$ and $|v^{-1/2}w^{1/2p}|^{2r}$ are operator weights.

For the proof, denote by $S_\beta$ the positive linear operator on
$L_p(\M)$ defined as
\begin{equation}\label{eq:transported-structural-map}
 S_\beta(x)
 =w^{1/2p}E_\beta(w^{-1/2p}xw^{-1/2p})w^{1/2p}.
\end{equation}
Definition~\ref{def:weight-classes} gives
\begin{equation}\label{eq:transported-p-bound}
 \big\|{\sup_{\beta\in\BB}}^+S_\beta(x)\big\|_{p}
 \le C\|x\|_p,
 \qquad
 C=\max\{1,[w]_{W_p}\}.
\end{equation}

We first establish an endpoint construction.  Let $h\in L_p^+(\M)$ have
full support.  Starting with $h_0=h$, the positive-majorant formula
\eqref{eq:prelim-positive-majorant} allows us to choose inductively
$h_{k+1}\in L_p^+(\M)$ such that
\[
 S_\beta(h_k)\le h_{k+1}\quad(\beta\in\BB),
 \qquad
 \|h_{k+1}\|_p\le2C\|h_k\|_p.
\]
Consequently,
\begin{equation}\label{eq:endpoint-majorant}
 v_0=\sum_{k=0}^\infty(4C)^{-k}h_k
\end{equation}
converges in $L_p^+(\M)$ and satisfies
\begin{equation}\label{eq:v0-properties}
 h\le v_0,
 \qquad \|v_0\|_p\le2\|h\|_p,
 \qquad S_\beta(v_0)\le4Cv_0.
\end{equation}
The last inequality follows by applying the positive map
$S_\beta$ term by term to the convergent series.

For $x\in\M$, consider the positive maps
\[
 x\longmapsto
 v_0^{-1/2}S_\beta(v_0^{1/2}xv_0^{1/2})v_0^{-1/2}.
\]
If $0\le x\le1$, \eqref{eq:v0-properties} bounds the image by $4C1$.
Thus every coordinate is a bounded positive map on $\M$ with norm at most
$4C$.  Since
$L_\infty(\M;\ell_\infty(\BB))=\ell_\infty(\BB;\M)$ isometrically, this
gives, for arbitrary $x\in\M$,
\begin{equation}\label{eq:transported-infinity-bound}
    \big\|{\sup_{\beta\in\BB}}^+v_0^{-1/2}S_\beta(v_0^{1/2}xv_0^{1/2})v_0^{-1/2}\big\|_{\infty}\le4C\|x\|_\infty.
\end{equation}

Suppose now that $p<r$.  Then $1/s=1/p-1/r$.  Apply the endpoint
construction to $h=u^{s/p}$.  Its assumptions are satisfied because $u$
is positive with full support and
$\|u^{s/p}\|_p=\|u\|_s^{s/p}$.  Put
\[
 \theta=\frac ps=1-\frac pr,
 \qquad v=v_0^\theta.
\]
The L\"owner--Heinz inequality for positive operators
\cite{Kato1952}, applied with $0<\theta<1$ to
$u^{s/p}\le v_0$, gives
\begin{equation}\label{eq:v-from-v0}
 u\le v,
 \qquad
 \|v\|_s=\|v_0\|_p^\theta\le2^\theta\|u\|_s\le2\|u\|_s.
\end{equation}

We include the truncation needed for complex interpolation.  Let
$e_k=1_{[1/k,k]}(v_0)$, fix a finite
$\mathcal F\subset\BB$, and, for $m,n\ge1$, set
\begin{equation}\label{eq:truncated-analytic-family}
\begin{aligned}
 \mathscr T_{m,n}(z)x
 =\Big(&e_mv_0^{-z/2}
 S_\beta(v_0^{z/2}e_nxe_nv_0^{z/2})
 v_0^{-z/2}e_m\Big)_{\beta\in\mathcal F}.
 \qquad0\le\Re z\le1.
\end{aligned}
\end{equation}
{The spectral projections make sure that $\mathscr T_{m,n}$ is a well-defined analytic operator from the dense class $\M$ to $L_p(\M;\ell_\infty(\mathcal F))$.} 

{By \eqref{eq:transported-p-bound} and \eqref{eq:transported-infinity-bound}}, we have the following estimates on the lines $\Re z=0, 1$:
\begin{equation}\label{equ:abs-p}
\begin{aligned}
\|\mathscr T_{m,n}(it)x\|_{L_p(\mathcal M;\ell_\infty)}
&\le
\Big\|
\big(S_\beta(v_0^{it/2}e_nxe_nv_0^{it/2})\big)_{\beta\in\mathcal F}
\Big\|_{L_p(\mathcal M;\ell_\infty)} \\
&\le C\|v_0^{it/2}e_nxe_nv_0^{it/2}\|_p
\le C\|x\|_p.
\end{aligned}
\end{equation}
\begin{equation}\label{equ:abs-infty}
\begin{aligned}
\|\mathscr T_{m,n}(1+it)x\|_{L_\infty(\mathcal M;\ell_\infty)}
&\le4C\|v_0^{it/2}e_nxe_nv_0^{it/2}\|_\infty \\
&=4C\|e_nxe_n\|_\infty
\le4C\|x\|_\infty.
\end{aligned}
\end{equation}
Stein's interpolation \cite{Stein1956}, together with the interpolation
identity in Lemma~\ref{lem:Lp-l1-interpolation}, gives at
$\theta=1-p/r$
\begin{equation}\label{eq:truncated-center-estimate}
 \|\mathscr T_{m,n}(\theta)x\|_{L_r(\M;\ell_\infty)}
 \le4^\theta C\|x\|_r.
\end{equation}

Letting $m\to\infty$ in \eqref{eq:truncated-center-estimate} gives that
\[
    \big\|{\sup_{\beta\in\mathcal F}}^+v^{-1/2}S_\beta(v^{1/2}e_nxe_nv^{1/2})v^{-1/2}\big\|_{r}\le4^\theta C\|x\|_r,\qquad x\in \M.
\]
For $x\in \mathscr C(v_0)$, there exists some $n\in \N_+$ such that $e_nxe_n=x$, and thus
\[
    \big\|{\sup_{\beta\in\mathcal F}}^+v^{-1/2}S_\beta(v^{1/2}xv^{1/2})v^{-1/2}\big\|_{r}\le4^\theta C\|x\|_r,\qquad x\in \mathscr C(v_0).
\]
The density of $\mathscr C(v_0)\subset L_r(\M)$ in Lemma~\ref{lem:density of M} and the finite-subfamily property give that
\begin{align*}    \big\|{\sup_{\beta\in\BB}}^+v^{-1/2}S_\beta(v^{1/2}xv^{1/2})v^{-1/2}\big\|_{r}\le4^{1-p/r} C\|x\|_r,\qquad x\in  L_r(\M).
\end{align*}
{Then Lemma~\ref{lem:tech}, applied to $\mathbf{w}=v^{-1/2}w^{1/2p}$, gives the case of $p<r$.}

Finally, suppose that $r<p$.  Assumption~\ref{ass:symmetric} gives
\[
 w^{-p'/p}\in W_{p'}((E_\beta)_\beta),
 \qquad
 [w^{-p'/p}]_{W_{p'}}
 \le\gamma_p([w]_{W_p}).
\]
Here $p'<r'$ and
$1/s=1/p'-1/r'$, so the case just proved applies to this dual weight.
It gives the same majorant $v$ and
\[
 |v^{-1/2}w^{-1/2p}|^{2r'}
 \in W_{r'}((E_\beta)_\beta)
\]
with characteristic at most
\(
 4^{1-p'/r'}
 \max\{1,\gamma_p([w]_{W_p})\}.
\)
Applying Assumption~\ref{ass:symmetric} at $r'$ and using measurable
functional calculus give
\begin{equation}\label{eq:dual-factorization-computation}
\begin{aligned}
 \bigl(|v^{-1/2}w^{-1/2p}|^{2r'}\bigr)^{-r/r'}
 &=(w^{-1/2p}v^{-1}w^{-1/2p})^{-r}\\
 &=(w^{1/2p}vw^{1/2p})^r
  =|v^{1/2}w^{1/2p}|^{2r}.
\end{aligned}
\end{equation}
This proves {the case of $r<p$} and exactly the first estimate in
\eqref{eq:factorization-characteristic-control}.
\end{proof}

\subsection{A noncommutative Rubio de Francia extrapolation theorem}
Now we exploit the factorization for the weights to show the extrapolation result for the operators.

\begin{theorem}\label{thm:extra-abstract}
Suppose that Assumption~\ref{ass:symmetric} holds. Let
$(T_\alpha)_{\alpha\in\AA}$ be linear maps from $\M$ to $L_0(\M)$. Suppose, for some $1<r<\infty$ and an increasing function
$\Phi:[0,\infty)\to[0,\infty)$, one has
\begin{equation}\label{eq:abstract-r-hypothesis}
 \big\|{\sup_{\alpha\in\AA}}^+T_\alpha(x)\big\|_{L_r(\M,w)}
 \le
 \Phi\bigl([w]_{W_r}\bigr)
 \|x\|_{L_r(\M,w)}
\end{equation}
for every $x\in \M\cap L_r(\M,w)$ and $w\in W_r((E_\beta)_\beta)$. Then, for every $1<p<\infty$, $w\in W_p((E_\beta)_\beta)$, the map $x\mapsto (T_\alpha x)_\alpha$ extends uniquely to a bounded linear map from $L_p(\M,w)$ to $L_p(\M,w;\ell_\infty(\AA))$, and
\begin{equation}\label{eq:abstract-p-conclusion}
 \big\|{\sup_{\alpha\in\AA}}^+T_\alpha(x)\big\|_{L_p(\M,w)}
 \le
 8\Phi\!\left(
    \Theta_{p,r}\bigl([w]_{W_p}\bigr)
 \right)
 \|x\|_{L_p(\M,w)}.
\end{equation}
\end{theorem}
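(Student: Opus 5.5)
The plan is to treat the two cases $p<r$ and $r<p$ separately; when $p=r$ the hypothesis already gives the claim, since $\Theta_{p,p}(t)\ge t$ and $\Phi$ is increasing. Fix $w\in W_p((E_\beta)_\beta)$ and $x\in\M\cap L_p(\M,w)$. Put $y=w^{1/2p}xw^{1/2p}\in L_p(\M)$, so that $\|y\|_p=\|x\|_{L_p(\M,w)}$, and put $X_\alpha=w^{1/2p}T_\alpha(x)w^{1/2p}$. The goal is $\|{\sup_\alpha}^+X_\alpha\|_p\le 8\Phi(\Theta_{p,r}([w]_{W_p}))\|y\|_p$ for such $x$. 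The unique bounded extension then follows from the density of $\M\cap L_p(\M,w)$ (Lemma~\ref{lem:density of M}) and completeness of $L_p(\M;\ell_\infty(\AA))$. In both cases the factorization Theorem~\ref{thm:factorization-both-sides} turns $w$ into an auxiliary $W_r$-weight. Writing $\mathbf w=v^{\mp1/2}w^{1/2p}$, the isometry of Lemma~\ref{lem:tech} rewrites the $r$-hypothesis \eqref{eq:abstract-r-hypothesis} as
\[
\big\|{\sup_\alpha}^+ v^{\mp1/2}X_\alpha v^{\mp1/2}\big\|_r\le\Phi(\Theta_{p,r}([w]_{W_p}))\,\|v^{\mp1/2}yv^{\mp1/2}\|_r .
\]
The sign is $-$ when $p<r$ and $+$ when $r<p$. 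The bound $\Theta_{p,r}([w]_{W_p})$ on the characteristic comes from \eqref{eq:factorization-characteristic-control} together with the monotonicity of $\Phi$.

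\emph{Case $p<r$}, with $1/s=1/p-1/r$. By Lemma~\ref{lem:maximal-testing}(i), for $\varepsilon>0$ pick a full-support $u\in L_s^+(\M)$ with $\|u^{-1/2}yu^{-1/2}\|_r\|u\|_s\le 2\|y\|_p+\varepsilon$. Theorem~\ref{thm:factorization-both-sides} gives $v\ge u$ with $\|v\|_s\le2\|u\|_s$ and $|v^{-1/2}w^{1/2p}|^{2r}\in W_r$.
\begin{itemize}
\item Lemma~\ref{lem:compare-sandwich} gives $\|v^{-1/2}yv^{-1/2}\|_r\le\|u^{-1/2}yu^{-1/2}\|_r<\infty$. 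In particular $x\in\M\cap L_r(\M,|v^{-1/2}w^{1/2p}|^{2r})$, so the hypothesis applies to $x$.
\item Write $X_\alpha=v^{1/2}(v^{-1/2}X_\alpha v^{-1/2})v^{1/2}$ and use the balanced H\"older Lemma~\ref{lem:vector-holder} with exponents $1/p=1/r+1/s$.
\end{itemize}
This yields
\[
\|{\sup}^+X_\alpha\|_p\le\|v\|_s\,\Phi(\cdot)\,\|u^{-1/2}yu^{-1/2}\|_r\le2\Phi(\cdot)(2\|y\|_p+\varepsilon),
\]
hence the constant $4\le 8$.

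\emph{Case $r<p$}, with $1/s=1/r-1/p$. Use the dual testing formula, the second inequality of Lemma~\ref{lem:maximal-testing}(ii):
\[
\|{\sup}^+X_\alpha\|_p\le4\sup_u\|{\sup}^+u^{1/2}X_\alpha u^{1/2}\|_r/\|u\|_s .
\]
For each full-support $u$, the factorization gives $v\ge u$ with $\|v\|_s\le2\|u\|_s$ and $|v^{1/2}w^{1/2p}|^{2r}\in W_r$. Since $\|v^{1/2}yv^{1/2}\|_r\le\|v\|_s\|y\|_p<\infty$, the element $x$ lies in the relevant weighted $L_r$ space. The hypothesis then puts $(v^{1/2}X_\alpha v^{1/2})_\alpha$ in $L_r(\M;\ell_\infty)$. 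The first part of Lemma~\ref{lem:compare-sandwich} gives
\[
\|{\sup}^+u^{1/2}X_\alpha u^{1/2}\|_r\le\Phi(\cdot)\|v^{1/2}yv^{1/2}\|_r\le2\Phi(\cdot)\|u\|_s\|y\|_p .
\]
With the factor $4$ this gives exactly $8\Phi(\Theta_{p,r}([w]_{W_p}))$.

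The main obstacle is in the case $r<p$. Lemma~\ref{lem:maximal-testing}(ii) is stated for families already known to lie in $L_p(\M;\ell_\infty)$, but a priori $(X_\alpha)$ is only a family in $L_0(\M)$. I would handle this by passing to finite subfamilies via \eqref{eq:prelim-finite-subfamilies}. I would also compress by spectral projections $e_n$ of $w$ (or of an auxiliary full-support element), replacing $X_\alpha$ by $e_nX_\alpha e_n$. The aim is to arrange that the compressed family lies in $L_p$ and that the duality argument in the proof of (ii), which only pairs with finitely supported positive $L_{p'}(\ell_1)$ families, goes through. I would then let $n\to\infty$ by lower semicontinuity of the maximal norm, using Lemma~\ref{lem:vector-holder} with $s=\infty$ to see that compressions do not increase norms. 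If this truncation becomes delicate, the fallback is to rerun the duality computation of Lemma~\ref{lem:maximal-testing}(ii) directly on the pairing $\sum_\alpha\tau(y_\alpha X_\alpha)$, inserting $u^{\mp1/2}$ by hand.
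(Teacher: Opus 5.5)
Your proposal is correct and follows the paper's proof essentially step for step. For $p<r$ you use Lemma~\ref{lem:maximal-testing}(i), the factorization of Theorem~\ref{thm:factorization-both-sides}, monotonicity and the balanced H\"older inequality (constant $4$); for $r<p$ you use the factorization, Lemma~\ref{lem:compare-sandwich} and the dual testing inequality of Lemma~\ref{lem:maximal-testing}(ii) (constant $4\cdot2=8$); and you handle the case $p=r$, which the paper leaves implicit, correctly.

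The a priori membership of $(X_\alpha)_\alpha$ in $L_p(\M;\ell_\infty)$ that you flag for $r<p$ is a fair point: the paper does not address it and simply applies Lemma~\ref{lem:maximal-testing}(ii). If you carry out your truncation, spectral projections of $w$ alone will not put $e_nX_\alpha e_n$ into $L_p(\M)$, since $T_\alpha(x)$ is only in $L_0(\M)$. Instead, compress by projections $e_n\uparrow1$ with $e_nX_\alpha e_n\in\M$ for the finitely many $\alpha$ in question. Then use the testing elements $u=(\sum_\alpha e_ny_\alpha e_n)^{p'/s}+\delta1$ from Lemma~\ref{lem:maximal-testing}(i). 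These commute with $e_n$, so $u^{1/2}e_nX_\alpha e_nu^{1/2}=e_nu^{1/2}X_\alpha u^{1/2}e_n$.
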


As mentioned in Section 1, in Theorem~\ref{thm:extra-abstract}, letting $T_{\alpha}\equiv T$, the resulting maximal norm is exactly $\|T(x)\|_{L_p(\M,w)}$. Then we have the following extrapolation theorem for a single linear operator.
\begin{corollary}
Suppose that Assumption~\ref{ass:symmetric} holds. Let $T$ be a linear map from $\M$ to $L_0(\M)$. If, for some $1<r<\infty$ and increasing $\Phi$, one has
\[
 \|T(x)\|_{L_r(\M,w)}
 \le\Phi([w]_{W_r})\|x\|_{L_r(\M,w)}
\]
for every $w\in W_r((E_\beta)_\beta)$ and $x\in \M\cap L_r(\M,w)$, then $T$ extends uniquely to $L_p(\M,w)$ for every $1<p<\infty$ and $w\in W_p((E_\beta)_\beta)$, with
\[
 \|T(x)\|_{L_p(\M,w)}
 \le8\Phi\!\left(
    \Theta_{p,r}\bigl([w]_{W_p}\bigr)
 \right)\|x\|_{L_p(\M,w)}.
\]
\end{corollary}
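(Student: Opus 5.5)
This is immediate from Theorem~\ref{thm:extra-abstract}: take the index set $\AA$ to be a singleton, or equivalently the constant family $T_\alpha\equiv T$. The only thing to check is that the maximal norm of a single-element (or constant) family is the plain weighted norm, namely
\[
\big\|{\sup_{\alpha}}^+T(x)\big\|_{L_p(\M,w)}=\|T(x)\|_{L_p(\M,w)}.
\]
By definition \eqref{eq:prelim-weighted-vector-space}, this reduces to showing $\|(y)_{\alpha}\|_{L_p(\M;\ell_\infty)}=\|y\|_p$ for $y=w^{1/2p}T(x)w^{1/2p}\in L_p(\M)$.

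The inequality $\ge$ follows from noncommutative H\"older \eqref{eq:prelim-holder}: for any factorization $y=ay_0b$ one has $\|y\|_p\le\|a\|_{2p}\|y_0\|_\infty\|b\|_{2p}$.

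For $\le$, I would write the polar decomposition $y=q|y|$ and use the factorization $y=(q|y|^{1/2})\cdot 1\cdot |y|^{1/2}$. Both outer factors have $L_{2p}$ norm $\|y\|_p^{1/2}$, so the product of the norms is $\|y\|_p$. For a constant family indexed by $\AA$ the same common factorization works with $y_\alpha=1$ for every $\alpha$.

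With the identity in hand, the hypothesis of the corollary is exactly \eqref{eq:abstract-r-hypothesis} for the constant family, and the conclusion \eqref{eq:abstract-p-conclusion} gives the stated bound with $8\Phi(\Theta_{p,r}([w]_{W_p}))$. The extension is a bounded linear map into $L_p(\M,w;\ell_\infty)$ for a constant family, so its single coordinate is a bounded extension of $T$ on $L_p(\M,w)$. Uniqueness follows from the density of $\M\cap L_p(\M,w)$ (Lemma~\ref{lem:density of M}).

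There is no real obstacle. The one point needing care is that the extension of the family consists of a single coordinate, so that it defines an operator on $L_p(\M,w)$. This holds because the coordinate map $(y_\alpha)\mapsto y_{\alpha_0}$ is contractive from $L_p(\M;\ell_\infty)$ to $L_p(\M)$, again by H\"older.
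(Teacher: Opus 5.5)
Your proposal is correct and follows the paper's own route: the paper obtains this corollary by specializing Theorem~\ref{thm:extra-abstract} to the constant family $T_\alpha\equiv T$, using that the maximal norm of a constant family is the plain weighted norm. Your H\"older and polar-decomposition check of $\|(y)_\alpha\|_{L_p(\M;\ell_\infty)}=\|y\|_p$ justifies that identity, which the paper only asserts. Your observation that the coordinate map is contractive, so that the extension gives an operator on $L_p(\M,w)$, is also correct.
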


With the factorization for weights---Theorem \ref{thm:factorization-both-sides}, the H\"older type testing estimate established in Lemma \ref{lem:maximal-testing} yields Theorem~\ref{thm:extra-abstract}.

\begin{proof}[Proof of Theorem~\ref{thm:extra-abstract}]
Fix $w\in W_p((E_\beta)_\beta)$ and
$x\in\M\cap L_p(\M,w)$.

Suppose first that $r<p$, so $1/r=1/p+1/s$.  For an arbitrary positive
invertible $u\in L_s(\M)$, Theorem
\ref{thm:factorization-both-sides} gives $v\ge u$ with
$\|v\|_s\le2\|u\|_s$ and
$|v^{1/2}w^{1/2p}|^{2r}\in W_r((E_\beta)_\beta)$.  The polar
decomposition of $v^{1/2}w^{1/2p}$ and H\"older's inequality give
\[
 \|x\|_{L_r(\M,|v^{1/2}w^{1/2p}|^{2r})}
 =\|v^{1/2}w^{1/2p}xw^{1/2p}v^{1/2}\|_r
 \le\|v\|_s\|x\|_{L_p(\M,w)}<\infty.
\]
That is, $x\in\M\cap L_r(\M,|v^{1/2}w^{1/2p}|^{2r})$. Applying the assumption \eqref{eq:abstract-r-hypothesis}, followed by the monotonicity property---Lemma~\ref{lem:compare-sandwich}, yields
\begin{align*}
 \big\|{\sup_{\alpha\in\AA}}^+
 u^{1/2}w^{1/2p}T_\alpha(x)w^{1/2p}u^{1/2}\big\|_r &\leq \big\|{\sup_{\alpha\in\AA}}^+
 v^{1/2}w^{1/2p}T_\alpha(x)w^{1/2p}v^{1/2}\big\|_r \\
 &=\big\|{\sup_{\alpha\in\AA}}^+
 T_\alpha(x)\big\|_{L_r(\M,|v^{1/2}w^{1/2p}|^{2r})}\\ 
 &\leq\Phi([|v^{1/2}w^{1/2p}|^{2r}]_{W_r})\cdot \| x\|_{L_r(\M,|v^{1/2}w^{1/2p}|^{2r})} \\
 &\leq 2\Phi\!\left(
   \Theta_{p,r}([w]_{W_p})
 \right) \cdot \| u\|_s \cdot \| x\|_{L_p(\M,w)}.
\end{align*}
Divide by $\|u\|_s$, take the supremum over $u$, and use
Lemma~\ref{lem:maximal-testing}(ii).  This gives
\eqref{eq:abstract-p-conclusion}, with the factor $8=4\cdot2$.

Suppose next that $p<r$, so $1/p=1/r+1/s$.  Given $\varepsilon>0$,
Lemma~\ref{lem:maximal-testing}(i), applied to $w^{1/2p}xw^{1/2p}$, gives a positive
invertible $u\in L_s(\M)$ such that
\begin{equation}\label{eq:inverse-test-choice}
 \|u^{-1/2}w^{1/2p}xw^{1/2p}u^{-1/2}\|_r\|u\|_s
 \le2\|x\|_{L_p(\M,w)}+\varepsilon.
\end{equation}
Theorem~\ref{thm:factorization-both-sides} gives $v\ge u$ with
$\|v\|_s\le2\|u\|_s$ and
$|v^{-1/2}w^{1/2p}|^{2r}\in W_r((E_\beta)_\beta)$. Then
 \[
     \begin{aligned}
         \|x\|_{L_r(\M,|v^{-1/2}w^{1/2p}|^{2r})}&=\| v^{-1/2}w^{1/2p}xw^{1/2p}v^{-1/2}\|_r\\ &\leq \| u^{-1/2}w^{1/2p}xw^{1/2p}u^{-1/2}\|_r \leq \frac{1}{\| u\|_s}\cdot (2\| x\|_{L_p(\M,w)}+\varepsilon)<\infty
     \end{aligned}
 \]
 gives that $x\in \M\cap L_r(\M,|v^{-1/2}w^{1/2p}|^{2r})$. Again the H\"older type testing estimate---Lemma~\ref{lem:maximal-testing}(ii) and the assumption \eqref{eq:abstract-r-hypothesis} give
\begin{align*}
 \big\|{\sup_{\alpha\in\AA}}^+T_\alpha(x)\big\|_{L_p(\M,w)}
 &\le \|v\|_s
 \big\|{\sup_{\alpha\in\AA}}^+
 v^{-1/2}w^{1/2p}T_\alpha(x)w^{1/2p}v^{-1/2}\big\|_r\\
 &\leq 2\| u\|_s\cdot \big\|{\sup_{\alpha\in\AA}}^+
 T_\alpha(x)\big\|_{L_r(\M,|v^{-1/2}w^{1/2p}|^{2r})} \\
 &\le2\Phi\big(
   [|v^{-1/2}w^{1/2p}|^{2r}]_{W_r}
 \big)
 \|u\|_s\cdot \| x\|_{L_r(\M,|v^{-1/2}w^{1/2p}|^{2r})}\\
 &\le2\Phi\big(
   \Theta_{p,r}([w]_{W_p})
 \big)\cdot (2\| x\|_{L_p(\M,w)}+\varepsilon).
\end{align*}
Letting $\varepsilon\downarrow0$ gives the sharper factor $4$ in this
case, and hence the stated uniform factor $8$.
\end{proof}

\section{Matrix-weighted extrapolation and applications}\label{sec:extrapolation on N}

In this section, we prove Theorem~\ref{thm:intro-extrapolation} by applying the argument of
Section~4 directly to interval averages on $\NN$.  We first verify the
structural duality assumption and record the changes needed for factorization, then
give the quantitative control and applications.  

\subsection{The factorization structure on \texorpdfstring{$\NN$}{N}}Recall that $\mathcal N=L_\infty(\mathbb R)\bar\otimes{M_d(\mathbb C)}$ is equipped with the tensor trace, and that $(A_{I})_{I\in\mathcal I}$ is the family of averaging operators.
According to the notation in Section \ref{sec:abstract Rubio}, for a matrix weight $w$, we write $w\in W_p((A_I)_I)$ if
\[
 f\longmapsto(A_If)_{I\in\II},
 \qquad f\in \SS_\NN^c,
\]
extends boundedly from $L_p(\NN,w)$ to
$L_p(\NN,w;\ell_\infty)$.  Its operator norm is denoted by $[w]_{W_p((A_I)_I)}$. And by Theorem~\ref{thm:intro-maximal}, we have the quantitative estimate
\begin{equation}\label{equ:quantitative Wp Ap}
[w]_{A_p(\NN)}^{1/p}
 \le [w]_{W_p((A_I)_I)}
 \le \WW_{p,d}[w]_{A_p(\NN)}^{p'/p},
\end{equation}
with $\WW_{p,d}\leq d^{\frac{1}{p}+(1+p')|\frac{1}{p}-\frac{1}{2}|}\mathcal{G}_{p,d^2}$.

\begin{lemma}\label{lem:N-Wp-symmetry}
The class $W_p((A_I)_I)$, where $1<p<\infty$, satisfies
Assumption~\ref{ass:symmetric}.  More precisely, one may take
\begin{equation}\label{eq:N-gamma}
 \gamma_p(t)=\gamma_{p,d}(t)=\WW_{p',d}t^p.
\end{equation}
\end{lemma}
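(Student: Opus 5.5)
The plan is to go through the geometric class $A_p(\NN)$, using the two-sided estimate \eqref{equ:quantitative Wp Ap} together with the exact duality of Proposition~\ref{prop:matrix-Ap-dual}.

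Let $w\in W_p((A_I)_I)$ with $t=[w]_{W_p((A_I)_I)}$. The lower bound in \eqref{equ:quantitative Wp Ap} (the \emph{if} part of Theorem~\ref{thm:intro-maximal}, i.e.\ Lemma~\ref{lem:one-average}) gives $w\in A_p(\NN)$ with $[w]_{A_p(\NN)}\le t^p$. Before invoking this, I will confirm that Lemma~\ref{lem:one-average} only requires boundedness of the maximal map on the dense class $\SS_\NN^c$, so that no a priori $A_p$ assumption is hidden in it. This is true: each coordinate $A_I$ is dominated by the maximal norm, since $\|A_If\|_{L_p(\NN,w)}\le \|{\sup_I}^+A_If\|_{L_p(\NN,w)}$ by the first inequality of Lemma~\ref{lem:positive-max-to-classical} applied to a one-element family, or directly from the factorization definition. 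Also, the duality computation in Lemma~\ref{lem:one-average} holds whether or not $\rho^*_{w,p,I}$ is finite.

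Proposition~\ref{prop:matrix-Ap-dual} then shows that $\sigma:=w^{-p'/p}$ is a matrix weight in $A_{p'}(\NN)$ with
\[
[\sigma]_{A_{p'}(\NN)}=[w]_{A_p(\NN)}^{p'/p}\le t^{p'}.
\]
Next, apply the upper bound of \eqref{equ:quantitative Wp Ap} at exponent $p'$ (the \emph{only if} part of Theorem~\ref{thm:intro-maximal}). This gives $\sigma\in W_{p'}((A_I)_I)$ and
\[
[\sigma]_{W_{p'}}\le \WW_{p',d}[\sigma]_{A_{p'}(\NN)}^{p/p'}=\WW_{p',d}[w]_{A_p(\NN)}\le \WW_{p',d}\,t^p .
\]
The exponent $(p')'/p'=p/p'$ cancels against $p'/p$, which yields exactly $\gamma_{p,d}(t)=\WW_{p',d}t^p$. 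This function is increasing, as Assumption~\ref{ass:symmetric} requires.

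For the converse direction of the equivalence in Assumption~\ref{ass:symmetric}, I run the same chain at $p'$ starting from $\sigma$. This uses $\sigma^{-p/p'}=w$, which holds by pointwise functional calculus since $w(t)>0$ almost everywhere. The chain shows that $\sigma\in W_{p'}$ implies $w\in W_p$.

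The main point needing care is not the algebra but the setting. Assumption~\ref{ass:symmetric} was formulated for trace-finite $\M$ with operator weights in $L_0^+(\M)$, whereas here $w$ and $w^{-p'/p}$ are only matrix weights, and the class $W_p((A_I)_I)$ is defined via $\SS_\NN^c$. I must therefore check that $w^{-p'/p}$ is again a matrix weight, that is, locally integrable. This follows from $w\in A_p(\NN)$ by the remark after Definition~\ref{def:matrix-Ap-global-local}. It guarantees that $\SS_\NN^c\subset L_{p'}(\NN,\sigma)$ is dense by Lemma~\ref{lem:density of N}, so that Theorem~\ref{thm:intro-maximal} applies to $\sigma$ verbatim.
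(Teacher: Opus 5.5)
Your proposal is correct and follows the paper's proof. The chain is the same: the lower comparison in \eqref{equ:quantitative Wp Ap} at $p$, the exact duality of Proposition~\ref{prop:matrix-Ap-dual}, and the upper comparison at $p'$, with the exponents cancelling to give $\WW_{p',d}t^p$ and the converse obtained by running the chain at $p'$. Your added checks (that Lemma~\ref{lem:one-average} needs only the dense class, and that $w^{-p'/p}$ is locally integrable) just make explicit what the paper states briefly.
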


\begin{proof}
Let $w\in W_p((A_I)_I)$.  By definition $w$ is a matrix weight, and
Theorem~\ref{thm:intro-maximal} gives $w\in A_p(\NN)$.  Proposition
\ref{prop:matrix-Ap-dual} then gives
$w^{-p'/p}\in A_{p'}(\NN)$ and the exact identity
\eqref{eq:Ap-dual-characteristic}.  Applying the upper comparison in
Theorem~\ref{thm:intro-maximal} at $p'$, and then its lower comparison
at $p$, yields
\begin{align*}
 [w^{-p'/p}]_{W_{p'}((A_I)_I)}
 &\le \WW_{p',d}
       [w^{-p'/p}]_{A_{p'}(\NN)}^{p/p'}\\
 &=\WW_{p',d}[w]_{A_p(\NN)}\\
 &\le \WW_{p',d}[w]_{W_p((A_I)_I)}^p.
\end{align*}
Geometric duality also ensures local integrability of the inverse power.
The same argument at $p'$ gives the reverse implication.
\end{proof}

\begin{remark}\label{most important remark}
The realization of Assumption~\ref{ass:symmetric} on $\NN$ established above uses the control function
\[
\gamma_{p,d}(t)=\WW_{p',d}t^p,
\]
which depends on the matrix dimension $d$. It is then natural to ask for its sharp dependence with respect to $d$. It is even more ambitious to ask whether $\gamma_{p,d}$ can be taken to be independent of $d$. A positive answer would lead to a truly noncommutative weighted and extrapolation theory. 

\end{remark}

The factorization theorem now transfers to $\NN$. Compared with the proof of Theorem~\ref{thm:factorization-both-sides}, there are two different points in this semifinite case. Firstly, we need to verify that the new functions we get are matrix weights, i.e., we have to check that both $|v^{1/2}w^{1/2p}|^{2r}$ ($r<p$) and $|v^{-1/2}w^{1/2p}|^{2r}$ ($p<r$) are locally integrable. Secondly, thanks to Lemma~\ref{lem:one-average}, we can get a better quantitative estimate in \eqref{eq:factorization-characteristic-control-NN} than by simply combining \eqref{eq:factorization-characteristic-control} and \eqref{equ:quantitative Wp Ap}.

\begin{theorem}\label{thm:N-factorization}
Let $1<p,r<\infty$, $p\ne r$, and define $s>1$ by \(1/s=|1/p-1/r|\). If $w\in A_p(\NN)$ and $u\in L_s^+(\NN)$ is an invertible function, then there is an invertible function $v\in L_s^+(\NN)$ such that
\begin{equation}\label{eq:factorization-majorant-NN}
 u\le v, \qquad \|v\|_s\le2\|u\|_s,
\end{equation}
and
\begin{equation}\label{eq:factorization-output-weights-NN}
\begin{cases}
 |v^{1/2}w^{1/2p}|^{2r}\in A_r(\NN),&1<r<p,\\
 |v^{-1/2}w^{1/2p}|^{2r}\in A_r(\NN),&1<p<r.
\end{cases}
\end{equation}
Quantitatively,
\begin{equation}\label{eq:factorization-characteristic-control-NN}
\begin{aligned}
 [|v^{1/2}w^{1/2p}|^{2r}]_{A_r(\NN)}
 &\le
 (4\WW_{p',d})^{(p-r)/(p-1)}
 [w]_{A_p(\NN)},
 &&1<r<p,\\[0.5em]
 [|v^{-1/2}w^{1/2p}|^{2r}]_{A_r(\NN)}
 &\le
 (4\WW_{p,d})^{r-p}
 [w]_{A_p(\NN)}^{(r-1)/(p-1)},
 &&1<p<r,
\end{aligned}
\end{equation}
where $\WW_{p,d}$ is the constant in \eqref{equ:quantitative Wp Ap}.
\end{theorem}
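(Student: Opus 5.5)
The plan is to rerun the proof of Theorem~\ref{thm:factorization-both-sides} on $\NN$, with the family $(A_I)_{I\in\II}$ in place of $(E_\beta)_\beta$. The endpoint bounds come from \eqref{equ:quantitative Wp Ap}. The improved constants come from interpolating each single averaging operator and then reading off the characteristic via Lemma~\ref{lem:one-average}, rather than passing through $W_r$ and $\gamma$.

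\emph{Case $p<r$.} Set $C=\max\{1,\WW_{p,d}[w]_{A_p(\NN)}^{p'/p}\}$. By \eqref{equ:quantitative Wp Ap}, the transported maps
\[
S_I(x)=w^{1/2p}A_I(w^{-1/2p}xw^{-1/2p})w^{1/2p}
\]
satisfy the maximal bound \eqref{eq:transported-p-bound} on $L_p(\NN)$ with constant $C$. Starting from $h=u^{s/p}$, I run the iteration \eqref{eq:endpoint-majorant} to produce $v_0$ with
\[
h\le v_0,\qquad \|v_0\|_p\le 2\|h\|_p,\qquad S_I(v_0)\le 4Cv_0 .
\]
Then I put $v=v_0^{\theta}$ with $\theta=1-p/r$. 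Löwner--Heinz gives $u\le v$ and $\|v\|_s\le 2\|u\|_s$ exactly as in \eqref{eq:v-from-v0}, and $v$ is invertible a.e.\ because $v\ge u$. The only change is that the truncations $e_m$ are taken as spectral projections of $v_0$ intersected with $\chi_{(-m,m)}$, so that the analytic family \eqref{eq:truncated-analytic-family} acts on $\SS_\NN^c$.

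For the quantitative bound, fix a single interval $I$ and interpolate the operator $x\mapsto v_0^{-z/2}S_I(v_0^{z/2}xv_0^{z/2})v_0^{-z/2}$:
\begin{itemize}
\item on $\Re z=0$, Lemma~\ref{lem:one-average} gives norm $\le [w]_{A_p(\NN)}^{1/p}$ on $L_p$;
\item on $\Re z=1$, the bound $S_I(v_0)\le 4Cv_0$ gives norm $\le 4C$ on $L_\infty$.
\end{itemize}
At $\theta$, the norm of $A_I$ on $L_r(\NN,w_{\rm new})$, with $w_{\rm new}=|v^{-1/2}w^{1/2p}|^{2r}$, is therefore at most $[w]^{(1-\theta)/p}(4C)^{\theta}$. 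This uses the identification in Lemma~\ref{lem:tech} with $\mathbf w=v^{-1/2}w^{1/2p}$, done pointwise. Taking the supremum over $I$ and raising to the power $r$, Lemma~\ref{lem:one-average} gives
\[
[w_{\rm new}]_{A_r(\NN)}\le [w]_{A_p(\NN)}\,(4C)^{r-p}=(4\WW_{p,d})^{r-p}[w]_{A_p(\NN)}^{(r-1)/(p-1)},
\]
using $r(1-\theta)/p=1$ and $r\theta=r-p$.

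\emph{Case $r<p$.} By Proposition~\ref{prop:matrix-Ap-dual}, $w^{-p'/p}\in A_{p'}(\NN)$ with characteristic $[w]_{A_p(\NN)}^{p'/p}$, and $p'<r'$ with the same $s$. The previous case gives $v$ with
\[
[|v^{-1/2}w^{-1/2p}|^{2r'}]_{A_{r'}(\NN)}\le (4\WW_{p',d})^{r'-p'}[w]_{A_p(\NN)}^{\frac{p'}{p}\cdot\frac{r'-1}{p'-1}} .
\]
The computation \eqref{eq:dual-factorization-computation}, done pointwise, identifies the $(-r/r')$-power of this weight with $|v^{1/2}w^{1/2p}|^{2r}$. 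Proposition~\ref{prop:matrix-Ap-dual} then raises the characteristic to the power $r/r'=r-1$. The arithmetic $(r'-p')(r-1)=(p-r)/(p-1)$, with the exponent of $[w]$ collapsing to $1$, yields the first line of \eqref{eq:factorization-characteristic-control-NN}.

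\emph{Main obstacle: local integrability.} The step needing real care is showing that the output is a genuine matrix weight, i.e.\ that $\operatorname{tr}|v^{-1/2}w^{1/2p}|^{2r}$ is locally integrable (the other case then follows by duality). Lemma~\ref{lem:one-average} presupposes this, and it is not automatic because $u^{-1}$ need not be locally bounded. My first attempt is to use the iteration itself. On a fixed interval $I$,
\[
v_0\ge (4C)^{-1}h_1\ge (4C)^{-1}S_I(h)=(4C)^{-1}w^{1/2p}B_Iw^{1/2p},
\]
where $B_I=\langle w^{-1/2p}hw^{-1/2p}\rangle_I$ is a positive definite constant matrix; it is finite since $w\in A_p(\NN)$ makes $w^{-p'/p}$ locally integrable. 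Hence
\[
v^{-1}\lesssim (w^{1/2p}B_Iw^{1/2p})^{-\theta}\quad\text{on } I,
\]
and the operator inequality $v^{-1/2}w^{1/p}v^{-1/2}\lesssim_{B_I}$ (a power of $w$) should bound the $r$-th power by a multiple of $w$ up to constants depending on $\|B_I\|$ and $\|B_I^{-1}\|$. Since $w$ is locally integrable, this would give local integrability. If the noncommutative power comparison is awkward, the fallback is to apply the truncated interpolation estimate to $x=\chi_I1_d$ and let the truncation parameter tend to infinity, using Fatou.
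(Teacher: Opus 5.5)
Your quantitative argument is the paper's: the same iteration with $C$ taken from \eqref{equ:quantitative Wp Ap}, Stein interpolation one interval at a time with $[w]_{A_p(\NN)}^{1/p}$ from Lemma~\ref{lem:one-average} on $\Re z=0$ and $4C$ on $\Re z=1$, reading off $[\,\cdot\,]_{A_r(\NN)}$ through Lemma~\ref{lem:one-average}, and reducing $r<p$ to $p'<r'$ via Proposition~\ref{prop:matrix-Ap-dual} with the same arithmetic.

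The genuine difference is in proving local integrability. The paper tests the interpolated single-average estimate on $f_n=\chi_{J\cap E_n}v^{-1/2}w^{1/p}v^{-1/2}$. This choice makes the argument of $A_J$ collapse to $\chi_{J\cap E_n}1_d$, so the output norm is $\frac{|J\cap E_n|}{|J|}\bigl(\int_J\tr|v^{-1/2}w^{1/2p}|^{2r}\bigr)^{1/r}$. That route reuses the interpolation and its cutoff limits; your fallback is a variant of it.

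Your primary route uses only the first step of the iteration and is fully pointwise. On $I$ you have $v_0\ge(4C)^{-1}w^{1/2p}B_Iw^{1/2p}\ge(4C)^{-1}\lambda_{\min}(B_I)\,w^{1/p}$. Löwner--Heinz and order reversal under inversion give $v^{-1}\le(4C/\lambda_{\min}(B_I))^{\theta}w^{-\theta/p}$. Congruence by $w^{1/2p}$ then gives $w^{1/2p}v^{-1}w^{1/2p}\le(4C/\lambda_{\min}(B_I))^{\theta}w^{1/r}$, hence $\int_I\tr|v^{-1/2}w^{1/2p}|^{2r}\le(4C/\lambda_{\min}(B_I))^{r-p}\int_I\tr w$. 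This is correct, needs no limiting procedure, and gives an explicit local bound, so the fallback is unnecessary.

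Two phrasing corrections are needed.
\begin{enumerate}
\item Make the comparison for $|v^{-1/2}w^{1/2p}|^2=w^{1/2p}v^{-1}w^{1/2p}$, not for $v^{-1/2}w^{1/p}v^{-1/2}$. The latter is only unitarily equivalent to it and is not order-comparable to a power of $w$ in general.
\item Take the $r$-th power only under the trace, using that $0\le X\le Y$ implies $\tr X^r\le\tr Y^r$. The map $t\mapsto t^r$ is not operator monotone for $r>1$.
\end{enumerate}

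For $r<p$ you can also bypass duality. Since $|v^{1/2}w^{1/2p}|^{2r}=(w^{1/2p}vw^{1/2p})^r$, H\"older with $\frac1{2p}+\frac1s+\frac1{2p}=\frac1r$ gives $\chi_Jw^{1/2p}vw^{1/2p}\in L_r(\NN)$ directly.
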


\begin{proof}

    On the one hand, suppose first that $p<r$, and let us verify that $|v^{-1/2}w^{1/2p}|^{2r}$ is locally integrable. Given a bounded interval $J\in\II$, set
    \[
        E_n:=(-n,n)\cap\bigl\{t:\|(v^{-1/2}w^{1/2p})(t)\|_{S_\infty^d}\le n, \|(w^{-1/2p}v^{1/2})(t)\|_{S_\infty^d}\le n \bigr\},
    \]
    and
    \[
        f_n:=\chi_{J\cap E_n}\, v^{-1/2}w^{1/p}v^{-1/2}.
    \]
    Then $f_n$ is bounded and compactly supported, so $f_n\in\SS_\NN^c\subset L_r(\NN)$. According to the proof of Theorem~\ref{thm:factorization-both-sides}, we have the following estimate:
    \[
    \big\|{\sup_{I\in\II}}^+ v^{-1/2}w^{1/2p} A_I(w^{-1/2p}v^{1/2}fv^{1/2}w^{-1/2p}) w^{1/2p}v^{-1/2} \big\|_r\lesssim \|f\|_r, \qquad f\in\SS_\NN^c.
    \]
    In particular, we have
    \begin{equation}\label{equ:single AJ}
    \big\| v^{-1/2}w^{1/2p} A_J(w^{-1/2p}v^{1/2}f_nv^{1/2}w^{-1/2p}) w^{1/2p}v^{-1/2} \big\|_r\lesssim \|f_n\|_r<\infty.
    \end{equation}
    The left hand side of \eqref{equ:single AJ} is exactly
    \[
 \frac{|J\cap E_n|}{|J|} \left( \int_J \tr\bigl(|v^{-1/2}w^{1/2p}|^{2r}(t)\bigr)\,dt \right)^{1/r}
    \]
    Taking $n$ large enough so that $|J\cap E_n|>0$ gives the local integrability. If $r<p$, combining Proposition~\ref{prop:matrix-Ap-dual} and the argument above gives the local integrability as desired.

    On the other hand, we improve the quantitative estimate. For $p<r$, repeat the procedure in the proof of Theorem~\ref{thm:factorization-both-sides} to construct the factorization $v$. Given $I\in \II$, we consider the index set $\mathcal{F}_I=\{ I\}$ of one single element. Lemma~\ref{lem:one-average} gives the estimate on $\Re z=0$:
    \[
    \begin{aligned}
        &\|\mathscr T_{m,n}(it)f\|_{L_p(\NN;\ell_\infty(\mathcal F_I))}\\=&\| e_m v_0^{-it/2} w^{1/2p}A_I(w^{-1/2p}v_0^{it/2}e_nfe_nv_0^{it/2}w^{-1/2p})w^{1/2p}v_0^{-it/2}e_m\|_p\leq [w]_{A_p(\NN)}^{1/p}\|f\|_p.
    \end{aligned}
    \]
    Similar to the proof of \eqref{equ:abs-infty}, we have the estimate on $\Re z=1$:
    \[
    \begin{aligned}
        \|\mathscr T_{m,n}(1+it)f\|_{L_\infty(\NN;\ell_\infty(\mathcal F_I))}\leq 4[w]_{W_p((A_I)_I)}\|f\|_\infty\leq 4\WW_{p,d}[w]_{A_p(\NN)}^{1/(p-1)}\|f\|_\infty,
    \end{aligned}
    \]
    where the last inequality comes from \eqref{equ:quantitative Wp Ap}.
    Interpolation at $\theta=1-p/r$ therefore yields
    \[
    \|\mathscr T_{m,n}(\theta)f\|_r \le [w]_{A_p(\NN)}^{1/r} \big(4\WW_{p,d}[w]_{A_p(\NN)}^{1/(p-1)}\big)^{1-p/r}\|f\|_r.
    \]
    Passing to the cutoff limits as in the proof of Theorem~\ref{thm:factorization-both-sides}, and using Lemma~\ref{lem:tech} and Lemma~\ref{lem:one-average} again, gives
    \[
    \begin{aligned}
    [|v^{-1/2}w^{1/2p}|^{2r}]_{A_r(\NN)} \le [w]_{A_p(\NN)}\big(4\WW_{p,d}[w]_{A_p(\NN)}^{1/(p-1)}\big)^{r-p} = (4\WW_{p,d})^{r-p} [w]_{A_p(\NN)}^{(r-1)/(p-1)}.
    \end{aligned}
    \]
    If $r<p$, then 
    \[
    \begin{aligned}
    [|v^{1/2}w^{1/2p}|^{2r}]_{A_r(\NN)}&= [|v^{-1/2}w^{-1/2p}|^{2r'}]_{A_{r'}(\NN)}^{r/r'}\\
    &\le \left( (4\WW_{p',d})^{r'-p'} [w^{-p'/p}]_{A_{p'}(\NN)}^{(r'-1)/(p'-1)} \right)^{r/r'}\\
    &= (4\WW_{p',d})^{(p-r)/(p-1)} [w]_{A_p(\NN)},
    \end{aligned}
    \]
    where the first and last equalities use \eqref{eq:Ap-dual-characteristic}. This proves \eqref{eq:factorization-characteristic-control-NN}.
\end{proof}

\begin{remark}
    The quantitative estimate \eqref{eq:factorization-characteristic-control-NN} is better than the direct combination of \eqref{eq:factorization-characteristic-control} and \eqref{equ:quantitative Wp Ap}. Indeed, \eqref{eq:factorization-characteristic-control} and \eqref{equ:quantitative Wp Ap} give
    \[
    \begin{aligned}
        [|v^{1/2}w^{1/2p}|^{2r}]_{A_r(\NN)}&\leq 4^{r(r'-p')}\WW^r_{r,d}\WW_{p',d}^{rr'}\WW_{p,d}^{prr'}[w]_{A_p(\NN)}^{prr'/(p-1)},\qquad 1<r<p
    \end{aligned}
    \]
    \[
    \begin{aligned}
        [|v^{-1/2}w^{1/2p}|^{2r}]_{A_r(\NN)}&\leq 4^{r-p}\WW_{r,d}\WW_{p,d}^{r}[w]_{A_p(\NN)}^{r/(p-1)},\qquad 1<p<r.
    \end{aligned}
    \]
    And it is easily verified that $prr'/(p-1)\geq 1$ when $1<r<p$ and $r/(p-1)\geq (r-1)/(p-1)$ when $1<p<r$.
\end{remark}

\subsection{Matrix-weighted extrapolation}

We are now ready to prove Theorem~\ref{thm:intro-extrapolation}.

\begin{proof}[Proof of Theorem~\ref{thm:intro-extrapolation}]
Let $w\in A_p(\NN)$ and $f\in\SS_\NN^c$.
We follow the proof of Theorem~\ref{thm:extra-abstract}, using Theorem~\ref{thm:N-factorization} for factorization and Lemma~\ref{lem:N-testing} for the H\"older type testing estimate. The factorized weights belong to $A_r(\NN)$ and are locally integrable. Hence $f$ belongs to their weighted $L_r$ spaces, and the hypothesis \eqref{eq:matrix-maximal-extrapolation-hypothesis} applies.

Suppose first that $p<r$. By Theorem~\ref{thm:N-factorization}, the weight produced by factorization satisfies
\[
 [|v^{-1/2}w^{1/2p}|^{2r}]_{A_r(\NN)}
 \le
 (4\WW_{p,d})^{r-p}
 [w]_{A_p(\NN)}^{(r-1)/(p-1)}.
\]
Applying \eqref{eq:matrix-maximal-extrapolation-hypothesis}
with this weight and completing the H\"older type testing argument using Lemma~\ref{lem:N-testing}(i), as in the proof of Theorem~\ref{thm:extra-abstract}, gives
\[
 \big\|{\sup_{\alpha\in\AA}}^+T_\alpha(f)
 \big\|_{L_p(\NN,w)}
 \le
 4\Phi\!\left(
   (4\WW_{p,d})^{r-p}
   [w]_{A_p(\NN)}^{(r-1)/(p-1)}
 \right)
 \|f\|_{L_p(\NN,w)}.
\]

Suppose now that $r<p$.
Theorem~\ref{thm:N-factorization} gives
\[
 [|v^{1/2}w^{1/2p}|^{2r}]_{A_r(\NN)}
 \le
 (4\WW_{p',d})^{(p-r)/(p-1)}
 [w]_{A_p(\NN)}.
\]
Applying
\eqref{eq:matrix-maximal-extrapolation-hypothesis}
with this weight, using Lemma~\ref{lem:N-testing}(ii) as in the proof of Theorem~\ref{thm:extra-abstract}, yields
\[
 \big\|{\sup_{\alpha\in\AA}}^+T_\alpha(f)
 \big\|_{L_p(\NN,w)}
 \le
 8\Phi\!\left(
   (4\WW_{p',d})^{(p-r)/(p-1)}
   [w]_{A_p(\NN)}
 \right)
 \|f\|_{L_p(\NN,w)}.
\]

Finally, $\SS_\NN^c$ is dense in $L_p(\NN,w)$ by
Lemma~\ref{lem:density of N}, so the map $f\mapsto(T_\alpha(f))_{\alpha\in\AA}$ extends uniquely to a bounded linear map on $L_p(\NN,w)$.

Thus we complete the proof by choosing the increasing function $\Phi_{p,r,d}$ as
    \begin{equation}\label{eq:matrix-Phi-prime}
    \Phi_{p,r,d}(t)=
    \begin{cases}
    4\Phi\!\left((4\WW_{p,d})^{r-p}t^{(r-1)/(p-1)}\right),
    &1<p\leq r,\\[0.5em]
    8\Phi\!\left((4\WW_{p',d})^{(p-r)/(p-1)}t\right),
    &1<r<p.
    \end{cases}
\end{equation}

\end{proof}

\begin{remark}\label{rem:finite-corner-alternative}
There is an alternative finite-corner proof.  One may work on
\[
 \NN_m=L_\infty(-m,m)\bar\otimes M_d(\C),
\]
extend a local matrix weight to the line by translation and reflection,
apply the trace-finite theorem with constants independent of $m$, and recover
the global norms by central localization.  The standard reflected extension
has a uniform but non-sharp quantitative loss in its $A_p$ characteristic.
The direct argument above is shorter and avoids that additional loss.
\end{remark}

\begin{remark}\label{rem:dimension-dependence}
The dependence on $d$ enters through the constants $\WW_{p,d}$ in the maximal characterization. Neither the factorization nor the direct passage to $\NN$ introduces any further dimensional loss.
\end{remark}

\subsection{Applications}

We apply Theorem~\ref{thm:intro-extrapolation} to the sharp
truncations of Calder\'on--Zygmund operators acting entrywise
on matrix-valued functions. The starting estimate at exponent $2$
follows from the classical Cotlar inequality and existing
matrix-weighted estimates.

Let $T$ be a linear operator bounded on $L_2(\R)$, with
kernel $K: \{(t,s)\in \R^2:t\neq s\}\rightarrow\mathbb C$ satisfying
\begin{equation}\label{eq:application-kernel}
\begin{aligned}
 |K(t,s)|
 &\le \frac{C_K}{|t-s|},\\
 |K(t,s)-K(t',s)|+|K(s,t)-K(s,t')|
 &\le C_K\frac{|t-t'|^\gamma}{|t-s|^{1+\gamma}},
 \qquad |t-t'|\le \frac{|t-s|}{2},
\end{aligned}
\end{equation}
where $C_K>0$ and $0<\gamma<1$. We assume the usual
off-support representation
\[
 Tf(t)=\int_{\R}K(t,s)f(s)\,ds,
 \qquad t\notin\operatorname{supp}f.
\]
For $\varepsilon>0$, define
\begin{equation}\label{eq:application-truncations}
 T_\varepsilon f(t)
 =\int_{|t-s|>\varepsilon}K(t,s)f(s)\,ds,
 \qquad f\in\SS_\NN^c.
\end{equation}

Recall the operator weight on $S_2^d$ from Section~\ref{sec:new Ap}:
\[
 \mathbb W(t)(A)=w(t)^{1/2}Aw(t)^{1/2},
 \qquad
 \mathbb W(t)^{1/2}(A)=w(t)^{1/4}Aw(t)^{1/4}.
\]
At exponent $2$, the identification $S_2^d\simeq\ell_2^{d^2}$ gives
\begin{equation}\label{eq:CZO-L2-identification}
 \|\mathbb W^{1/2}f\|_{L_2(\R;S_2^d)}
 =\|f\|_{L_2(\NN,w)}
\end{equation}
and
\begin{equation}\label{eq:CZO-characteristic-comparison}
 [\mathbb W]_{A_2(\R;M_{d^2}(\C))}
 =[w]_{A_2(\NN)}
\end{equation}
The Christ--Goldberg maximal operator at exponent $2$ is
\[
 M_{\mathbb W}g(t)
 =\sup_{I\ni t}\frac1{|I|}\int_I
 \|\mathbb W(t)^{1/2}\mathbb W(s)^{-1/2}g(s)\|_{S_2^d}\,ds.
\]

\begin{lemma}\label{lem:application-weighted-Cotlar}
Let $w\in A_2(\NN)$. Then
\begin{equation}\label{eq:application-maximal-L2}
 \big\|{\sup_{\varepsilon>0}}^+T_\varepsilon f
       \big\|_{L_2(\NN,w)}
 \le C_{d,\gamma}C_T[w]_{A_2(\NN)}^{5/2}
       \|f\|_{L_2(\NN,w)},\qquad f\in \SS_\NN^c.
\end{equation}
Here $C_T=\|T\|_{L_2(\R)\to L_2(\R)}+C_K$ and $C_{d,\gamma}$ is a constant depending on $d$ and $\gamma$.
\end{lemma}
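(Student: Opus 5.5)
The plan is to reduce \eqref{eq:application-maximal-L2} to a pointwise weighted Cotlar inequality for the Hilbert-valued realization on $S_2^d\simeq\ell_2^{d^2}$. We then close the estimate with Goldberg's maximal bound (Lemma~\ref{lem:quantitative goldberg}) and the known matrix $A_2$ theorem.

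\textbf{Reductions.} Since $T$ has a scalar kernel and acts entrywise, under the identification $S_2^d\simeq\ell_2^{d^2}$ it is $T\otimes\mathrm{Id}$. Hence $T$, and each $T_\varepsilon$, commutes with left multiplication by any fixed linear map on $S_2^d$, in particular with $\mathbb W(t)^{1/2}$ for frozen $t$.

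By the finite-subfamily property \eqref{eq:prelim-finite-subfamilies}, it suffices to treat finitely many $\varepsilon$. Lemma~\ref{lem:positive-max-to-classical} at $p=2$ then gives
\[
\big\|{\sup_{\varepsilon}}^+T_\varepsilon f\big\|_{L_2(\NN,w)}
\le d^{1/2}\Big\|\sup_{\varepsilon>0}\|\mathbb W(\cdot)^{1/2}T_\varepsilon f(\cdot)\|_{S_2^d}\Big\|_{L_2(\R)},
\]
where we used $w^{1/4}Aw^{1/4}=\mathbb W^{1/2}A$. Set $F=\mathbb W^{1/2}f$ and $G=\mathbb W^{1/2}Tf$.

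\textbf{Pointwise Cotlar bound.} The key claim is
\begin{equation*}
\sup_{\varepsilon>0}\|\mathbb W(t)^{1/2}T_\varepsilon f(t)\|_{S_2^d}\le C_\gamma C_T\big(M_{\mathbb W}F(t)+M_{\mathbb W}G(t)\big).
\end{equation*}
To prove it, fix $t$ and $\varepsilon$, and let $B$ be the interval centered at $t$ with radius $\varepsilon/2$. Split $f=f_1+f_2$ with $f_1=f\chi_{(t-\varepsilon,t+\varepsilon)}$. For $t'\in B$ write
\[
\mathbb W(t)^{1/2}T_\varepsilon f(t)=\mathbb W(t)^{1/2}[Tf_2(t)-Tf_2(t')]+\mathbb W(t)^{1/2}Tf(t')-T\big(\mathbb W(t)^{1/2}f_1\big)(t').
\]
We then take the $L^\delta$-average over $t'\in B$ for some $0<\delta<1$ and bound each term.

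\emph{Smoothness term.} Since $\mathbb W(t)^{1/2}$ is constant in $s$, the kernel regularity \eqref{eq:application-kernel} and the usual dyadic annuli decomposition bound it by
\[
C_K\sum_k 2^{-k\gamma}\frac{1}{|2^kB|}\int_{2^kB}\|\mathbb W(t)^{1/2}\mathbb W(s)^{-1/2}F(s)\|\,ds\lesssim_\gamma C_K\,M_{\mathbb W}F(t).
\]

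\emph{Middle term.} Its $\delta$-average is at most its plain average, namely $\frac1{|B|}\int_B\|\mathbb W(t)^{1/2}\mathbb W(t')^{-1/2}G(t')\|\,dt'\le M_{\mathbb W}G(t)$.

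\emph{Local term.} Kolmogorov's inequality with the weak $(1,1)$ bound of $T\otimes\mathrm{Id}$ (constant $\lesssim C_T$, vector-valued Calder\'on--Zygmund theory) gives
\[
\lesssim C_T\frac{1}{|2B|}\int_{2B}\|\mathbb W(t)^{1/2}f(s)\|\,ds\le C_T\, M_{\mathbb W}F(t).
\]

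The constants do not depend on $d$ except through $\ell_2^{d^2}$-valued CZ theory, which is dimension-free.

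\textbf{Closing the estimate.} Taking $L_2(\R)$ norms, Lemma~\ref{lem:quantitative goldberg} at $p=2$ (where $p'/p=1$) and \eqref{eq:CZO-characteristic-comparison} give $\|M_{\mathbb W}F\|_2\le\GG_{2,d^2}[w]_{A_2(\NN)}\|F\|_2$, and likewise $\|M_{\mathbb W}G\|_2\le\GG_{2,d^2}[w]_{A_2(\NN)}\|G\|_2$. For $G$ we invoke the matrix $A_2$ theorem for Calder\'on--Zygmund operators (Nazarov--Petermichl--Treil--Volberg, via convex body domination):
\[
\|G\|_{L_2(\R;\ell_2^{d^2})}=\|\mathbb W^{1/2}Tf\|_2\le C_{d,\gamma}C_T[\mathbb W]_{A_2}^{3/2}\|F\|_2.
\]
Combining this with \eqref{eq:CZO-L2-identification} yields the exponent $1+3/2=5/2$, and the factor $d^{1/2}$ is absorbed into $C_{d,\gamma}$.

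\textbf{Main obstacle.} The delicate step is the pointwise Cotlar bound, specifically making the weight-freezing legitimate. It is needed both to commute $\mathbb W(t)^{1/2}$ past $T$ in the local and tail terms and to convert $\|\mathbb W(t)^{1/2}h(s)\|$ into $\|\mathbb W(t)^{1/2}\mathbb W(s)^{-1/2}(\mathbb W^{1/2}h)(s)\|$. That second conversion is exactly what makes Goldberg's operator $M_{\mathbb W}$ appear rather than an unweighted maximal function.
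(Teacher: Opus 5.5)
Your proof is correct, and apart from one step it is the paper's argument. The following all match the paper exactly:
\begin{itemize}
\item the reduction via Lemma~\ref{lem:positive-max-to-classical} with the factor $\sqrt d$;
\item Goldberg's bound at $p=2$ combined with $[\mathbb W]_{A_2}=[w]_{A_2(\mathcal N)}$;
\item the Nazarov--Petermichl--Treil--Volberg $A_2$ theorem giving $[\mathbb W]_{A_2}^{3/2}$ for $\|\mathbb W^{1/2}Tf\|_2$.
\end{itemize}

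The difference is in how you obtain the frozen-weight Cotlar bound. You re-prove Cotlar's inequality from scratch for $S_2^d$-valued functions. That is, you split at scale $\varepsilon$, use H\"older smoothness of the kernel on the far part, take an $L^\delta$-average over $t'\in B$, and apply Kolmogorov's inequality with the vector-valued weak $(1,1)$ bound for $T\otimes\mathrm{Id}$.

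The paper instead quotes the scalar Cotlar inequality and applies it to the scalar functions $\tr(A^*g(\cdot))$. It then bounds $|\tr(A^*h)|\le\|A\|_{S_2^d}\|h\|_{S_2^d}$ inside the maximal functions and takes the supremum over $\|A\|_{S_2^d}\le 1$. This gives the $S_2^d$-valued Cotlar inequality with the scalar constant, and needs no vector-valued Calder\'on--Zygmund theory. The paper then freezes the weight by applying this to $u\mapsto w(t)^{1/4}f(u)w(t)^{1/4}$. That is the same conversion $\|\mathbb W(t)^{1/2}h(s)\|=\|\mathbb W(t)^{1/2}\mathbb W(s)^{-1/2}(\mathbb W^{1/2}h)(s)\|$ that produces $M_{\mathbb W}$ in your argument.

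The duality trick is shorter and uses the scalar result as a black box. Your route is self-contained and shows directly where each maximal term comes from, at the cost of invoking the vector-valued weak-type estimate. Both routes give a dimension-free Cotlar constant. The $d$-dependence enters only through $\sqrt d$, $\mathcal G_{2,d^2}$ and the NPTV constant.

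One bookkeeping point: your displayed pointwise bound puts $C_T$ in front of $M_{\mathbb W}G$ as well. Combined with $\|G\|_2\lesssim C_T[w]_{A_2(\mathcal N)}^{3/2}\|F\|_2$, this would give a factor $C_T^2$ at the end. Your own term-by-term estimates show that the middle term carries no factor $C_T$. Alternatively, normalize $C_T=1$ by working with $T/C_T$, as the paper does. Either fix restores the linear dependence on $C_T$ claimed in the statement.
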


\begin{proof}
Firstly, we show that
\begin{equation}\label{eq:weighted-Cotlar}
\begin{aligned}
 \big\|{\sup_{\varepsilon>0}}^+T_\varepsilon f
       \big\|_{L_2(\NN,w)}
 \le C'_{d,\gamma}[w]_{A_2(\NN)}
 \bigl(
   \|Tf\|_{L_2(\NN,w)}
   +C_T\|f\|_{L_2(\NN,w)}
 \bigr).
\end{aligned}
\end{equation}
The classical Cotlar inequality
\cite[(1) and Section~3.3, Theorem~5]{MOPV2010},
applied to $T/C_T$, gives
\begin{equation}\label{eq:application-scalar-Cotlar}
 \sup_{\varepsilon>0}|T_\varepsilon f(t)|
 \le C_\gamma M(|Tf|)(t)
       +C_\gamma C_T M(|f|)(t)
\end{equation}
for scalar inputs, where $M$ is the ordinary
Hardy--Littlewood maximal operator.
Now given $g\in \SS_\NN^c$ and $A\in M_d(\C)$, applying \eqref{eq:application-scalar-Cotlar} to the scalar-valued function $t\mapsto \tr(A^* g(t))$, we have
\[
\begin{aligned}
    \sup_{\varepsilon>0}|\tr(A^* T_\varepsilon g(t))|
 &\le C_\gamma M(|\tr(A^*(Tg))|)(t)
       +C_\gamma C_T M(|\tr(A^*g)|)(t) \\
       &\leq C_\gamma \| A\|_{S_2^d}M(\|Tg\|_{S_2^d})(t)
       +C_\gamma C_T \| A\|_{S_2^d}M(\|g\|_{S_2^d})(t).
\end{aligned}
\]
Taking the supremum over $\| A\|_{S_2^d}\leq 1$ gives
\[
\sup_{\varepsilon>0}\| T_\varepsilon g(t)\|_{S_2^d}
       \leq C_\gamma M(\|Tg\|_{S_2^d})(t)
       +C_\gamma C_T M(\|g\|_{S_2^d})(t).
\]
Fix $t\in\R$ and consider $g\colon u\mapsto w(t)^{1/4}f(u)w(t)^{1/4}$; then we have
\begin{equation}\label{eq:application-frozen-Cotlar}
    \sup_{\varepsilon>0}\| \mathbb W^{1/2}(t)T_\varepsilon f(t)\|_{S_2^d}
       \leq C_\gamma M_{\mathbb W}(\mathbb W^{1/2} Tf)(t)
       +C_\gamma C_T M_{\mathbb W}(\mathbb W^{1/2} f)(t).
\end{equation}
Lemma~\ref{lem:quantitative goldberg} gives Goldberg's quantitative maximal inequality
\begin{equation}\label{eq:application-CG-estimate}
 \|M_{\mathbb W}g\|_{L_2(\R)}
 \le \mathcal{G}_{2,d^2}[\mathbb W]_{A_2}
       \|g\|_{L_2(\R;S_2^d)}.
\end{equation}
Moreover, Lemma~\ref{lem:positive-max-to-classical} gives
\begin{equation}\label{eq:application-maximal-comparison}
 \big\|{\sup_{\varepsilon>0}}^+T_\varepsilon f
       \big\|_{L_2(\NN,w)}
 \le \sqrt d\left\|
   \sup_{\varepsilon>0}
   \|\mathbb W^{1/2}T_\varepsilon f\|_{S_2^d}
 \right\|_{L_2(\R)}.
\end{equation}
Thus combining \eqref{eq:CZO-L2-identification},
\eqref{eq:CZO-characteristic-comparison}, and
\eqref{eq:application-frozen-Cotlar}--\eqref{eq:application-maximal-comparison}
proves \eqref{eq:weighted-Cotlar} with constant $C'_{d,\gamma}=\sqrt{d}\cdot \mathcal{G}_{2,d^2}C_\gamma$.
Finally, the matrix $A_2$ theorem
\cite[Theorem~1.1 and Remark~1.3]{NPTV2017},
applied to $\mathbb W$ on $S_2^d$, gives
\begin{equation}\label{eq:application-single-L2}
 \|Tf\|_{L_2(\NN,w)}
 \le C''_{d,\gamma}C_T[w]_{A_2(\NN)}^{3/2}
       \|f\|_{L_2(\NN,w)}.
\end{equation}
Substituting \eqref{eq:application-single-L2} into
\eqref{eq:weighted-Cotlar}, and using
$[w]_{A_2(\NN)}\ge1$, yields
\[
\begin{aligned}
 \big\|{\sup_{\varepsilon>0}}^+T_\varepsilon f
       \big\|_{L_2(\NN,w)}
 &\le C'_{d,\gamma}C_T
 \bigl(C''_{d,\gamma}[w]_{A_2(\NN)}^{5/2}+[w]_{A_2(\NN)}\bigr)
 \|f\|_{L_2(\NN,w)}\\
 &\le C_{d,\gamma}C_T[w]_{A_2(\NN)}^{5/2}
       \|f\|_{L_2(\NN,w)},
\end{aligned}
\]
where $C_{d,\gamma}=C'_{d,\gamma}(C''_{d,\gamma}+1)$. This proves \eqref{eq:application-maximal-L2}.
\end{proof}

\begin{theorem}\label{thm:CZO-maximal}
\label{cor:CZO-matrix-Ap}
Let $T$ satisfy the preceding hypotheses.
For every $1<p<\infty$ and $w\in A_p(\NN)$, the map
\(
 f\mapsto(T_\varepsilon f)_{\varepsilon>0}
\)
extends uniquely from $\SS_\NN^c$ to a bounded map from
$L_p(\NN,w)$ into $L_p(\NN,w;\ell_\infty)$, and
\begin{equation}\label{eq:truncation-extrapolation}
 \big\|{\sup_{\varepsilon>0}}^+T_\varepsilon f
       \big\|_{L_p(\NN,w)}
 \le C_{p,d,\gamma}C_T
 [w]_{A_p(\NN)}^{\frac52\max\{1,\frac1{p-1}\}}
 \|f\|_{L_p(\NN,w)}.
\end{equation}
\end{theorem}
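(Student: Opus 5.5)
The plan is to apply Theorem~\ref{thm:intro-extrapolation} with $r=2$, index set $\AA=(0,\infty)$, and the linear maps $T_\varepsilon$ from \eqref{eq:application-truncations}. The hypothesis \eqref{eq:matrix-maximal-extrapolation-hypothesis} at $r=2$ is exactly Lemma~\ref{lem:application-weighted-Cotlar}, with
\[
\Phi(t)=C_{d,\gamma}C_T\,t^{5/2}.
\]
This $\Phi$ is increasing. First we check that each $T_\varepsilon$ maps $\SS_\NN^c$ into strongly measurable $M_d(\C)$-valued functions. This is clear because $T_\varepsilon$ acts entrywise with a kernel that is bounded on $\{|t-s|>\varepsilon\}$, and $f$ is bounded with compact support, so the integral converges absolutely for every $t$.

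The theorem then yields, for every $1<p<\infty$ and $w\in A_p(\NN)$, the unique bounded extension from $L_p(\NN,w)$ into $L_p(\NN,w;\ell_\infty)$, with constant $\Phi_{2,p,d}([w]_{A_p(\NN)})$ given by \eqref{eq:matrix-Phi-prime}. It remains to read off the exponent in the two cases.

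\emph{Case $1<p\le 2$.} The first line of \eqref{eq:matrix-Phi-prime} with $r=2$ gives
\[
4\Phi\bigl((4\WW_{p,d})^{2-p}t^{1/(p-1)}\bigr)=4C_{d,\gamma}C_T(4\WW_{p,d})^{5(2-p)/2}\,t^{\frac52\cdot\frac1{p-1}}.
\]
Since $\frac1{p-1}\ge1$ here, the exponent is $\frac52\max\{1,\frac1{p-1}\}$.

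\emph{Case $p>2$.} The second line of \eqref{eq:matrix-Phi-prime} gives
\[
8\Phi\bigl((4\WW_{p',d})^{(p-2)/(p-1)}t\bigr)=8C_{d,\gamma}C_T(4\WW_{p',d})^{5(p-2)/(2(p-1))}\,t^{5/2}.
\]
Since $\frac1{p-1}<1$ here, the exponent $\frac52$ again equals $\frac52\max\{1,\frac1{p-1}\}$.

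In both cases we collect the $d$-, $p$- and $\gamma$-dependent factors into $C_{p,d,\gamma}$. This gives \eqref{eq:truncation-extrapolation}.

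The main point needing care is that the extrapolation theorem is stated for linear maps defined on $\SS_\NN^c$ with hypothesis only on that class. Lemma~\ref{lem:application-weighted-Cotlar} is stated for exactly this class, so it matches. Uniqueness of the extension comes from the density in Lemma~\ref{lem:density of N}. No further obstacle is expected, since the heavy lifting (Cotlar, the matrix $A_2$ theorem, and the extrapolation itself) is already established.
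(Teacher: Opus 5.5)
Your proposal is correct and matches the paper's proof: apply Theorem~\ref{thm:intro-extrapolation} with $r=2$ and $\Phi(t)=C_{d,\gamma}C_T t^{5/2}$ supplied by Lemma~\ref{lem:application-weighted-Cotlar}, read off the exponents $\frac{5}{2(p-1)}$ and $\frac52$ from \eqref{eq:matrix-Phi-prime}, and get uniqueness from density. The only differences are cosmetic: you cover $p=2$ through the formula, which costs a harmless factor $4$, whereas the paper cites the lemma directly, and you write $\Phi_{2,p,d}$ where the paper's indexing is $\Phi_{p,2,d}$.
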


\begin{proof}
Lemma~\ref{lem:application-weighted-Cotlar} verifies the
hypothesis of Theorem~\ref{thm:intro-extrapolation}
for the linear family $(T_\varepsilon)_{\varepsilon>0}$,
with starting exponent $r=2$ and control function
\(
 \Phi(t)=C_{d,\gamma}C_Tt^{5/2}.
\)
The revised formula \eqref{eq:matrix-Phi-prime} gives
\[
\begin{aligned}
 \Phi_{p,2,d}(t)
 &=4C_{d,\gamma}C_T
   (4\WW_{p,d})^{\frac52(2-p)}
   t^{\frac{5}{2(p-1)}},
 &&1<p<2,\\
 \Phi_{p,2,d}(t)
 &=8C_{d,\gamma}C_T
   (4\WW_{p',d})^{\frac{5(p-2)}{2(p-1)}}
   t^{5/2},
 &&2<p<\infty.
\end{aligned}
\]
Together with \eqref{eq:application-maximal-L2}
at $p=2$, these estimates prove
\eqref{eq:truncation-extrapolation}.
The density of $\SS_\NN^c$ in $L_p(\NN,w)$ gives
uniqueness of the extension.
\end{proof}

\begin{remark}
The proof of Lemma~\ref{lem:application-weighted-Cotlar}
reduces the weighted $L_2$ problem to classical matrix-weighted
estimates on $S_2^d$ and then uses the finite-dimensional
comparison in Lemma~\ref{lem:positive-max-to-classical}.
This yields the dependence on $d$ of the bound in \eqref{eq:application-maximal-L2}. This raises the natural question whether the bound is sharp, which in turn motivates the question whether Lemma~\ref{lem:application-weighted-Cotlar}
admits a direct proof without reductions.

\end{remark}

\section{Open problems}\label{sec:problem}

We conclude with several questions about extensions of the symmetric
weighted realization and the limitations of the present method.

\subsection{Dimension dependence}
In Lemma~\ref{lem:N-Wp-symmetry}, the symmetry of the weight classes is obtained via
\[
 \gamma_p(t)=\gamma_{p,d}(t)=\WW_{p',d}t^p
\]
with $\WW_{p,d}\leq d^{\frac{1}{p}+(1+p')|\frac{1}{p}-\frac{1}{2}|}\mathcal{G}_{p,d^2}$.
The dependence on the matrix dimension $d$ raises the following questions.

\begin{enumerate}
\item
What is the sharp order of $\gamma_{p,d}$ with respect to $d$? Can we take it to be independent of $d$? A positive answer to the latter question would lead to a truly noncommutative weighted and extrapolation theory. 

\item A related question is the sharp order of the bound having appeared in \eqref{eq:truncation-extrapolation} with respect to $d$. Can it be taken to be independent of $d$?

\item Another related question is the existence of another generalization of classical $A_p$ weights beyond the one introduced by Treil and Volberg \cite{TreilVolberg1997,Volberg1997} such that the resulting bounds of the weighted Hardy-Littlewood maximal operator, the weighted Hilbert transform or Calder\'on-Zygmund operators are independent of $d$.
\end{enumerate}

\subsection{Other questions}

\begin{enumerate}
\item
In the abstract framework of Section \ref{sec:abstract Rubio}, we work with trace-finite von Neumann algebras because our use of Assumption \ref{ass:symmetric} involves \(w\), its inverse \(w^{-1}\), and products formed from them. In the trace-finite setting, given a positive full-support operator $w\in L_0(\M)$, all objects mentioned above belong to $ L_0(\M)$, which provides a common setting for our extrapolation theory.

For the particular semifinite algebra
\(
\mathcal N=L_\infty(\mathbb R)\bar\otimes M_d(\mathbb C),
\)
we instead work with strongly measurable matrix-valued functions. The inverse of a full-support function is defined pointwise almost everywhere and remains strongly measurable, as do the relevant products. Moreover, the geometry of \(\mathbb R\) provides a natural notion of local integrability, which allows us to develop the weighted theory in this setting.

For a general semifinite von Neumann algebra, however, a positive full-support operator \(w\in L_0(\mathcal M)\) need not satisfy \(w^{-1}\in L_0(\mathcal M)\). Nor is there, in general, an underlying geometric structure providing a corresponding notion of local integrability. How to formulate suitable weight classes and extend our extrapolation theory to this setting therefore remains open.

\item
In Section \ref{sec:abstract Rubio}, the factorization proof uses the positivity of the symmetric weighted realization
\(
 x\mapsto w^{1/2p}xw^{1/2p}.
\)
Can a similar extrapolation theorem be proved for
nonsymmetric realizations
\[
 x\longmapsto w^{\theta/p}xw^{(1-\theta)/p},
 \qquad
 0\le\theta\le1,\quad\theta\ne\frac12?
\]
A positive answer would provide an alternative proof of the extrapolation for Treil-Volberg's matrix-weighted theory. 

\item
In the classical setting, Rubio de Francia's extrapolation theorem applies to sublinear operators~\cite{Rubio1984}. This includes maximal operators and square functions, two central objects in harmonic analysis. In the noncommutative setting, however, these objects do not admit the same pointwise sublinear formulation.

Theorem~\ref{thm:intro-extrapolation} has established extrapolation for maximal estimates. It is therefore natural to ask whether an analogous result holds for square-function estimates. A basic complication is that in the noncommutative case, we may have to deal with the column and row square-function inequalities simultaneously:
\[
\begin{aligned}
 \Big\|
 \Big(
   \sum_{n}
   T_n(x)^*T_n(x)
 \Big)^{1/2}
 \Big\|_{L_p(\M,w)}
 &\le C\|x\|_{L_p(\M,w)}\\
 \Big\|
 \Big(
   \sum_{n}
   T_n(x)T_n(x)^*
 \Big)^{1/2}
 \Big\|_{L_p(\M,w)}
 &\le C\|x\|_{L_p(\M,w)}.
\end{aligned}
\]
\end{enumerate}

\bigskip

\noindent\textbf{Acknowledgements.}
The authors would like to thank Marius Junge, Javier Parcet and \'Eric Ricard for some discussions or suggestions in establishing the abstract factorization and extrapolation theorem in Section \ref{sec:abstract Rubio}.
This work was partially supported by the National Natural Science
Foundation of China (grant Nos.~12325105,
W2441002, and W2611005).

\smallskip

\noindent\textbf{AI statement.}
During the preparation of this manuscript, the authors used 
GPT only to assist with English-language
editing, the organization and presentation of the manuscript,
\LaTeX{} formatting, and preliminary checks of mathematical
consistency. All mathematical ideas, results, arguments, and proofs
were developed and independently verified by the authors. The authors
carefully reviewed and edited all AI-assisted suggestions and take full
responsibility for the accuracy, originality, and content of the
manuscript.

\providecommand{\bysame}{\leavevmode\hbox to3em{\hrulefill}\thinspace}
\providecommand{\MR}{\relax\ifhmode\unskip\space\fi MR }
\providecommand{\MRhref}[2]{\href{http://www.ams.org/mathscinet-getitem?mr=#1}{#2}
}
\providecommand{\href}[2]{#2}

\end{document}